\begin{document}

\newtheorem{problem}{Problem}
\newtheorem{theorem}{Theorem}
\newtheorem{lemma}[theorem]{Lemma}
\newtheorem{claim}[theorem]{Claim}
\newtheorem{corollary}[theorem]{Corollary}
\newtheorem{prop}[theorem]{Proposition}
\newtheorem{definition}{Definition}
\newtheorem{question}[theorem]{Question}
\newtheorem{application}[theorem]{Application}
\newtheorem{remark}[theorem]{Remark}

\def\F{{\mathbb F}}
\def\R{{\mathbb R}}


\title[Szemer\'{e}di-Trotter type results in $\F_q$]{Szemer\'{e}di-Trotter type results in arbitrary finite fields}

\author{Ali Mohammadi}

\address{School of Mathematics and Statistics, University of
Sydney, NSW 2006, Australia}
\email{alim@maths.usyd.edu.au}

\pagenumbering{arabic}

\begin{abstract}
Let $q$ be a power of a prime and $\F_q$ the finite field consisting of $q$ elements. We prove explicit upper bounds on the number of incidences between lines and Cartesian products in $\F_q^2$. We also use our results on point-line incidences to give new sum-product type estimates concerning sums of reciprocals.
\end{abstract}

\maketitle

\section{Introduction}
Let $F$ be an arbitrary field. Given a finite set of points $P$ and a finite set of lines $L$ in the plane $F^{2}$, we define the number of incidences between $P$ and $L$ by
\[
 I(P,L) = |\{(p,l)\in P \times L : p\in l\}|.
\]
An elementary argument, which involves an application of the Cauchy-Schwarz inequality, yields the trivial bound
\begin{equation}
\label{eqn:IPLTB}
 I(P, L) \leq \min \{ |P|^{1/2}|L| + |P|, |L|^{1/2}|P| + |L| \}.
\end{equation}
See \cite[Corollary~5.2]{BouKatTao} for a proof of the above inequality. In particular, in the critical case $|P| = |L| = N$, we have $I(P, L) \leq N^{3/2}$.

In the case $F = \mathbb{R}$, Szemer\'{e}di and Trotter~\cite{SzTr} proved the bound
\[
I(P, L) \ll |P|^{2/3}|L|^{2/3} + |P| + |L|.
\]
A construction due to Elekes~\cite{Ele} demonstrates that this bound is sharp up to constants.

Let $p$ be a prime, $\F_q$ the finite field consisting of $q = p^{m}$ elements and $\F_q^* = \F_q\setminus\{0\}$. The primary purpose of this paper is to establish nontrivial upper bounds on $I(P, L)$ for sets of points $P$ and lines $L$ in $\F^{2}_q$. An immediate obstacle in this setting is the presence of nontrivial subfields. Given a subfield $G$ of $\F_q$, let $P = G \times G$ and let $L =\{l_{a, b}: a,b \in G\}$, where
$l_{a, b} = \{(x, y) \in \F_q^2: y = ax + b\}.$ Then $|P| = |G|^2$, $|L| \approx |G|^2$ and $I(P, L) \approx |G|^3$. Therefore, in this example, the bound \eqref{eqn:IPLTB} is optimal up to constants. We deduce that such point sets, as well as their affine transformations, must be avoided in order to ensure a nontrivial incidence bound holds.

Let $X(P) = \{x : (x,y) \in P\}.$ Jones \cite{Jones} proved that there exists an absolute constant $\gamma>0$ such that the bound 
\begin{equation}
\label{eqn:JonesIncidenceBound}
I(P, L) \ll N^{3/2 - 1/12838}
\end{equation}
holds if $P$ satisfies the following two conditions.
\begin{enumerate}[(i)]
\item
For all subfields $G$ and elements $c,d$ in $\F_q$,
\[
|X(P) \cap (cG + d)| \leq \max \{|G|^{1/2}, \gamma N^{2560/6419}\}.
\]
\item
For every subfield $G$ in $\F_q$ with $|G| \geq \gamma N^{2560/6419}$, $X(P)$ intersects strictly fewer than $\max\{|G|^{1/2},\gamma N^{2560/6419}\}/2$ distinct translates $G+d$ of $G$.
\end{enumerate}
Clearly, the second condition is undesirable. We remark that the proof of the above result can be reworked to drop this condition if one restricts to certain structured point sets such as Cartesian products. Indeed this is the approach we consider in this paper, noting that it is significantly more difficult to obtain results of similar strength for general point sets.

For large sets of points and lines, with $|P|, |L| \geq q$, Vinh \cite{Vinh} proved the bound
\begin{equation}
\label{eqn:VinhIPL}
\bigg| I(P, L) - \frac{|P||L|}{q}\bigg| \leq (q|P||L|)^{1/2}.
\end{equation}
Also see \cite{MurPet} for an elementary proof of this result.

In the regime of small sets, Stevens and de Zeeuw \cite{StevZeeuw} proved the following result. Let $A, B \subset F$, with $|B| \leq |A|$ and let $L$ be a set of lines in $F^2$ such that $|L|^{3} \geq |A|^{2}|B|$. If $F$ has positive characteristic $p$, assume $|B||L| \ll p^{2}$. Then 
\[
I(A\times B, L) \ll |A|^{1/2}|B|^{3/4}|L|^{3/4} + |L|.
\]
As outlined in \cite[Example~5]{StevZeeuw}, this bound is optimal for certain sets of points and lines. However, in the setting $F = \F_q$,  if $q$ is a large power of $p$ then the above estimate becomes restricted to very small sets of points and lines. The main ingredient in the proof of the above result is a bound on the number of incidences between points and planes due to Rudnev~\cite{Rud2}. Indeed, \cite{Rud2} has been the driving force behind much of the recent progress on sum-product type problems as well as certain related geometric questions in fields of positive characteristic. A survey on some of such developments is provided in \cite{Rud3}. However, we note that these new techniques appear to be ineffective in dealing with general subsets of $\F_q$.

Prior to the appearance of Rudnev's result on point-plane incidences, sum-product estimates provided the key tool in the study of point-line incidences in finite fields. For sets $A, B \subset \F_q$ we define the sum set $A+B = \{a+b: a\in A, b \in B\}$ and similarly define the difference set $A- B$, the product set $AB$ and the ratio set $A/B$. To avoid dividing by zero, it is assumed throughout the paper that all sets contain strictly more than one element and exclude zero.

Bourgain, Katz and Tao~\cite{BouKatTao} proved the first qualitative sum-product estimate in $\F_p$, which states that for any $\alpha >0$ there exists a $\beta = \beta(\alpha) > 0$ such that if $A\subset \F_p$ and $p^{\alpha} < |A| < p^{1-\alpha}$, then 
\[
\max\{|A+A|, |AA|\} \gg_{\alpha} |A|^{1+\beta}.
\]
As a corollary, they proved that, for any $0 < \epsilon < 2$ there exists a $\delta = \delta(\epsilon) > 0$ such that, given a set of points $P$ and a set of lines $L$ in $\F_p^{2}$, if $|P|, |L| \leq N = p^{\epsilon}$ then
\begin{equation}
\label{eqn:IPLBKT}
I(P, L) \ll N^{3/2 - \delta}.
\end{equation}
This result is based, roughly, on the observation that if there are too many incidences between elements of $P$ and $L$, then one can identify a set $A \subset \F_p$, of cardinality close to $|P|^{1/2}$, such that $|A+A|$ and $|AA|$ are both small. Consequently, through the use of a sum-product estimate, one concludes that $A$ must occupy nearly all of $\F_p$.

In a similar vein, building on the subsequent progress on the sum-product problem, Helfgott and Rudnev~\cite{HelRud} and later Jones~\cite{Jones2} obtained explicit variants of the above incidence result.

In this paper, we use sum-product estimates, as well as related techniques, to establish nontrivial upper bounds on the number of collinear triples $T(A, B)$ formed by a Cartesian product $A\times B$, where $A, B \subset \F_q$. Then, in the same manner as \cite[Corollary~6]{AkMuRuSh}, we use the acquired estimates of $T(A, B)$ to bound $I(A\times B, L)$ given any set of lines $L$ in $\F_q^{2}$. Formally, we define $T(A, B)$ as the number of sextuples $(a_1, a_2, a_3, b_1, b_2, b_3) \in A^{3}\times B^{3}$ satisfying
\begin{equation}
\label{triplesDefn}
(a_2 - a_1)(b_3 - b_1) = (a_3 - a_1)(b_2 - b_1).
\end{equation}
For the sake of simplicity, we write $T(A)$ instead of $T(A, A)$. We observe the trivial upper bound
\begin{equation}
\label{eqn:TriplesTrivialBound}
T(A, B) \leq |A|^{2}|B|^{2}\cdot \min\{|A|, |B|\}.
\end{equation}
Clearly, for all $c_1, c_2 \in \F_q^{*}$ and all $d_1, d_2 \in \F_q$, we have
\[
T(c_1A + d_1, c_2B + d_2) = T(A, B).
\]
Additionally, note that \eqref{eqn:TriplesTrivialBound} becomes an equality if $A = B = G$ for some subfield $G$ of $\F_q$. However, one expects that a nontrivial upper bound on $T(A, B)$ holds as long as either $A$ or $B$ does not correlate with any sets of the form $cG+d$, for subfields $G$ and elements $c,d \in \F_q$. 

Let $T^{*}(A, B)$ denote the number of nontrivial collinear triples of $A\times B$, defined as the number of sextuples $(a_1, a_2, a_3, b_1, b_2, b_3) \in A^{3}\times B^{3}$ satisfying
\begin{equation}
\label{Def:NTTriples}
(a_2 - a_1)(b_3 - b_1) = (a_3 - a_1)(b_2 - b_1)\not= 0.
\end{equation}
Then, assuming $|B|\leq |A|$, it follows that
\begin{equation}
\label{eqn:T*toT}
T(A,B) \approx T^{*}(A, B) + |A|^{3}|B|.
\end{equation}
The term $|A|^{3}|B|$ can be interpreted as the contribution to the number of collinear triples coming from $|B|$ horizontal lines. It is worth noting that if $|B| \ll |A|^{1/2}$ then \eqref{eqn:TriplesTrivialBound} and \eqref{eqn:T*toT} together imply $T(A, B) \approx |A|^{3}|B|$.

We mention that for $A \subset \F_q$ with $|A| \ll p^{2/3}$, Aksoy~Yazici et al.~\cite{AkMuRuSh} proved the bound $T(A) \ll |A|^{9/2}.$ Furthermore, $T(A)$ has been studied extensively by Murphy et al.~\cite{MuPeRoRu} in the context of prime fields.

Define $L(P)$ to be the number of distinct lines determined by pairs of points of $P \subset \F_q^2$. As a further application of our bounds on $T(A, B)$, we prove nontrivial lower bounds on $L(P)$ for Cartesian products $P = A\times B,$ with $A, B \subset \F_q$. See \cite{AkMuRuSh, StevZeeuw} for stronger estimates which hold for sets of cardinality close to the characteristic $p$.

Finally, we use our bounds on point-line incidences to prove explicit lower bounds on the quantities $\max\{|A+A|, |1/A + 1/A|\}$ and $|A + 1/A|$ for sets $A \subset \F_q$ which are not close in size to any proper subfields.

\subsection*{Asymptotic notation}
For positive real numbers $X$ and $Y$, we write $X \ll Y$, $Y \gg X$, $X = O(Y)$ and $Y = \Omega(X)$ to all mean that $X \leq cY$ for some absolute constant $c > 0$. If the constant $c$ depends on a parameter $\epsilon$, we write $X = O_{\epsilon}(Y)$ etc. If $X \ll Y$ and $Y \ll X$, we write $X  = \Theta(Y)$ or $X \approx Y$. We also use $X \lesssim Y$ to denote the existence of an absolute constant $c >0$, such that $X \ll (\log Y)^{c} Y$.

\section{Main results}
\subsection{Incidence bounds}
Our first result is based on some techniques introduced in \cite{HelRud}, which also underlie the incidence result of Jones~\cite{Jones}. However, our approach differs from \cite{Jones} in that we consider only Cartesian products, thereby relaxing the required constraints on the point sets.
\begin{theorem}
\label{thm:UnbalancedIncidences}
Let $A, B \subset \F_q$ with $|B| \leq |A|$ and let $L$ be a set of lines in $\F_q^2$. Suppose that 
\[
|A \cap (cG + d)| \ll \max \big\{|G|^{1/2}, |A|^{31/191}|B|^{129/191}\big\}
\]
for all proper subfields $G$ of $\F_q$ and all elements $c, d \in \F_q$. Then
\begin{equation}
 \label{eqn:TriplesUnbalanced}
T(A, B) \ll  |A|^{383/191}|B|^{571/191} + q^{-1/103}|A|^{207/103}|B|^{3} + |A|^{3}|B|,
\end{equation}
\begin{equation}
\label{eqn:UnbalancedcartesianIncidence1}
I(A \times B, L) \ll \big(|A|^{383/573}|B|^{571/573} + q^{-1/309}|A|^{207/309}|B| + |A||B|^{1/3}\big)|L|^{2/3} + |L|,
\end{equation}
\begin{equation}
\label{eqn:NoOfLinesUnbalanced}
L(A \times B) \gg \min \big\{|A|^{380/191}|B|^{4/191}, q^{2/103}|A|^{204/103}, |B|^{4} \big\}.
\end{equation}
\end{theorem}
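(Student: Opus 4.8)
The three estimates are linked, and the substance is the collinear‑triple bound \eqref{eqn:TriplesUnbalanced}; I would deduce \eqref{eqn:UnbalancedcartesianIncidence1} and \eqref{eqn:NoOfLinesUnbalanced} from it by now‑standard manipulations. For the incidence bound, first note that for any point set $P$ and any set of lines $L$,
\[
 I(P,L)\ll T(P)^{1/3}|L|^{2/3}+|L|,
\]
where $T(P)$ denotes the number of ordered collinear triples of $P$ (so that $T(A\times B)=T(A,B)$): discarding the at most $|L|$ lines carrying one point and applying H\"older's inequality to the remaining lines together with $\sum_{\ell}|\ell\cap P|^{3}\ll T(P)$ — which holds because the distinct ordered collinear triples on a line $\ell$ number $|\ell\cap P|(|\ell\cap P|-1)(|\ell\cap P|-2)$ and $T(P)\gg|P|^{2}$ — gives the claim, exactly as in \cite[Corollary~6]{AkMuRuSh}. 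Taking $P=A\times B$ and inserting \eqref{eqn:TriplesUnbalanced} yields \eqref{eqn:UnbalancedcartesianIncidence1}, since cube‑rooting the right‑hand side of \eqref{eqn:TriplesUnbalanced} reproduces the bracketed expression there term by term (the stray $|A|^{2/3}|B|^{2/3}|L|^{2/3}$ that appears is absorbed into $|A||B|^{1/3}|L|^{2/3}$ because $|B|\leq|A|$). For \eqref{eqn:NoOfLinesUnbalanced}, apply the incidence bound with $L=L(A\times B)$, the set of all lines meeting $P=A\times B$ in at least two points: since every pair of distinct points of $P$ lies on a unique such line, $\sum_{\ell\in L}|\ell\cap P|^{2}\geq|P|(|P|-1)$, so Cauchy--Schwarz and $\sum_{\ell}|\ell\cap P|^{3}\ll T(A,B)$ give $I(P,L)\gg|P|^{4}/T(A,B)$; comparing with the upper bound for this same $L$ forces $|L|\gg|P|^{6}/T(A,B)^{2}$, and substituting \eqref{eqn:TriplesUnbalanced} with $|P|=|A||B|$ produces the three terms in \eqref{eqn:NoOfLinesUnbalanced}.

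It remains to establish \eqref{eqn:TriplesUnbalanced}, which is where the hypothesis on $A$ enters. By \eqref{eqn:T*toT} it suffices to bound $T^{*}(A,B)$ by its first two terms. Writing $A_{s,t}=\{x\in A:\,sx+t\in B\}$, every non‑axis‑parallel line $y=sx+t$ meets $A\times B$ in $|A_{s,t}|$ points, and one checks that $T^{*}(A,B)=\sum_{s\neq 0,\,t}|A_{s,t}|^{3}+O(|A|^{2}|B|^{2})$, the error being absorbed by the first term of \eqref{eqn:TriplesUnbalanced}. Since $sA_{s,t}+t\subseteq B$ while $A_{s,t}\subseteq A$, each rich line furnishes a large subset of $A$ that is simultaneously an affine copy of a subset of $B$. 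I would then run the argument of Helfgott and Rudnev \cite{HelRud} that underlies Jones's theorem \cite{Jones}: dyadically pigeonhole to isolate a popular family of $\approx k$‑rich lines, and from the incidences along such a family extract a single set $A'$, built from translates and dilates of pieces of $A$, whose additive and multiplicative doublings $|A'+A'|$ and $|A'A'|$ are both small — bounded by $|A'|$ times a fixed power of the richness parameters — with $|A'|$ bounded below in terms of those parameters. The Cartesian‑product structure is precisely what makes this extraction go through cleanly and, as remarked in the introduction, removes the need for Jones's second hypothesis on the point set.

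The crux is then to feed $A'$ into a quantitative sum–product estimate valid over an arbitrary finite field $\F_q$, of the type used in \cite{Jones}: a set with small additive and small multiplicative doubling must concentrate on a coset $cG+d$ of a proper subfield $G$, with an explicit relation between the doubling constants and the concentration. Since $A'$ is assembled from affine copies of subsets of $A$, the hypothesis $|A\cap(cG+d)|\ll\max\{|G|^{1/2},|A|^{31/191}|B|^{129/191}\}$ transfers to $A'$ and rules out such concentration unless the richness $k$ — hence $T^{*}(A,B)$ — was already small, which is the desired bound. Within this scheme, one of the counting steps (for instance, estimating the number of rich lines through a fixed point, or a restricted energy of $A$ or $B$) is naturally bounded by the better of a sum–product estimate and a Fourier/spectral estimate in the spirit of Vinh's inequality \eqref{eqn:VinhIPL}: the former yields the main term $|A|^{383/191}|B|^{571/191}$ and the latter the term $q^{-1/103}|A|^{207/103}|B|^{3}$. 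The main obstacle — and the reason for the unpalatable exponents — is exactly this optimization: one must choose the pigeonholing thresholds, the H\"older exponents in the extraction, and the balance against both the subfield threshold $|A|^{31/191}|B|^{129/191}$ and the $q$‑dependent term, all while respecting $|B|\leq|A|$ throughout; the constants $191$ and $103$ are simply what that optimization returns.
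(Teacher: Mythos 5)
Your reduction of \eqref{eqn:UnbalancedcartesianIncidence1} and \eqref{eqn:NoOfLinesUnbalanced} to the collinear-triple bound \eqref{eqn:TriplesUnbalanced} is sound and matches the paper's Lemma~\ref{lem:TtoLandIPL} (the route via $I(P,L)\ll T^{1/3}|L|^{2/3}+|L|$ together with $I(P,L(P))\gg |P|^4/T$, or equivalently the direct H\"older inequality $|P|^2\ll L(P)^{1/3}T(P)^{2/3}$); and the observation that the stray $|A|^{2/3}|B|^{2/3}|L|^{2/3}$ cube-root term is absorbed because $|B|\le|A|$ is correct.

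However, \eqref{eqn:TriplesUnbalanced} itself is not proved: what you give is a programme, not an argument, and it diverges from the actual mechanism in ways that matter. The paper does not extract a single set $A'$ with both $|A'+A'|$ and $|A'A'|$ small and then invoke a sum--product black box. Rather, after the Balog--Szemer\'edi--Gowers step (Claim~\ref{lem:JonesClaim1}) and a covering step (Claim~\ref{app:TCovering}) it produces a set $D$ via Lemma~\ref{lem:BourgainIntersection} and runs a case analysis on the algebraic structure of the quotient set $R(D)$ via Lemma~\ref{lem:QuotientSetSubfield} (Katz--Shen / Roche-Newton--Li style): either $1+R(D)\not\subseteq R(D)$ or $D\cdot R(D)\not\subseteq R(D)$, each giving explicit sumset inequalities and hence a bound on $T$, or $R(D)$ is a subfield, which splits further according to the subfield hypothesis on $A$. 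The exponents $383/191$, $571/191$, etc.\ drop out of chaining Lemma~\ref{lem:PlRu} through \eqref{eqn:TA1+bA2estimate}, \eqref{eqn:Tbsumsetestimates}, \eqref{eqn:TProductofIntersections*}, \eqref{eqn:c*plusc} and \eqref{eqn:DSizeLowerBound}; none of this is recoverable from your outline, and the outline gives no way to reproduce them.

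More concretely wrong is your claim that the term $q^{-1/103}|A|^{207/103}|B|^{3}$ comes from a ``Fourier/spectral estimate in the spirit of Vinh's inequality.'' It does not. That term arises in Case~3.1 of the paper's proof, where $|D|>q^{1/2}$ forces $R(D)=\F_q$ by Lemma~\ref{lem:RBFq}, and then the pigeonhole-type pivoting lemma (Lemma~\ref{lem:pivotting}) gives an element $\xi$ with $q\ll|Y+\xi Y|$; the $q$ is then propagated through the same covering and Pl\"unnecke--Ruzsa chain as in the other cases. No spectral or Fourier machinery enters anywhere in the proof of Theorem~\ref{thm:UnbalancedIncidences}. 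So the sketch is both incomplete on the central estimate and misidentifies the source of one of its terms.
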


In the case $B = A$, we obtain the following improvement of Theorem~\ref{thm:UnbalancedIncidences}.
\begin{theorem}
\label{thm:balancedIncidences}
Let $A\subset \F_q$ and let $L$ be a set of lines in $\F_q^2$. Suppose that 
\begin{equation}
\label{eqn:TriplesRestriction}
|A \cap (cG + d)| \ll \max \big\{|G|^{1/2}, |A|^{51/52}\big\},
\end{equation}
for all proper subfields $G$ of $\F_q$ and all elements $c, d \in \F_q$. Then, we have the estimates
\begin{equation}
 \label{eqn:Triplesbalanced}
T(A) \ll |A|^{5-1/104} + q^{-1/95}|A|^{5+1/95},
\end{equation}
\begin{equation}
\label{eqn:cartesianIncidence1}
I(A \times A, L) \ll \big(|A|^{173/104} + q^{-1/285}|A|^{476/285}\big) |L|^{2/3} + |L|,
\end{equation}
\begin{equation}
\label{eqn:NoOfLines}
L(A \times A) \gg \min\{|A|^{2+1/52}, q^{2/95}|A|^{2-2/95}\}.
\end{equation}
\end{theorem}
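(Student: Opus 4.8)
The plan is to derive all three estimates of Theorem~\ref{thm:balancedIncidences} from the single bound \eqref{eqn:Triplesbalanced} on $T(A)$, which is where the arithmetic content sits. The passage from \eqref{eqn:Triplesbalanced} to \eqref{eqn:cartesianIncidence1} and \eqref{eqn:NoOfLines} is the soft Cauchy--Schwarz/H\"older argument of \cite[Corollary~6]{AkMuRuSh}. Write $t_\ell=|(A\times A)\cap\ell|$. Since two distinct points lie on at most one line, $\sum_{\ell\in L}t_\ell^2\le |A|^4+I(A\times A,L)$, hence $\sum_{\ell\in L}t_\ell^3\ll T(A)+|A|^4+I(A\times A,L)$ (collinear triples with a repeated point contribute $O(|A|^4)$). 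H\"older's inequality gives $I(A\times A,L)\ll|L|^{2/3}\big(T(A)+|A|^4+I(A\times A,L)\big)^{1/3}+|L|$; comparing the two sides and using $|A|^4\le T(A)$ yields $I(A\times A,L)\ll|L|^{2/3}T(A)^{1/3}+|L|$, which becomes \eqref{eqn:cartesianIncidence1} after inserting \eqref{eqn:Triplesbalanced} and extracting cube roots. For \eqref{eqn:NoOfLines}: the $L(A\times A)$ lines spanned by $A\times A$ satisfy $\sum_\ell\binom{t_\ell}{2}=\binom{|A|^2}{2}$, so $\sum_\ell t_\ell^2\gg|A|^4$ while $\sum_\ell t_\ell^3\ll T(A)$; H\"older then gives $|A|^4\ll L(A\times A)^{1/3}T(A)^{2/3}$, i.e.\ $L(A\times A)\gg|A|^{12}/T(A)^2$, and \eqref{eqn:Triplesbalanced} finishes it.

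It remains to prove \eqref{eqn:Triplesbalanced}, and by \eqref{eqn:T*toT} with $B=A$ (the right-hand side of \eqref{eqn:Triplesbalanced} dominating $|A|^4$) it suffices to bound $T^*(A)$. Parametrising a nontrivial collinear triple of $A\times A$ by a ``base'' point $(x,y)$ and the common slope $\lambda=(y_i-y)/(x_i-x)$ seen from it,
\[
T^*(A)=\sum_{x,y\in A}E^{\times}\big((A-x)^{*},(A-y)^{*}\big),
\]
where $U^{*}=U\setminus\{0\}$ and $E^{\times}$ is multiplicative energy. Two applications of Cauchy--Schwarz --- first $E^{\times}(U,V)\le E^{\times}(U)^{1/2}E^{\times}(V)^{1/2}$, then Cauchy--Schwarz over $x$ --- collapse this to
\[
T^{*}(A)\le|A|\sum_{x\in A}E^{\times}\big((A-x)^{*}\big).
\]
This is exactly where being in the balanced case helps: in the setting of Theorem~\ref{thm:UnbalancedIncidences} the analogous step leaves a factor $\sum_{b}E^{\times}((B-b)^{*})$ that must be bounded trivially, nothing being assumed about $B$, whereas here both factors are controlled by \eqref{eqn:TriplesRestriction}. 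So the theorem reduces to
\[
\sum_{x\in A}E^{\times}\big((A-x)^{*}\big)\ll|A|^{4-1/104}+q^{-1/95}|A|^{4+1/95}
\]
under hypothesis \eqref{eqn:TriplesRestriction}.

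To prove this, observe that $\sum_{x}E^{\times}((A-x)^{*})$ counts solutions of $(a_1-x)(a_4-x)=(a_2-x)(a_3-x)$ --- equivalently $a_1a_4-a_2a_3=x(a_1+a_4-a_2-a_3)$ --- with $x,a_1,\dots,a_4\in A$. Solutions with $a_1+a_4=a_2+a_3$ force $\{a_1,a_4\}=\{a_2,a_3\}$ and number $O(|A|^3)$; in every other solution $x$ is determined by $(a_1,a_2,a_3,a_4)$, so a popular-value (dyadic) decomposition reduces the count to a quantitative sum--product / multiplicative-growth estimate over $\F_q$ of the Helfgott--Rudnev--Jones type \cite{HelRud,Jones}. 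Such an input supplies, at each dyadic scale, either the desired power saving, or a structured set $U'$ inside some translate $A-x$ with $|U'|\gg|A|^{1-c\delta}$ and $|U'U'|\ll|A|^{1+c\delta}$; a set with such a small product set in $\F_q$ must essentially be a dilate $cG$ of a proper subfield $G$, whence $|A\cap(cG+x)|\gg|A|^{1-c\delta}$. Choosing the parameters so that $1-c\delta>51/52$ makes this contradict \eqref{eqn:TriplesRestriction}. The only case not covered is when $A$ already correlates with $\F_q$ itself, that is, $|A|$ is not much smaller than $q^{1/2}$ --- precisely the range where the finite-field sum--product estimates lose their power saving; this is what produces the term $q^{-1/95}|A|^{4+1/95}$, consistent with (and for large sets subsumed by) Vinh's estimate \eqref{eqn:VinhIPL}. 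Optimising over the dyadic decomposition produces the explicit exponents $1/104$ and $1/95$.

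Everything in the first two paragraphs is routine (Cauchy--Schwarz, H\"older, the reductions between $T(A)$, incidences and line counts). The crux, and the main obstacle, is the last step: making the sum--product input quantitative and uniform over all proper subfields of $\F_q$, and then tracking the interplay between the power saving, the size threshold $|A|^{51/52}$ permitted by \eqref{eqn:TriplesRestriction}, and the $q$-dependent error --- the bookkeeping that pins down the precise constants $1/104$ and $1/95$.
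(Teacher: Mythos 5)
Your first paragraph (recovering \eqref{eqn:cartesianIncidence1} and \eqref{eqn:NoOfLines} from \eqref{eqn:Triplesbalanced} via H\"older and Cauchy--Schwarz) is correct and matches the paper's Lemma~\ref{lem:TtoLandIPL}. The Cauchy--Schwarz reduction $T^{*}(A)\le|A|\sum_{x\in A}E^{\times}\big((A-x)^{*}\big)$ is also correct --- but that is precisely the reduction used in the paper's Theorem~\ref{Thm:TSumsetBound}, and it is \emph{not} how Theorem~\ref{thm:balancedIncidences} is proved. The two theorems are genuinely distinct: the multiplicative-energy route gives a bound in terms of $|A+A|$ (via Lemma~\ref{lem:energybound}), and only beats the trivial $T(A)\ll|A|^{5}$ when $|A+A|$ is assumed small, which is exactly the hypothesis of Corollary~\ref{cor:SmSSIPL}. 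Theorem~\ref{thm:balancedIncidences} makes no such hypothesis, so this reduction, followed by a bound on $E^{\times}$ alone, cannot produce \eqref{eqn:Triplesbalanced}.

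The decisive gap is in your last paragraph. You claim that, after pigeonholing, you obtain a set $U'\subseteq A-x$ with small product set, and that ``a set with such a small product set in $\F_q$ must essentially be a dilate $cG$ of a proper subfield $G$.'' That is false: geometric progressions (and, more generally, sets with bounded multiplicative doubling generated by a few elements) have tiny product sets without being close to any subfield. To force subfield structure one needs simultaneous additive and multiplicative rigidity, and nothing in your reduction supplies an additive input, since \eqref{eqn:TriplesRestriction} makes no assumption on $|A+A|$. The paper's actual proof takes a different route: after pigeonholing for $b_1,b_2$, a popularity step for $A_{*}$, and a second pigeonhole for $b_0$, it applies Balog--Szemer\'edi--Gowers (Lemma~\ref{lem:BG-BSG}) \emph{additively} to extract $A'$ with $|A'-A'|\ll|A|^{26}/T^{5}$, then pigeonholes for $a_0$ and applies BSG \emph{multiplicatively} to extract $C\subseteq A'-a_{0}$ with $|C/C|\ll|A|^{51}/T^{10}$ and $|C|\gg T^{2}/|A|^{9}$. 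Crucially, $C$ inherits a small difference set from $A'$ as well, so $C$ has both $|C-C|$ and $|C/C|$ controlled by $T$. The sum-ratio estimate Lemma~\ref{lem:sum-ratio} then says no set avoiding subfield correlations can have both small, which after a short computation yields \eqref{eqn:Triplesbalanced}; the condition \eqref{eqn:TriplesRestriction} is what justifies invoking that lemma. Your outline never produces a set with small additive doubling, so the sum--product machinery you want to call on has nothing to act on.
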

To compare the above result with \eqref{eqn:JonesIncidenceBound}, let $P =A \times A$ for a set $A \subset \F_q$, which satisfies restriction \eqref{eqn:TriplesRestriction}. Then given any set of lines $L$ in $\F_q^{2}$, if $|L| = |P| = N$, by \eqref{eqn:cartesianIncidence1} we have
\[
I(P, L) \ll N^{3/2 - 1/624} + q^{-1/285}N^{3/2 + 1/570}.
\]
This also improves on \eqref{eqn:VinhIPL} in the range $N < q^{1+ 1/311}$. Given any sets $A, B \subset \F_q$, with $|B| \leq |A|$, by the Cauchy-Schwarz inequality we have
\begin{equation}
\label{eqn:TCS}
T(A, B) \ll T(A)^{1/2}T(B)^{1/2} + |A|^{3}|B|.
\end{equation}
Suppose that the sets $A$ and $B$ both satisfy condition \eqref{eqn:TriplesRestriction} and $|A| \ll q^{1/2}$. Then by Theorem~\ref{thm:balancedIncidences}, together with inequality \eqref{eqn:TCS}, we obtain
\begin{equation}
\label{eqn:TriplesViaCS}
T(A, B) \ll |A|^{519/208}|B|^{519/208} + |A|^{3}|B|.
\end{equation}
It follows that estimate \eqref{eqn:TriplesUnbalanced} is stronger than \eqref{eqn:TriplesViaCS} in the range
\[
|A|^{1/2} < |B| \ll |A|^{1-c},
\]
where $c = 174/19639 < 1/112$.

Our next result can be used to obtain nontrivial upper bounds on $T(A)$, for sets $A\subset \F_q$, if either $|A+A|$ or $|A-A|$ is small.
\begin{theorem}
\label{Thm:TSumsetBound}
Let $A \subset \F_q$. Suppose that there exists some $\delta >0$ such that
\begin{equation}
\label{eqn:TSmallSumsetRestriction}
|A \cap (cG + d)| \ll \max\{|G|^{1/2}, |A|^{1-\delta}\}
\end{equation}
for all proper subfields $G$ of $\F_q$ and all elements $c, d \in \F_q$. Then
\begin{equation}
\label{eqn:SumsetToTriples}
T(A) \ll \log|A| \cdot \max\{|A + A|^{7/4}|A|^{3}, |A + A|^{6/5}|A|^{18/5}, |A|^{5-\delta/2}, |A + A|^{7/4}|A|^{7/2}q^{-1/4} \}.
\end{equation}
The same estimate holds with the sum set $A + A$ replaced by the difference set $A -A$. 
\end{theorem}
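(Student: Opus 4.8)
The plan is to argue by contradiction: I will show that a large value of $T(A)$ forces a subset of $A$ to have small product set, which, combined with the smallness of $|A+A|$ and with hypothesis \eqref{eqn:TSmallSumsetRestriction}, is untenable.

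First I would pass from $T(A)$ to the count $T^*(A)$ of nontrivial collinear triples via \eqref{eqn:T*toT} with $B=A$, noting that the term $|A|^{5-\delta/2}$ in \eqref{eqn:SumsetToTriples} already dominates $|A|^{4}$. Writing $T^*(A)$ as, up to an $O(|A|^{4})$ error, $\sum_{\ell} r_\ell^{3}$ over lines $\ell$ that are neither horizontal nor vertical, where $r_\ell = |\ell \cap (A\times A)| \le |A|$, a dyadic pigeonholing over the $O(\log|A|)$ possible scales of $r_\ell$ gives a scale $\tau$ with
\[
T^*(A) \ll \log|A| \cdot \tau^{3} L_\tau,
\]
$L_\tau$ being the number of lines meeting $A\times A$ in between $\tau$ and $2\tau$ points; this is the origin of the $\log|A|$ in \eqref{eqn:SumsetToTriples}. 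A line $\{y = sx + c\}$ with $r_\ell \approx \tau$ records a value $c$ attained $\approx \tau$ times by the map $(a,a') \mapsto a' - sa$ on $A \times A$, so having $L_\tau$ such lines is a strong additive--multiplicative self-correlation of $A\times A$. Feeding this configuration into the incidence machinery of Helfgott--Rudnev~\cite{HelRud} underlying Theorem~\ref{thm:balancedIncidences}, one extracts a subset $A' \subseteq A$ whose size is a suitable power of $\tau^{3}L_\tau$ (hence of $T^*(A)$) and for which $|A'A'|$ is small relative to $|A'|$; since $A' \subseteq A$ we also have $|A' + A'| \le |A+A|$ for free.

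The second stage is a sum--product/subfield dichotomy for $A'$ over $\F_q$: a set with both $|A'+A'|$ and $|A'A'|$ small must, after an affine change of variables, concentrate inside a proper subfield $G$ of $\F_q$ of controlled size, the quantitative form of this being proved through Rudnev's point--plane incidence bound; the ``main term'' of that bound is what introduces the factor $q^{-1/4}$ in \eqref{eqn:SumsetToTriples}, becoming relevant once $|A|$ exceeds roughly $q^{1/2}$. As $A' \subseteq A$, this forces a large portion of $A$ into some $cG+d$, and hypothesis \eqref{eqn:TSmallSumsetRestriction} caps that portion by $\max\{|G|^{1/2}, |A|^{1-\delta}\}$. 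Playing the two branches of this maximum against the lower bound on $|A'|$ and optimising the resulting inequalities produces the four terms on the right of \eqref{eqn:SumsetToTriples}, with the branch $|A|^{1-\delta}$ responsible for the $|A|^{5-\delta/2}$ term. The difference-set version follows verbatim: the same extraction controls $|A'-A'| \le |A-A|$, and the sum--product/subfield input does not distinguish $A'+A'$ from $A'-A'$.

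The crux is the middle stage --- making the passage from ``many rich lines'' to ``a subset of $A$ with small product set'' quantitative with a sharp exponent, and simultaneously running the sum--product/subfield estimate over an \emph{arbitrary} $\F_q$ with the correct $q$-dependence, so that the structured exponents $7/4,6/5,7/4$ and the power $q^{-1/4}$ all come out as stated. By comparison, the reduction to $T^*(A)$, the dyadic decomposition, and the containments $|A' \pm A'| \le |A \pm A|$ are routine.
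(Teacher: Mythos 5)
You take a genuinely different route from the paper, and the middle stage of your plan has a concrete error. The paper's proof of Theorem~\ref{Thm:TSumsetBound} is two lines: the identity
\[
T(A) = \sum_{a,b \in A} E_{\times}(A + a, A + b),
\]
which follows directly from the definitions \eqref{triplesDefn} and \eqref{eqn:MEnergyDefn} after translating the collinearity equation, then Cauchy--Schwarz via \eqref{eqn:MEECS} to reduce to $T(A) \le |A|^2 \max_{a\in A} E_{\times}(A+a)$, and finally a direct invocation of Lemma~\ref{lem:energybound}. Since $|A+a| = |A|$ and $(A+a)\cap cG$ has the same size as $A\cap(cG-a)$, the hypothesis \eqref{eqn:TSmallSumsetRestriction} transfers verbatim to each translate $A+a$, and all four terms in \eqref{eqn:SumsetToTriples}, including the $\log|A|$ factor and the $q^{-1/4}$ term, are just the terms of \eqref{eqn:MEnergyBound} multiplied by $|A|^2$. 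Your proposal misses this identity entirely and instead follows the rich-lines/BSG-extraction template that the paper uses for Theorems~\ref{thm:UnbalancedIncidences} and~\ref{thm:balancedIncidences}; it is not clear that this would reproduce the exact exponents $7/4$, $6/5$, $7/4$ without in effect re-deriving Lemma~\ref{lem:energybound}.

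The decisive problem, however, is your second stage: you attribute the quantitative sum--product/subfield dichotomy, and in particular the $q^{-1/4}$ term, to Rudnev's point--plane incidence bound. That bound, and the Stevens--de Zeeuw incidence machinery built on it, requires the sets involved to be small relative to the \emph{characteristic} $p$, not the field size $q$, and therefore cannot be applied in arbitrary $\F_q$ when $q$ is a high power of $p$. The introduction of the paper makes exactly this point: these techniques ``appear to be ineffective in dealing with general subsets of $\F_q$.'' The $q^{-1/4}$ in \eqref{eqn:SumsetToTriples} has nothing to do with Rudnev's theorem; it comes from the elementary regime $|A| > q^{1/2}$ inside the sum--ratio analysis (Lemma~\ref{lem:sum-ratio} and its Case~5.1 analogue in the proof of Lemma~\ref{lem:energybound}), where Lemma~\ref{lem:RBFq} forces $R(\cdot)=\F_q$ and Lemma~\ref{lem:pivotting} then produces a set of size $\gg q$. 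Replacing this elementary subfield dichotomy with Rudnev's theorem would silently restrict the result to sets far below $p^{2/3}$ or so, which is a much weaker statement than what is claimed.
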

In particular, assuming $|A+A| \approx|A|$, we obtain significant improvements over the estimates of Theorem~\ref{thm:balancedIncidences}.
\begin{corollary}
\label{cor:SmSSIPL}
Let $A \subset \F_q$. Suppose that $|A+A| \approx |A|$ and 
\begin{equation}
|A\cap (cG + d)| \ll \max\{|G|^{1/2}, |A|^{3/5}\}
\end{equation}
for all elements $c, d\in \F_q$ and proper subfields $G$. Then, for any set of lines $L$ in $\F_q^{2}$, we have the estimates
\begin{equation}
\label{eqn:CTIntervals}
T(A) \ll \log|A| \cdot (|A|^{5-1/5} + q^{-1/4}|A|^{5+1/4}),
\end{equation}
\begin{equation}
\label{eqn:IncidencesIntervals}
I(A\times A, L) \ll \log|A|^{1/3}\cdot (|A|^{24/15} + q^{-1/12}|A|^{21/12})|L|^{2/3} + |L|,
\end{equation}
\begin{equation}
\label{eqn:LinesIntervals}
L(A\times A) \gg \log|A|^{-2}\cdot \min\{|A|^{2+2/5}, q^{1/2}|A|^{2-1/2}\}.
\end{equation}
\end{corollary}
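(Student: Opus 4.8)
The plan is to obtain all three estimates by specialising Theorem~\ref{Thm:TSumsetBound} to $\delta = 2/5$ and then running the two standard passages — from a collinear-triple bound to an incidence bound, and from a collinear-triple bound to a line-count bound.

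\emph{The bound \eqref{eqn:CTIntervals}.} The hypothesis $|A\cap(cG+d)|\ll\max\{|G|^{1/2},|A|^{3/5}\}$ is precisely condition \eqref{eqn:TSmallSumsetRestriction} with $\delta=2/5$, so Theorem~\ref{Thm:TSumsetBound} applies directly, giving for $S\in\{A+A,\,A-A\}$
\[
T(A)\ll\log|A|\cdot\max\{\,|S|^{7/4}|A|^{3},\ |S|^{6/5}|A|^{18/5},\ |A|^{5-1/5},\ |S|^{7/4}|A|^{7/2}q^{-1/4}\,\}.
\]
Taking $S=A+A$ and using $|A+A|\approx|A|$, the four terms become $|A|^{19/4}$, $|A|^{24/5}$, $|A|^{24/5}$ and $q^{-1/4}|A|^{21/4}$; since $19/4<24/5=5-1/5$ and $21/4=5+1/4$, the maximum is $\ll|A|^{5-1/5}+q^{-1/4}|A|^{5+1/4}$, which is \eqref{eqn:CTIntervals}.

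\emph{The bound \eqref{eqn:IncidencesIntervals}.} I would reuse the reduction from collinear triples to incidences employed in the proof of Theorem~\ref{thm:balancedIncidences}, i.e.\ the argument of \cite[Corollary~6]{AkMuRuSh}. Writing $P=A\times A$ and $x_\ell=|P\cap\ell|$, Hölder's inequality gives $I(P,L)=\sum_{\ell\in L}x_\ell\le\big(\sum_{\ell\in L}x_\ell^{3}\big)^{1/3}|L|^{2/3}$. The quantity $\sum_{\ell\in L}x_\ell^{3}$ counts ordered triples of points of $P$ weighted by the number of lines of $L$ through all three; a triple whose three points are not all equal lies on at most one line and so contributes at most once, while the diagonal triples contribute $\sum_{p\in P}d_p(L)=I(P,L)$, where $d_p(L)$ denotes the number of lines of $L$ through $p$. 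Hence $\sum_{\ell\in L}x_\ell^{3}\le T(A)+I(P,L)$, and solving $I(P,L)^{3}\ll(T(A)+I(P,L))|L|^{2}$ yields $I(P,L)\ll T(A)^{1/3}|L|^{2/3}+|L|$. Substituting \eqref{eqn:CTIntervals}, using $(X+Y)^{1/3}\approx X^{1/3}+Y^{1/3}$ and noting $(24/5)/3=24/15$ and $(21/4)/3=21/12$, produces \eqref{eqn:IncidencesIntervals}.

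\emph{The bound \eqref{eqn:LinesIntervals}.} Let $L^{*}$ be the set of lines meeting $P=A\times A$ in at least two points, so that $L(A\times A)=|L^{*}|$. Every ordered pair of distinct points of $P$ lies on exactly one line of $L^{*}$, hence $\sum_{\ell\in L^{*}}x_\ell(x_\ell-1)=|P|(|P|-1)$ and thus $\sum_{\ell\in L^{*}}x_\ell^{2}\gg|P|^{2}$. Hölder's inequality in the form $\sum_{\ell\in L^{*}}x_\ell^{2}\le\big(\sum_{\ell\in L^{*}}x_\ell^{3}\big)^{2/3}|L^{*}|^{1/3}$ then gives $|L^{*}|\gg|P|^{6}/\big(\sum_{\ell\in L^{*}}x_\ell^{3}\big)^{2}$. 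As above, $\sum_{\ell\in L^{*}}x_\ell^{3}\le T(A)+\sum_{p\in P}d_p(L^{*})$; since each line of $L^{*}$ through a point $p$ contains a further point of $P$, one has $d_p(L^{*})\le|P|-1$, so $\sum_{p\in P}d_p(L^{*})\le|P|^{2}=|A|^{4}\le T(A)$ (the horizontal lines alone account for $|A|^{4}$ collinear triples). Therefore $L(A\times A)\gg|A|^{12}/T(A)^{2}$, and inserting \eqref{eqn:CTIntervals} — now via $(X+Y)^{2}\approx X^{2}+Y^{2}$, with $12-48/5=12/5$ and $12-21/2=3/2$ — yields \eqref{eqn:LinesIntervals}.

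\emph{Main obstacle.} There is no genuine difficulty beyond bookkeeping: once Theorem~\ref{Thm:TSumsetBound} is available, the corollary amounts to the single choice $\delta=2/5$ together with the two routine conversions above. The only points requiring attention are checking that the term $|A+A|^{7/4}|A|^{3}$ is dominated once $|A+A|\approx|A|$, and tracking the logarithmic factors — which enter $T(A)$ linearly and therefore appear as a cube root in \eqref{eqn:IncidencesIntervals} and as a square in the denominator of \eqref{eqn:LinesIntervals} — so that the displayed powers of $\log|A|$ come out exactly as stated.
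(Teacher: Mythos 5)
Your proposal is correct and matches the paper's route: both specialise Theorem~\ref{Thm:TSumsetBound} at $\delta=2/5$, feed $|A+A|\approx|A|$ into \eqref{eqn:SumsetToTriples} to get \eqref{eqn:CTIntervals}, and then convert via the triples-to-incidences and triples-to-lines inequalities. The only cosmetic difference is that you re-derive those two conversions from scratch, whereas the paper simply invokes \eqref{eqn:TriplesToIncidence} and \eqref{eqn:TriplestoP} from Lemma~\ref{lem:TtoLandIPL}; the exponent bookkeeping ($19/4<24/5$, $24/15$, $21/12$, $12/5$, $3/2$) and the handling of the logarithmic factors are all exactly as in the paper.
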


\subsection{Applications}
Based on Theorem~\ref{thm:balancedIncidences}, we obtain the following result which provides an explicit variant of \cite[Theorem 4]{Bou} for subsets of $\F_q$. Also see \cite{BouGar2} for sharp bounds on sums of reciprocals of intervals in $\F_p$.
\begin{corollary}
\label{cor:SR}
Let $A, B \subset \F_q$. Suppose that
\begin{equation}
\label{eqn:ERCon}
|(A+B)^{-1} \cap (cG+d)| \ll \max\{|G|^{1/2}, |A+B|^{51/52}\},
\end{equation}
for all proper subfields $G$ and elements $c, d$ in $\F_q$. Then
\begin{equation}
\label{eqn:ERABFq}
E_{+}(1/A, 1/B) \ll \Big(|A + B|^{173/104} + q^{-1/285}|A+B|^{476/285}\Big) |B|^{4/3}.
\end{equation}
Consequently, if condition \eqref{eqn:ERCon} holds with $B=A$, then
\begin{equation}
\label{eqn:SRvsSS}
\max\{|A+A|, |1/A + 1/A|\} \gg \min\{|A|^{1+1/831}, q^{1/761}|A|^{1-1/761}\}.
\end{equation}
If condition \eqref{eqn:ERCon} holds with $B = A^{-1}$, then
\begin{equation}
\label{eqn:SRExpander}
|A + 1/A| \gg \min\{|A|^{1+1/831}, q^{1/761}|A|^{1-1/761}\}.
\end{equation}
Alternatively, estimates \eqref{eqn:SRvsSS} and \eqref{eqn:SRExpander} hold if the cardinality of $A$ does not lie in the intervals  $(|G|^{1/2 - 1/1664}, |G|^{1+1/51})$ for all proper subfields $G$ of $\F_q$.
\end{corollary}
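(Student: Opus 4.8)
The plan is to derive the energy bound \eqref{eqn:ERABFq} from the incidence estimate \eqref{eqn:cartesianIncidence1} of Theorem~\ref{thm:balancedIncidences} applied to the Cartesian product $S\times S$ with $S=(A+B)^{-1}$, and then to read off \eqref{eqn:SRvsSS} and \eqref{eqn:SRExpander} by Cauchy--Schwarz. Recall that $E_{+}(1/A,1/B)$ counts the solutions of $1/a_1+1/b_1=1/a_2+1/b_2$ with $a_i\in A$, $b_i\in B$. The key point is to encode this as a point--line incidence for $S\times S$: putting $Z_i=1/(a_i+b_i)$ and eliminating $a_i=1/Z_i-b_i$ gives $1/a_i+1/b_i=1/(b_i-b_i^{2}Z_i)$ (the denominator is nonzero since $0\notin A$), so the energy equation is equivalent to $b_1-b_1^{2}Z_1=b_2-b_2^{2}Z_2$, that is, the point $(Z_1,Z_2)$ lies on the line
\[
\ell_{b_1,b_2}\colon\qquad b_1^{2}X-b_2^{2}Y+(b_2-b_1)=0 .
\]
Thus every solution with $a_i+b_i\neq 0$ produces a point $(1/(a_1+b_1),1/(a_2+b_2))\in S\times S$ incident to a line of the family $L=\{\ell_{b_1,b_2}:b_1,b_2\in B\}$, and $|L|\leq|B|^{2}$.

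First I would peel off the degenerate solutions. If $b_1=b_2$ the equation forces $a_1=a_2$, giving at most $|A||B|$ solutions; if $a_1+b_1=0$ then the common value $1/a_i+1/b_i$ equals $0$, forcing $a_2+b_2=0$ as well, so there are at most $|A\cap(-B)|^{2}\leq|B|^{2}$ such solutions. For every remaining solution the correspondence above is $O(1)$-to-one: the line $\ell_{b_1,b_2}$ determines the ratios $b_1^{2}:b_2^{2}:(b_2-b_1)$, and since $b_1\neq b_2$ these pin down $(b_1,b_2)$ up to two choices (from $b_1^2/b_2^2$ we get $b_1=\mu b_2$ with two choices of $\mu\neq1$, and then $(b_2-b_1)/b_2^2$ determines $b_2$), after which the incident point recovers $a_i=1/Z_i-b_i$; in particular $(b_1,b_2)\mapsto\ell_{b_1,b_2}$ is itself $O(1)$-to-one off the diagonal. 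Hence $E_{+}(1/A,1/B)\ll I(S\times S,L)+|A||B|+|B|^{2}$. Since $|S|$ equals $|A+B|$ up to an additive constant, hypothesis \eqref{eqn:ERCon} is exactly condition \eqref{eqn:TriplesRestriction} for $S$, so \eqref{eqn:cartesianIncidence1} applies with $|L|\leq|B|^{2}$ and gives
\[
E_{+}(1/A,1/B)\ll\big(|A+B|^{173/104}+q^{-1/285}|A+B|^{476/285}\big)|B|^{4/3}+|A||B|+|B|^{2}.
\]
Finally $|A||B|+|B|^{2}$ is absorbed into the first term because $|A+B|\geq\max\{|A|,|B|\}$ and $173/104>1$, which yields \eqref{eqn:ERABFq}.

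For the consequences I would use the standard inequality $E_{+}(1/A,1/B)\geq|A|^{2}|B|^{2}/|1/A+1/B|$. Taking $B=A$ and writing $M=\max\{|A+A|,|1/A+1/A|\}$, estimate \eqref{eqn:ERABFq} (with $|A+A|\leq M$) together with $M\geq|1/A+1/A|\geq|A|^{4}/E_{+}(1/A)$ gives $M\big(M^{173/104}+q^{-1/285}M^{476/285}\big)\gg|A|^{8/3}$; comparing the two terms on the left and using $(8/3)\cdot(104/277)=832/831$ and $(8/3)\cdot(285/761)=760/761$ produces \eqref{eqn:SRvsSS}. Taking instead $B=A^{-1}$ one has the coincidence $A+B=1/A+1/B=A+1/A$, so the identical computation with $M=|A+1/A|$ yields \eqref{eqn:SRExpander}. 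For the closing assertion it suffices to check that \eqref{eqn:ERCon} holds automatically once $|A|$ avoids the stated intervals: if $|A|\geq|G|^{1+1/51}$ then $|A+B|\geq|A|$ forces $|A+B|^{51/52}\geq|G|$, so the bound is trivial; the range $|A|\leq|G|^{1/2-1/1664}$ requires the more delicate (but routine) observation that for such small $A$ the set $A+B$ cannot concentrate inside the M\"obius image $(cG+d)^{-1}$ of a proper subfield, so \eqref{eqn:ERCon} holds there too.

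The step I expect to be the main obstacle is making the incidence encoding genuinely lossless: identifying the line family $\ell_{b_1,b_2}$ and then verifying that, after removing the two degenerate families (the diagonal $b_1=b_2$ and the solutions with $a_i+b_i=0$), the map from energy solutions to point--line incidences is $O(1)$-to-one, so that $E_{+}(1/A,1/B)$ really is controlled by $I(S\times S,L)$. Once that correspondence is established, invoking Theorem~\ref{thm:balancedIncidences} and carrying out the Cauchy--Schwarz arguments above is routine, and the only remaining nuisance is the short subfield verification underlying the final ``alternatively'' clause.
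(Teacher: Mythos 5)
Your derivation of \eqref{eqn:ERABFq}, \eqref{eqn:SRvsSS} and \eqref{eqn:SRExpander} matches the paper's argument: you re-derive the point--line encoding that the paper packages as Lemma~\ref{lem:IncidencesToEnergies} (after the substitution $c=1/b_1$, $d=1/b_2$ the paper's line $y=(d^2/c^2)x + d(1-d/c)$ is exactly your $\ell_{b_1,b_2}$), apply \eqref{eqn:cartesianIncidence1} to $S\times S$ with $S=(A+B)^{-1}$, and close with Cauchy--Schwarz; the numerology $(8/3)\cdot(104/277)=832/831$ and $(8/3)\cdot(285/761)=760/761$ checks out, and your explicit handling of the degenerate solutions and the $O(1)$-to-one check on the line family are, if anything, more careful than the paper's terse statement of the lemma.

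However, your treatment of the closing ``Alternatively'' clause has a genuine gap. You assert that when $|A|\le|G|^{1/2-1/1664}$ condition \eqref{eqn:ERCon} ``holds there too'' because $A+B$ cannot concentrate in $(cG+d)^{-1}$; that is not true. Take, say, $A=B\subset G\setminus\{0\}$ with $|A|\approx|G|^{1/2-1/1664}$ but $|A+A|>|G|^{1/2}$ (which is certainly possible): then $(A+B)^{-1}\subset G$, so $|(A+B)^{-1}\cap G|\approx|A+B|$ exceeds both $|G|^{1/2}$ and $|A+B|^{51/52}$, and \eqref{eqn:ERCon} fails with $c=1$, $d=0$. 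The correct logic, which the paper uses, is of a different kind: one does not need \eqref{eqn:ERCon} to hold in this range, only that its failure forces the conclusion. Indeed, if the intersection in \eqref{eqn:ERCon} exceeds $|G|^{1/2}$ for such a $G$, then the trivial bound $|A+B|\ge|(A+B)^{-1}\cap(cG+d)|$ gives $|A+B|\gg|G|^{1/2}\ge|A|^{832/831}$, which already yields \eqref{eqn:SRvsSS} (respectively \eqref{eqn:SRExpander}) directly. Your treatment of the other endpoint, $|A|\ge|G|^{1+1/51}\Rightarrow|A+B|^{51/52}\ge|G|\Rightarrow$ \eqref{eqn:ERCon} holds trivially, is correct.
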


For a set $A \subset \F_q$, with a small sum set,  we use Corollary~\ref{cor:SR} to obtain a nontrivial upper bound on the number of solutions to the hyperbola $xy = \alpha$, where $(x, y) \in A \times A$. See \cite[Corollary 15]{AkMuRuSh} for a stronger analogue of this result which holds if $|A| < p^{5/8}$. Also see \cite{CilGar} for sharp estimates concerning intervals in $\F_p$.
\begin{corollary}
\label{cor:Hyperbolic}
Let $A \subset \F_q$. Suppose that 
\begin{equation}
\label{eqn:HypSolCon}
|(A+A)^{-1} \cap (cG+d)| \ll \max\{|G|^{1/2}, |A+A|^{47/48}\}
\end{equation}
for all proper subfields $G$ and elements $c, d$ of $\F_q$. Then, for any $\alpha \in \F_q^*$, we have
\begin{equation}
\label{eqn:HyperbolaEstimate}
|A \cap \alpha/A| \ll |A + A|^{1 - 1/832} + q^{-1/760}|A+A|^{1+1/760}.
\end{equation}
Alternatively, estimate \eqref{eqn:HyperbolaEstimate} holds if $|A| \not\in (|G|^{1/2 -1/1664}, |G|^{1+1/47})$ for all proper subfields $G$ of $\F_q$.
\end{corollary}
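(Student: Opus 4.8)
The plan is to deduce \eqref{eqn:HyperbolaEstimate} from Corollary~\ref{cor:SR} by applying the latter not to $A$ but to the set of solutions itself, in the spirit of \cite[Corollary~15]{AkMuRuSh}. Fix $\alpha\in\F_q^{*}$ and set
\[
S \;=\; A\cap(\alpha/A),\qquad N \;=\; |A\cap(\alpha/A)| \;=\; |S|,
\]
so that the quantity to be bounded is simply $N$; we may assume $N\ge 2$, as otherwise \eqref{eqn:HyperbolaEstimate} is trivial. The key point is that $S$ has a small sum set \emph{and} a small reciprocal sum set simultaneously, because both are trapped inside $A+A$, and a set cannot enjoy both unless it is subfield-like — which is exactly what Corollary~\ref{cor:SR} forbids.

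First I would record the self-reciprocal structure of $S$. If $s\in S$ then $s\in A$ and, since $s\in\alpha/A$, also $\alpha/s\in A$; moreover $\alpha/s\in\alpha/A$ precisely because $s\in A$, so $\alpha/s\in A\cap(\alpha/A)=S$. As $x\mapsto\alpha/x$ is an involution of $\F_q^{*}$, this gives $\alpha/S=S$, hence $1/S=(1/\alpha)S$. Using also $S\subseteq A$ we obtain $S+S\subseteq A+A$ and $1/S+1/S=(1/\alpha)(S+S)$, whence
\[
\max\{\,|S+S|,\,|1/S+1/S|\,\}\;=\;|S+S|\;\le\;|A+A|,
\]
where $(S+S)^{-1}$ and $(A+A)^{-1}$ are understood, as usual, as the sets of reciprocals of their nonzero elements.

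Next I would check that Corollary~\ref{cor:SR} applies to $S$, in the form of the lower bound \eqref{eqn:SRvsSS} with $A$ replaced by $S$. Since $S\subseteq A$ gives $(S+S)^{-1}\subseteq(A+A)^{-1}$, hypothesis \eqref{eqn:HypSolCon} yields, for every proper subfield $G$ and all $c,d\in\F_q$,
\[
|(S+S)^{-1}\cap(cG+d)|\;\le\;|(A+A)^{-1}\cap(cG+d)|\;\ll\;\max\{\,|G|^{1/2},\,|A+A|^{47/48}\,\}.
\]
I would then split on the size of $S+S$. If $|S+S|\ge|A+A|^{611/612}$, then, writing $47/48=611/624$ and $51/52=612/624$, one has $|A+A|^{47/48}\le|S+S|^{51/52}$, so the bound above becomes $\ll\max\{|G|^{1/2},|S+S|^{51/52}\}$, which is exactly condition \eqref{eqn:ERCon} for the pair $(S,S)$; Corollary~\ref{cor:SR}, combined with $\max\{|S+S|,|1/S+1/S|\}\le|A+A|$, then gives
\[
|A+A|\;\gg\;\min\{\,N^{1+1/831},\,q^{1/761}N^{1-1/761}\,\},
\]
and solving for $N$ produces $N\ll|A+A|^{1-1/832}+q^{-1/760}|A+A|^{1+1/760}$, i.e. \eqref{eqn:HyperbolaEstimate}. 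If instead $|S+S|<|A+A|^{611/612}$, then $N=|S|\le|S+S|<|A+A|^{611/612}$, and since $611/612<831/832$ this is already $\ll|A+A|^{1-1/832}$, so \eqref{eqn:HyperbolaEstimate} holds a fortiori. For the alternative hypothesis — $|A|\notin(|G|^{1/2-1/1664},|G|^{1+1/47})$ for all proper subfields $G$ — I would proceed exactly as in the corresponding clause of Corollary~\ref{cor:SR}, the excluded interval changing only through the dependence of its right endpoint on the exponent $47/48$ via $1/(47/48)=1+1/47$, which is what converts the trivial bounds $|(A+A)^{-1}\cap(cG+d)|\le\min\{|G|,|A+A|\}$ into the shape $\max\{|G|^{1/2},|A+A|^{47/48}\}$.

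I expect the only genuinely delicate point to be the transfer of the subfield condition from $(A+A)^{-1}$, which is what we are given, down to $(S+S)^{-1}$, which is what Corollary~\ref{cor:SR} requires for $S$: since $S+S$ may be much smaller than $A+A$, the bound $|A+A|^{47/48}$ need not be dominated by $|S+S|^{51/52}$, and this is resolved by the case split above. The exponent $47/48$ in \eqref{eqn:HypSolCon} — strictly smaller than the $51/52$ of Corollary~\ref{cor:SR} — is tuned precisely so that the threshold $|A+A|^{611/612}$ of the second case lies below the target $|A+A|^{831/832}$; verifying the numerical inequality $611/612<831/832$ is the one computation that really has to be made, and everything else is bookkeeping.
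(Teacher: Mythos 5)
Your proof is correct and follows essentially the same route as the paper: apply Corollary~\ref{cor:SR} to $S=A\cap(\alpha/A)$, using $S+S\subseteq A+A$ and $\alpha/S+\alpha/S\subseteq A+A$ to control the sum and reciprocal-sum sets, with a case split to handle the transfer of the subfield condition from $(A+A)^{-1}$ to $(S+S)^{-1}$. The only cosmetic differences are that you split on $|S+S|$ at the sharp threshold $|A+A|^{611/612}$ (whereas the paper splits on $|S|$ at $|A+A|^{831/832}$ and then checks $47/48<(831\cdot 51)/(832\cdot 52)$), and you note the involution $\alpha/S=S$ giving $|1/S+1/S|=|S+S|$, a small refinement the paper does not need.
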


We use Theorem~\ref{Thm:TSumsetBound} to obtain the following improvement of estimate \eqref{eqn:SRvsSS} for additive groups.
\begin{corollary}
\label{cor:SRI}
Let $A\subset \F_q$ be an additive group. Suppose that
\begin{equation}
\label{eqn:SoRICondition}
|A^{-1} \cap (cG +d)| \ll \max\{|G|^{1/2}, |A|^{4/7}\}
\end{equation}
for all subfields $G$ and elements $c, d$ in $\F_q$.
Then
\begin{equation}
\label{eqn:SoRInterval}
|1/A \pm 1/A| \gg (\log|A|)^{-1/3}\cdot \min\{|A|^{1+1/21}, q^{1/19}|A|^{1-1/19}\}.
\end{equation}
Alternatively, estimate \eqref{eqn:SoRInterval} holds if $|A| \not\in (|G|^{1/2}, |G|^{1+3/7})$ for all proper subfields $G$ of $\F_q$.
\end{corollary}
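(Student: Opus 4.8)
The plan is to deduce the bound from an upper estimate for the additive energy $E_{+}(1/A)$ together with Cauchy--Schwarz, obtaining that estimate by feeding Theorem~\ref{Thm:TSumsetBound} into the incidence reduction behind Corollary~\ref{cor:SR}. First, two preliminary reductions. Since $A$ is an additive group we have $A=-A$, hence $1/A=-1/A$; for any symmetric set $S$ the representation functions of $S+S$ and $S-S$ coincide, so
\[
M:=|1/A+1/A|=|1/A-1/A|,
\]
and it suffices to bound this single quantity from below. Secondly, $E_{+}(1/A)\ge |1/A|^{4}/M\gg |A|^{4}/M$ by Cauchy--Schwarz.

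Following the reduction used in the proof of Corollary~\ref{cor:SR} (writing $E_{+}(1/A)=E_{+}(1/A,1/A)$ as a point--line incidence count), one obtains $E_{+}(1/A)\ll I\bigl(A^{-1}\times A^{-1},L\bigr)+|A|^{2}$ for a family $L$ of $O(|A|^{2})$ lines; this is exactly why the natural hypothesis here is a subfield condition on $A^{-1}$. The key new point is to bound $I(A^{-1}\times A^{-1},L)$ via Theorem~\ref{Thm:TSumsetBound} rather than Theorem~\ref{thm:balancedIncidences}. Condition~\eqref{eqn:SoRICondition} is precisely hypothesis~\eqref{eqn:TSmallSumsetRestriction} for the set $A^{-1}$ with $\delta=3/7$, and the relevant sum set $A^{-1}+A^{-1}$ has size exactly $M$; hence
\[
T(A^{-1})\ll \log|A|\cdot\max\bigl\{M^{7/4}|A|^{3},\ M^{6/5}|A|^{18/5},\ |A|^{5-3/14},\ M^{7/4}|A|^{7/2}q^{-1/4}\bigr\}.
\]
Combining this with the standard bound $I(A^{-1}\times A^{-1},L)\ll T(A^{-1})^{1/3}|L|^{2/3}+|L|$ and $|L|\ll|A|^{2}$ gives an upper bound for $E_{+}(1/A)$ in terms of $M$.

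Combining the lower and upper bounds for $E_{+}(1/A)$ and clearing $M$ yields an inequality of the form $|A|^{4}\ll\log^{1/3}|A|\cdot\max\{\dots\}+M|A|^{2}$, in which at least one term on the right must dominate; each case gives a lower bound on $M$. Carrying this out, the decisive two cases come from the second and fourth terms above and read $M\gg|A|^{1+1/21}$ and $M\gg q^{1/19}|A|^{1-1/19}$, while the other cases give strictly weaker constraints (the $M|A|^{2}$ term only gives the harmless $M\gg|A|^{2}$), and every logarithmic loss is at most $(\log|A|)^{1/3}$. This is~\eqref{eqn:SoRInterval}, for both $1/A+1/A$ and $1/A-1/A$ by the first reduction. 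For the ``alternatively'' clause, observe that the two decisive bounds above do not involve $\delta$; if $|A|\notin(|G|^{1/2},|G|^{1+3/7})$ for every proper subfield $G$, then the trivial estimate $|A^{-1}\cap(cG+d)|\le\min\{|A|,|G|\}$ already verifies hypothesis~\eqref{eqn:TSmallSumsetRestriction} for $A^{-1}$ with $\delta=3/10$ (the exponent being matched to the endpoint $1+3/7=1/(1-3/10)$), and the same argument runs.

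The step I expect to be delicate is the bootstrap: the target quantity $M$ appears on both sides of the final inequality---on the left through the energy lower bound, on the right through Theorem~\ref{Thm:TSumsetBound}---so one must check that the exponents genuinely permit solving for $M$ with a gain, and, since Theorem~\ref{Thm:TSumsetBound} is a four-term maximum, keep track of which term can be extremal in each range of the parameters $|A|$ and $q$. A more routine technical point is to check that the incidence reduction from Corollary~\ref{cor:SR} really does place the Cartesian product on $A^{-1}$ and uses only $O(|A|^{2})$ lines, and to absorb the various small powers of $\log|A|$ into the single factor $(\log|A|)^{-1/3}$.
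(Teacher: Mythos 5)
Your proof follows the paper's route exactly: Cauchy--Schwarz \eqref{eqn:AEnergyCS} gives $E_{+}(1/A)\ge|A|^{4}/M$, Lemma~\ref{lem:IncidencesToEnergies} (with $(A+A)^{-1}=A^{-1}$ since $A$ is a group) together with \eqref{eqn:TriplesToIncidence} reduces the energy to $T(A^{-1})^{1/3}|A|^{4/3}+|A|^{2}$, and Theorem~\ref{Thm:TSumsetBound} applied to $A^{-1}$ with $\delta=3/7$ (condition~\eqref{eqn:SoRICondition}) is then solved for $M$, which is precisely what the paper does. Your two refinements are both correct and slightly sharper than the paper's presentation: using $1/A+1/A=1/A-1/A$ (since $A=-A$) avoids the paper's separate remark for the difference set, and for the ``alternatively'' clause your choice $\delta=3/10$ is the one actually furnished by the trivial bound $|A^{-1}\cap(cG+d)|\le\min\{|A|,|G|\}$ under $|A|\notin(|G|^{1/2},|G|^{1+3/7})$, and you correctly check that it suffices because the two decisive terms of \eqref{eqn:SumsetToTriples} are $\delta$-independent while the third yields $M\gg|A|^{1+\delta/6}=|A|^{1+1/20}\ge|A|^{1+1/21}$; the paper's own justification of this clause reasons with the exclusion $|G|\notin(|A|^{4/7},|A|^{2})$, i.e.\ $\delta=3/7$, which is stronger than the exclusion $|G|\notin(|A|^{7/10},|A|^{2})$ that the statement of the corollary actually provides, so your version matches the statement more precisely.
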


\section{Preliminaries}
We require a basic extension of a sum-product type estimate due to Roche-Newton \cite{Roche}. Since \cite{Roche} has not been peer-reviewed, we provide a full proof of this result in Appendix~\ref{Appendix:SumRatio}, which closely follows the original arguments.
\begin{lemma}
\label{lem:sum-ratio}
Let $A \subseteq \F_q$ and let $0 < \eta < 1/8$. Suppose $|A| \ll q^{1/2}$ and that
\begin{equation}
\label{eqn:SumRatioCondition}
 |A \cap cG| \leq \max \big\{C|G|^{1/2},\eta|A|\big\} 
\end{equation}
for all proper subfields $G$ of $\F_q$, elements $c\in \F_q$ and some constant $C > 0$. Then either
\[ 
|A \pm A|^7|A/A|^4 \gg_{\eta} |A|^{12} \qquad \text{or} \qquad |A \pm A|^6|A/A|^5 \gg_{\eta} |A|^{12}.
\]
If $|A| > \eta^{-1}q^{1/2}$, irrespective of condition \eqref{eqn:SumRatioCondition}, we have
\[
 |A \pm A|^7|A/A|^4 \gg_{\eta} |A|^{10}q.
\]
\end{lemma}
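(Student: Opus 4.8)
The plan is to follow Roche-Newton's argument \cite{Roche} essentially line by line, its engine being Rudnev's point-plane incidence bound \cite{Rud2}, and to feed in the two finite-field ingredients --- an $\F_q$-form of that incidence bound and the subfield hypothesis \eqref{eqn:SumRatioCondition} --- precisely where the argument over a prime field relies on the absence of proper subfields. The first move is to reduce the dichotomous conclusion to a single upper bound $M$ on a higher-moment quantity attached to $A$, most naturally a weighted multiplicative energy of the difference set (equivalently, a count of solutions in $A$ to a balanced bilinear equation such as $(a_1-a_2)(a_3-a_4)=(a_5-a_6)(a_7-a_8)$). Applications of the Cauchy-Schwarz inequality, together with the basic submultiplicativity of cardinalities under the operations involved, convert a bound of the form $M\ll \mathcal M$ into a lower bound of the shape $|A-A|^{a}|A/A|^{b}\gg |A|^{c}/\mathcal M^{1/2}$; running the same chain with one copy of $A$ replaced by $-A$ --- which has the same cardinality and the same subfield structure as $A$ --- gives the $A+A$ version, so that it suffices to treat $A-A$. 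The two exponent pairs $(7,4)$ and $(6,5)$ against $|A|^{12}$ are exactly what the two dominant terms of the resulting incidence bound become after this conversion and the optimization over a popular dyadic scale, and the quantitative non-correlation hypothesis \eqref{eqn:SumRatioCondition} is what forces these two terms to dominate a third, subfield-type, term.

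The second step realizes $M$ as an incidence count $I(\mathcal P,\Pi)$ between a point set $\mathcal P$ and a plane set $\Pi$ in $\F_q^3$, each of size $O(|A|^4)$ after dyadic pigeonholing that localizes the relevant representation functions; here $A/A$ enters through an Elekes-type parametrization of the planes, and the ``diagonal'' incidences account for the trivial lower bound on $M$. Since $|A|\ll q^{1/2}$ one has $|\mathcal P|,|\Pi|\ll q^2$, which is precisely the regime in which \cite{Rud2} gives
\[
I(\mathcal P,\Pi)\ll \frac{|\mathcal P||\Pi|}{q}+|\mathcal P|^{1/2}|\Pi|+k|\Pi|,
\]
where $k$ is the largest number of points of $\mathcal P$ on a single line. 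In this range the first term is dominated by the second, so the bound collapses to $I\ll|\mathcal P|^{1/2}|\Pi|+k|\Pi|$, and the two surviving terms yield, after the conversion of the previous paragraph, the alternatives $|A\pm A|^{7}|A/A|^{4}\gg|A|^{12}$ and $|A\pm A|^{6}|A/A|^{5}\gg|A|^{12}$. When instead $|A|>\eta^{-1}q^{1/2}$ the roles reverse: the point set is too large for the bound above, so one works in the large-set (complete bipartite) regime via a Vinh-Rudnev type estimate, which needs no collinearity hypothesis and in which $|\mathcal P||\Pi|/q$ always dominates, and this yields the unconditional bound $|A\pm A|^{7}|A/A|^{4}\gg|A|^{10}q$.

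The one genuinely delicate step, and the only place \eqref{eqn:SumRatioCondition} is used, is the control of $k$. A line carrying many points of $\mathcal P$ must be pulled back to an arithmetic statement about $A$: it forces $cA$, for some $c\in\F_q$, to meet a coset $cG$ of a proper subfield $G$ in more than $\max\{C|G|^{1/2},\eta|A|\}$ elements, which \eqref{eqn:SumRatioCondition} prohibits; one must also check that the two branches of this maximum match the two alternatives in the conclusion. Establishing this structural dichotomy, and the passage from the prime-field statement of Rudnev's theorem to a version over $\F_q$ in which the collinearity input is exactly this subfield-coset obstruction, is the heart of the ``basic extension'' and the place requiring the most care. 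The remaining ingredients --- dyadic decomposition, Cauchy-Schwarz, Pl\"unnecke-Ruzsa covering inequalities, and the elementary passage from the energy bound to lower bounds on $|A\pm A|$ and $|A/A|$ --- are routine.
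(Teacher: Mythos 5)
Your proposal is built around the wrong engine. You propose to run Rudnev's point-plane incidence theorem \cite{Rud2} as the main tool, realizing an eighth-moment count as $I(\mathcal P,\Pi)$ in $\F_q^3$ and feeding the subfield hypothesis into the collinearity parameter $k$. The paper's proof of Lemma~\ref{lem:sum-ratio} does not use point-plane incidences at all. It is the classical Katz--Shen/Li--Roche-Newton covering argument: one applies the refined Pl\"unnecke--Ruzsa inequality (Lemma~\ref{lem:PlRuRefined}) to control $|B+B+B+B|$ by $|A+A|^3/|A|^2$; one pigeonholes to find popular abscissas and fibres $B_{x_*}, B_1, S_y$; one uses Shen's covering lemma (Lemma~\ref{lem:ShenCovering}, packaged in Claim~\ref{app:Covering}) to cover dilates $bC$ by few translates of $B$; and then one does a case analysis on the quotient set $R(B_1)$: whether $1+R(B_1)\subseteq R(B_1)$, whether $B_1\cdot R(B_1)\subseteq R(B_1)$, and, if both closures hold, Lemma~\ref{lem:QuotientSetSubfield} says $R(B_1)$ is the subfield generated by $B_1$, at which point \eqref{eqn:SumRatioCondition} rules out the problematic case. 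In the non-closed cases one picks an element $r\not\in R(B_1)$ and uses Lemma~\ref{lem:RBcard} to expand via $|Y+rY|=|Y|^2$, then covers. The pivoting Lemma~\ref{lem:pivotting} gives the large-$|A|$ branch. None of this is a point-plane incidence count.

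The distinction matters for two concrete reasons. First, Rudnev's theorem in the form you quote requires a constraint on the characteristic $p$ (the argument as used in \cite{AkMuRuSh} needs $|A|\ll p^{2/3}$); Lemma~\ref{lem:sum-ratio} is designed precisely for the regime where $q$ is a high power of $p$ and $|A|$ may be far larger than $p^{2/3}$, which the paper's covering approach handles but yours would not. Second, in Rudnev's bound the term $k|\Pi|$ is controlled by points of $\mathcal P$ on a common line in $\F_q^3$, which corresponds to rich \emph{affine} fibres of the representation function and has no mechanism that naturally turns into the coset condition $|A\cap cG|\leq\max\{C|G|^{1/2},\eta|A|\}$; the subfield condition enters the actual proof at the quotient-set stage, via $R(B_1)=\F_{B_1}$ and the assertion $|B_1|\leq|A\cap R(B_1)|\leq\eta|A|$ producing a contradiction with $|B_1|>\eta|A|$. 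Your sketch of how ``a line carrying many points forces $cA$ to meet $cG$ richly'' is not something that comes out of Rudnev's theorem, and you give no argument for it. The proposal as written does not prove the lemma and would need to be replaced wholesale by the covering/quotient-set argument.
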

\begin{remark}
Following a similar approach as \cite[Theorem~1]{Gar2} or \cite[Corollary~2]{Vinh}, for any set $A\subset \F_q$, one can establish the bound
\[
\max\{|A\pm A|, |A/A|\} \gg \min\{|A|^{1/2}q^{1/2}, |A|^{2}/q^{1/2}\}.
\]
This bound is nontrivial if $|A| > q^{1/2}$ and, as demonstrated in \cite{Gar2}, it is optimal up to constants if $|A| >q^{2/3}$. We point out that Lemma~\ref{lem:sum-ratio} improves on this bound if $|A| < q^{13/24}$.
\end{remark}
Given sets $A, B \subseteq \F_q$, we define the multiplicative energy between $A$ and $B$ by
\begin{equation}
\label{eqn:MEnergyDefn}
E_{\times}(A, B) = |\{ (a_1, a_2, b_1, b_2) \in A^{2}\times B^{2} : a_1 b_1 = a_2 b_2\}|.
\end{equation}
We write simply $E_{\times}(A)$ instead of $E_{\times}(A, A)$. Using the Cauchy-Schwarz inequality, we get
\begin{equation}
\label{eqn:MEECS}
E_{\times}(A, B) \leq E_{\times}(A)^{1/2}E_{\times}(B)^{1/2}.
\end{equation}
See \cite[Corollary~2.10]{TaoVu} for a proof of the above inequality. 

One may recover a bound on the multiplicative energy of subsets of $\F_q$ from the proof of~\cite[Theorem~1.4]{RoLi}. We state a slightly generalised version of this bound below and give a sketch of the proof in Appendix~\ref{Appendix:SumRatio}.
\begin{lemma}
\label{lem:energybound}
Let $A \subseteq \F_q$. Suppose that
\begin{equation}
\label{eqn:EnergyBoundRestriction}
 |A \cap cG| \ll \max \big\{|G|^{1/2}, |A|^{1-\delta} \big\} 
\end{equation}
for all proper subfields $G$ of $\F_q$, elements $c\in \F_q$ and some fixed $\delta > 0$. Then
\begin{equation}
\label{eqn:MEnergyBound}
 E_{\times}(A) \ll \log|A|\cdot \max\{|A + A|^{7/4}|A|, |A + A|^{6/5}|A|^{8/5}, |A|^{3-\delta/2}, |A + A|^{7/4}|A|^{3/2}q^{-1/4} \}.
\end{equation}
In the above estimate, one can replace the sum set $A + A$ by the difference set $A -A$.
\end{lemma}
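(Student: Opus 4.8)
The plan is to adapt the argument of Li and Roche-Newton \cite{RoLi}, whose starting point is the identity
\[
E_{\times}(A)=\sum_{\lambda\in A/A}|A\cap\lambda A|^{2},
\]
together with Lemma~\ref{lem:sum-ratio}. The first step is a dyadic decomposition over the size of $|A\cap\lambda A|$: separating off the contribution of $\lambda=1$, which is $|A|^{2}\le|A+A|^{7/4}|A|$, a pigeonholing produces a dyadic level $\tau\ge 1$ and a set $D=\{\lambda\in A/A:\tau\le|A\cap\lambda A|<2\tau\}$ with
\[
E_{\times}(A)\ll \log|A|\cdot\tau^{2}|D|+|A|^{2}.
\]
Since $\sum_{\lambda}|A\cap\lambda A|=|A|^{2}$ we have $\tau|D|\le|A|^{2}$, so if $\tau\le|A|^{1-\delta/2}$ then $\tau^{2}|D|\le\tau|A|^{2}\le|A|^{3-\delta/2}$, which is one of the terms on the right of \eqref{eqn:MEnergyBound}. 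Thus we may assume $\tau>|A|^{1-\delta/2}$; we may also assume $|A|$ exceeds any fixed threshold depending on $\delta$, since otherwise $E_{\times}(A)\le|A|^{3}$ is bounded and \eqref{eqn:MEnergyBound} is trivial (with a constant allowed to depend on $\delta$).

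Next, fix $\lambda_{0}\in D$ and put $A'=A\cap\lambda_{0}A$, so $\tau\le|A'|<2\tau$. One has the additive containments $A'\pm A'\subseteq A\pm A$, and — this is the reason for the $|A|^{1-\delta}$ margin in \eqref{eqn:EnergyBoundRestriction} — the hypothesis transfers to $A'$: indeed $|A'\cap cG|\le|A\cap cG|\ll\max\{|G|^{1/2},|A|^{1-\delta}\}$, and since $|A'|>|A|^{1-\delta/2}$ we get $|A|^{1-\delta}\le\eta|A'|$ once $|A|$ is large, so $A'$ satisfies \eqref{eqn:SumRatioCondition} with, say, $\eta=1/16$. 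The combinatorial heart of the argument, following \cite{RoLi}, is then to show that $\tau^{2}|D|$ is controlled by $|A'\pm A'|$ and $|A'/A'|$: one exploits Katz–Koester-type inclusions among the sets $\{A\cap\lambda A\}_{\lambda}$ and their sumsets, such as $(A\cap\lambda A)\pm(A\cap\lambda A)\subseteq(A\pm A)\cap\lambda(A\pm A)$, together with Cauchy–Schwarz and the Plünnecke–Ruzsa inequality, to bound $|D|$ and to extract from the popularity of $A'$ the multiplicative structure needed to bound $|A'/A'|$. Feeding these estimates into Lemma~\ref{lem:sum-ratio} applied to $A'$ then yields the desired bound: the two "small-set" alternatives $|A'\pm A'|^{7}|A'/A'|^{4}\gg|A'|^{12}$ and $|A'\pm A'|^{6}|A'/A'|^{5}\gg|A'|^{12}$ produce the two terms $|A+A|^{7/4}|A|$ and $|A+A|^{6/5}|A|^{8/5}$, while the "large-set" alternative $|A'\pm A'|^{7}|A'/A'|^{4}\gg|A'|^{10}q$, available precisely when $|A'|>\eta^{-1}q^{1/2}$, produces the term $q^{-1/4}|A+A|^{7/4}|A|^{3/2}$. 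Since Lemma~\ref{lem:sum-ratio} and all the relevant inclusions are symmetric under $A'\mapsto -A'$, one may replace $A+A$ by $A-A$ throughout.

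The step I expect to be the main obstacle is the combinatorial core just described, and in particular carrying it out while losing only the single factor $\log|A|$: this rules out an appeal to the Balog–Szemerédi–Gowers theorem (whose losses are polynomial in the doubling and would corrupt the exponents) and forces the more delicate direct dyadic route of \cite{RoLi}, in which the exponents $7$ and $6$ from Lemma~\ref{lem:sum-ratio} must be propagated so as to reproduce exactly the exponents $7/4$ and $6/5$ in \eqref{eqn:MEnergyBound}. A secondary, more routine, difficulty is the bookkeeping of the various thresholds — $\tau\sim|A|^{1-\delta/2}$, $|A'|\sim q^{1/2}$, and the small-$|A|$ range — and of the implied constants in the transfer of \eqref{eqn:EnergyBoundRestriction} to the subset $A'$.
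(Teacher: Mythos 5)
Your overall plan — import the dyadic decomposition of the multiplicative energy, dispose of the term $|A|^{3-\delta/2}$ by a threshold on $\tau$, and handle the remainder via the sum-ratio machinery with the subfield condition transferred down to the refined set — tracks the spirit of the Roche–Newton–Li argument, and your transfer of \eqref{eqn:EnergyBoundRestriction} to the subset and your accounting of the $q$-dependent alternative are both sound. However, the route you propose is not the paper's, and it leaves a genuine gap precisely where you flag the ``main obstacle.''

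The paper does \emph{not} invoke Lemma~\ref{lem:sum-ratio} as a black box on $A'=A\cap\lambda_{0}A$. Instead it imports the whole internal structure of~\cite[Theorem~1.4]{RoLi}: the dyadic decomposition produces parameters $L,N$ with $M:=LN^{2}\gg E_{\times}(A)/\log|A|$, and then \cite[Lemma~3.1]{RoLi} — a further clustering/pigeonholing step, not merely the passage to $A\cap\lambda_{0}A$ — produces a more refined subset $\tilde{A}_{x_{0}}\subset A$ with $|\tilde{A}_{x_{0}}|\gg LM/|A|^{3}$. The argument then runs a direct case analysis on $R(\tilde{A}_{x_{0}})$ parallel to, but separate from, the proof of Lemma~\ref{lem:sum-ratio}; the only new work is in Case~5, which is split into three subcases so that the hypothesis $|A\cap cG|\ll\max\{|G|^{1/2},|A|^{1-\delta}\}$ enters where $R(\tilde{A}_{x_{0}})$ is a proper subfield and produces exactly the $|A|^{3-\delta/2}$ term (via $M^{2}/|A|^{5}<|\tilde{A}_{x_{0}}|\ll|A|^{1-\delta}$, so $M\ll|A|^{3-\delta/2}$). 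Your derivation of that term through the threshold $\tau\le|A|^{1-\delta/2}$ gives the same bound but does not line up with how the paper earns it.

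The genuine gap in your proposal is the ``combinatorial heart.'' Applying Lemma~\ref{lem:sum-ratio} to $A'$ and using $|A'\pm A'|\le|A\pm A|$ gives, from the first alternative, $|A+A|^{7/4}\gg\tau^{3}/|A'/A'|$, so to conclude $\tau^{2}|D|\ll|A+A|^{7/4}|A|$ one needs $|D|\,|A'/A'|\ll\tau|A|$. Neither of the available elementary bounds closes this: $\tau|D|\le|A|^{2}$ together with the trivial $|A'/A'|\ge\tau$ gives only $|D|\,|A'/A'|\ge|D|\tau$, which can be as large as $|A|^{2}$; and a Katz–Koester bound $|D|\ll|A+A|^{2}/\tau$ leads to $\tau^{2}|D|\ll\tau|A+A|^{2}$, which matches the target $|A+A|^{7/4}|A|$ only when $\tau\ll|A|/|A+A|^{1/4}$, not in general. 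You would need an additional nontrivial upper bound on $|A'/A'|$ (or a sharper relation between $|D|$, $\tau$ and the sum set), and this is exactly the role played by RoLi's refined set $\tilde{A}_{x_{0}}$ and its accompanying structure, which a single passage to $A\cap\lambda_{0}A$ does not provide. So as written the argument does not close; recovering it would essentially amount to re-deriving the clustering lemma from~\cite{RoLi} that the paper cites.
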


For $X \subset \mathbb{F}_q$, we define the quotient set of $X$ by
 \begin{displaymath}
R(X) =\bigg\{\frac{x_1 - x_2}{x_3 - x_4} : x_1, x_2, x_3, x_4 \in X, x_3 \neq x_4\bigg\}.
 \end{displaymath}
We make frequent use of the following basic variant of~\cite[Lemma 2.50]{TaoVu}.
\begin{lemma}
 \label{lem:RBcard}
 Let $X \subset \mathbb{F}_q$ and $r \in \mathbb{F}_q$. If $r\not\in R(X)$, then for any nonempty subsets $X_1, X_2 \subseteq X$, we have $|X_1||X_2| = |X_1 + rX_2|.$
\end{lemma}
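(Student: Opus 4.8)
The plan is to exhibit an explicit injection from $X_1 \times X_2$ into the set $X_1 + rX_2$, and then argue that in fact this map is a bijection onto that set, so that the two cardinalities coincide. Concretely, consider the map $\phi : X_1 \times X_2 \to \F_q$ given by $\phi(x, y) = x + ry$. Its image is by definition exactly $X_1 + rX_2$, so it suffices to prove that $\phi$ is injective; this immediately yields $|X_1||X_2| = |X_1 \times X_2| = |\phi(X_1 \times X_2)| = |X_1 + rX_2|$.

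For injectivity, suppose $(x_1, y_1), (x_2, y_2) \in X_1 \times X_2$ satisfy $x_1 + ry_1 = x_2 + ry_2$. Rearranging gives $x_1 - x_2 = r(y_2 - y_1)$. If $y_1 \neq y_2$, then $y_2 - y_1 \neq 0$, and also $x_1 - x_2 = r(y_2-y_1)$ forces $x_1 \neq x_2$ (since $r \neq 0$: indeed $0 \in R(X)$ always, as $X$ has at least two elements, so $r \notin R(X)$ implies $r \neq 0$). Hence we may write
\[
r = \frac{x_1 - x_2}{y_2 - y_1},
\]
with $x_1, x_2, y_2, y_1 \in X$ (because $X_1, X_2 \subseteq X$) and $y_2 \neq y_1$. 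But this exhibits $r$ as an element of $R(X)$, contradicting the hypothesis $r \notin R(X)$. Therefore $y_1 = y_2$, and then $x_1 - x_2 = r(y_2 - y_1) = 0$ gives $x_1 = x_2$. Thus $\phi$ is injective, completing the proof.

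I do not expect any genuine obstacle here: the statement is essentially a repackaging of the definition of $R(X)$, and the only mild subtlety is making sure $r \neq 0$ is available (so that the implication $x_1 - x_2 = r(y_2 - y_1) \neq 0$ goes through), which follows from the standing assumption that all sets under consideration have more than one element. One should also note the trivial edge case that if either $X_1$ or $X_2$ is empty the identity $0 = 0$ holds vacuously, but the hypothesis already asks for nonempty subsets, so this need not be addressed.
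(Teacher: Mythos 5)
Your proof is essentially identical to the paper's: both show the map $(x,y)\mapsto x+ry$ is injective on $X_1\times X_2$ by observing that a collision $x_1+ry_1=x_2+ry_2$ with $y_1\neq y_2$ would exhibit $r=(x_1-x_2)/(y_2-y_1)\in R(X)$. Your aside about $r\neq 0$ forcing $x_1\neq x_2$ is correct but unnecessary, since $R(X)$ places no restriction on the numerator and the contradiction follows even when $x_1=x_2$.
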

\begin{proof}
If two distinct pairs $(x_1, x_2), (y_1, y_2) \in X_1\times X_2$ satisfy $x_1 + rx_2 = y_1 + ry_2$, it follows that $r = (x_1 - y_1)/(y_2 - x_2) \in R(X)$. Hence if $r\not\in R(X)$, then the map $(x_1, x_2) \mapsto x_1 + rx_2$ is injective on $X_1 \times X_2$, which implies the required result.
\end{proof}

We state a corollary of Lemma~\ref{lem:RBcard}, which also appears in \cite[Corollary~2.51]{TaoVu}.
\begin{lemma}
\label{lem:RBFq}
Let $X \subset \mathbb{F}_q$ with $|X| > q^{1/2}$, then $R(X) = \mathbb{F}_q$.
\end{lemma}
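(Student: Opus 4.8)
The plan is to deduce this immediately from Lemma~\ref{lem:RBcard} by a counting argument, arguing by contradiction. Suppose there were some $r \in \mathbb{F}_q$ with $r \notin R(X)$. First I would dispense with the case $r = 0$: since $|X| > q^{1/2} > 1$, the set $X$ contains at least two elements, so we may pick $x_3 \neq x_4$ in $X$ and also pick $x_1 = x_2 \in X$, whence $0 = (x_1 - x_2)/(x_3 - x_4) \in R(X)$; thus necessarily $r \neq 0$ (in fact this step is not even needed, but it clarifies that $R(X)$ is nonempty and that Lemma~\ref{lem:RBcard} applies nontrivially).

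Next I would apply Lemma~\ref{lem:RBcard} with the choice $X_1 = X_2 = X$. Since $r \notin R(X)$, the lemma yields $|X|^2 = |X||X| = |X + rX|$. But $X + rX$ is a subset of $\mathbb{F}_q$, so $|X + rX| \leq q$, giving $|X|^2 \leq q$, i.e. $|X| \leq q^{1/2}$. This contradicts the hypothesis $|X| > q^{1/2}$. Hence no such $r$ exists, so every element of $\mathbb{F}_q$ lies in $R(X)$, that is, $R(X) = \mathbb{F}_q$.

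There is essentially no obstacle here; the only point requiring a moment's care is making sure Lemma~\ref{lem:RBcard} is applicable (the subsets $X_1, X_2$ must be nonempty, which holds trivially) and recording that the conclusion $|X_1||X_2| = |X_1 + rX_2|$ together with the ambient bound $|X_1 + rX_2| \leq q$ forces the size restriction. The argument is entirely self-contained given the preceding lemma.
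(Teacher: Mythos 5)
Your proof is correct and is exactly the standard argument the paper has in mind: the paper states Lemma~\ref{lem:RBFq} simply as ``a corollary of Lemma~\ref{lem:RBcard}'' with a citation, and the intended derivation is precisely what you wrote, namely that if $r\notin R(X)$ then $|X|^2 = |X+rX|\le q$, contradicting $|X|>q^{1/2}$. Your aside about $r=0$ is, as you note, unnecessary, since that case is already subsumed by the counting contradiction.
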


The next lemma has been extracted from the proof of the main result in~\cite{RoLi}. It serves as a more favourable substitute for a similar result by Katz and Shen~\cite{KatzShen2}. A precise statement of the latter can also be found in \cite[Lemma~6]{Jones}.
\begin{lemma}
\label{lem:QuotientSetSubfield}
Let $X \subset \mathbb{F}_q$. Suppose that
 \begin{displaymath}
1+R(X) \subseteq R(X) \quad \text{and} \quad X\cdot R(X) \subseteq R(X).
 \end{displaymath}
Then $R(X)=\F_X$, where $\F_X$ denotes the subfield of $\mathbb{F}_q$ generated by $X$.
\end{lemma}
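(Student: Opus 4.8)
The plan is to show that $R(X)$ is closed under addition and multiplication, and contains additive and multiplicative inverses, so that it is a subfield of $\mathbb{F}_q$; since $X \subseteq R(X)$ (using that $0 \in R(X)$ and that for $x \in X$ the ratio $(x - x_2)/(x_3 - x_4)$ can be arranged to equal $x$ after first checking $1 \in R(X)$, or more directly via $X \cdot R(X) \subseteq R(X)$ applied to $1 \in R(X)$), it will then contain $\mathbb{F}_X$, and minimality of $\mathbb{F}_X$ gives the reverse inclusion $R(X) \subseteq \mathbb{F}_X$ because $\mathbb{F}_X$, being a field containing $X$, contains all differences and quotients of differences of elements of $X$. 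So the real content is: the two hypotheses $1 + R(X) \subseteq R(X)$ and $X \cdot R(X) \subseteq R(X)$ force $R(X)$ to be a subfield.

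First I would record the elementary symmetries of $R(X)$ that hold for free: $0 \in R(X)$ (take $x_1 = x_2$), $R(X) = -R(X)$ (swap $x_1, x_2$), and $R(X) \setminus \{0\}$ is closed under taking reciprocals (swap the numerator pair with the denominator pair). Also $1 \in R(X)$ as long as $|X| \geq 2$. Combining $-R(X) = R(X)$ with $1 + R(X) \subseteq R(X)$ yields $-1 + R(X) \subseteq R(X)$, hence $n + R(X) \subseteq R(X)$ for every integer $n$, i.e. the prime field lies in $R(X)$ and $R(X)$ is a union of cosets of the prime field. The combination of closure under $r \mapsto 1 + r$ and $r \mapsto 1/r$ is exactly the generating pair of operations for the field generated by an element, so from any single nonzero $r \in R(X)$ one already gets $\mathbb{F}_p(r) \subseteq R(X)$; this is the standard Mattila/Bourgain-style trick and is where the hypotheses really bite.

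The main obstacle — and the step I would spend the most care on — is upgrading from "closed under $+1$ and reciprocal" to genuine closure under addition of two arbitrary elements $r, s \in R(X)$. The route I would take: given $r, s \in R(X)$ with $s \neq 0$, write $r + s = s(r/s + 1)$; by closure under $+1$ it suffices to have $r/s \in R(X)$, and then apply $X \cdot R(X) \subseteq R(X)$ together with reciprocation — but $s$ need not lie in $X$, so one cannot multiply by $s$ directly. Instead I would use the identity $r/s = r \cdot (1/s)$ and argue that multiplication by an \emph{arbitrary} element of $R(X)$ preserves $R(X)$: starting from $X \cdot R(X) \subseteq R(X)$ one bootstraps, since a general element of $R(X)$ is a ratio $(x_1 - x_2)/(x_3 - x_4)$ and multiplication by such a ratio can be realised by a short composition of the allowed operations (multiply by elements of $X$, add $1$, take reciprocals) once one checks that differences $x_1 - x_2$ with $x_i \in X$ act multiplicatively on $R(X)$ — which follows because $x_1 - x_2 = (x_3 - x_4) \cdot \big((x_1 - x_2)/(x_3 - x_4)\big)$ and $X - X$ scaled appropriately sits inside $R(X) \cdot (X - X)$. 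Once $R(X)$ is closed under multiplication by all of $R(X)$ and under $+1$, additive closure follows from the displayed identity $r + s = s(r/s+1)$, subtraction comes from $-R(X) = R(X)$, and multiplicative inverses are the reciprocal symmetry; hence $R(X)$ is a subfield, completing the argument. I expect the bookkeeping in this bootstrapping — tracking which of $+1$, reciprocal, and "multiply by an element of $X$" to compose, and handling the $0$ cases — to be the only genuinely delicate part; everything else is the routine subfield-axiom check.
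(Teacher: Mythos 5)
The paper does not supply its own proof of this lemma; it cites the proof of the main theorem of Roche--Newton and Li~\cite{RoLi}, so I assess your argument on its own. Your framing is sound: $R(X)\subseteq\F_X$ is immediate, so it suffices to show that $R:=R(X)$ is a subfield containing $X$, and the elementary facts you record ($0,1\in R$, $R=-R$, $R\setminus\{0\}$ closed under reciprocals, $\F_p\subseteq R$, $X\subseteq R$) are all correct. You also correctly locate the crux at closure of $R$ under multiplication, from which additive closure follows via $r+s=s(r/s+1)$.

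The gap is the step asserting $(X-X)\cdot R\subseteq R$. Your stated justification --- that ``$x_1-x_2=(x_3-x_4)\cdot\big((x_1-x_2)/(x_3-x_4)\big)$ and $X-X$ scaled appropriately sits inside $R(X)\cdot(X-X)$'' --- is not an argument: the identity is a tautology, and $X-X\subseteq R\cdot(X-X)$ holds trivially (take the factor $1\in R$) and yields nothing. Worse, $(X-X)\cdot R\subseteq R$ is essentially equivalent to the multiplicative closure $R\cdot R\subseteq R$ you are trying to prove: one passes back and forth using $R=(X-X)/(X-X)$ and the fact that $(X-X)^{-1}R\subseteq R$ follows from $(X-X)R\subseteq R$ by reciprocation. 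So the reduction is circular, and it cannot be realised as a ``short composition'' of $r\mapsto r+1$, $r\mapsto xr$ ($x\in X$), $r\mapsto 1/r$: the scalar multiplications lying in the M\"obius group these maps generate have multiplier in the multiplicative subgroup of $\F_q^*$ generated by $X$ and $-1$, which in general does not contain $x_1-x_2$. (The side claim that $+1$ and reciprocal alone give $\F_p(r)\subseteq R$ from a single $r$ is likewise false: that orbit is a $\mathrm{PGL}_2(\F_p)$-orbit of size at most $|\mathrm{PGL}_2(\F_p)|=p^3-p$, typically far smaller than $|\F_p(r)|=p^{\deg r}$.) A route that does work, and is essentially that of~\cite{RoLi}, is to prove by induction on $k$ that $R+m\subseteq R$ for every monomial $m=x_1\cdots x_k$ with $x_i\in X$: writing $m=xm'$, one has $(r+m)/x=r/x+m'$, with $r/x\in R$ (from $X\cdot R\subseteq R$ together with reciprocation), so the inductive hypothesis and a final multiplication by $x\in X$ give $r+m\in R$. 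Iterating from $0\in R$ yields $\F_p[X]\subseteq R$; as $\F_p[X]$ is a finite integral domain it is a field and hence equals $\F_X$, and combined with the trivial $R\subseteq\F_X$ this gives $R=\F_X$. This is where the hypothesis $X\cdot R\subseteq R$ is genuinely used: it lets you strip one $X$-factor from the monomial at a time.
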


The following lemma combines \cite[Lemma~3]{BouGlib} and \cite[Lemma~2.4]{RoLi}.
\begin{lemma}
\label{lem:pivotting}
Let $X \subset \mathbb{F}_q$ and let $X^{'}$ be any subset of $X$ with $|X^{'}| \approx |X|$. If $|R(X)| \gg |X|^2$, then there exists $r\in R(X)$, such that $|X^{'} + rX^{'}| \gg |X|^2$. If $|X| >q^{1/2}$, then there exists $r \in \F_q^{*}$ such that $|X^{'} + rX^{'}| \gg q$.
\end{lemma}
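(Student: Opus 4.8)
The plan is to carry out the familiar ``pivoting'' pigeonhole argument on twisted additive energies. For $r \in \F_q$ set
\[
E_r \;=\; \big|\{(a,b,c,d)\in (X')^4 : a+rb = c+rd\}\big|,
\]
and observe that, writing $f_r(s) = |\{(a,b)\in X'\times X' : a + rb = s\}|$, the Cauchy--Schwarz inequality gives
\[
|X'|^4 = \Big(\sum_s f_r(s)\Big)^2 \le |X' + rX'|\sum_s f_r(s)^2 = |X' + rX'|\,E_r,
\]
so that $|X' + rX'| \ge |X'|^4/E_r$. It therefore suffices to locate $r$ in the appropriate set for which $E_r$ is small --- of order $|X'|^4/|X|^2 \approx |X|^2$ in the first case, and of order $|X'|^4/q$ in the second.

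For the first assertion I would bound $\sum_{r\in R(X)} E_r$. Rewriting the equation defining $E_r$ as $a - c = r(d-b)$: the quadruples with $d = b$ force $a = c$ and contribute $|X'|^2$ to $E_r$ for every $r$, while each quadruple with $d\ne b$ pins down the single value $r = (a-c)/(d-b)$, which lies in $R(X)$ since $a-c$ and $d-b$ are differences of elements of $X$. Hence
\[
\sum_{r\in R(X)} E_r \;\le\; |R(X)|\,|X'|^2 + |X'|^4 ,
\]
so the average of $E_r$ over $r\in R(X)$ is at most $|X'|^2 + |X'|^4/|R(X)|$, which is $\ll |X'|^2$ because $|R(X)| \gg |X|^2 \approx |X'|^2$. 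Choosing $r\in R(X)$ with $E_r$ no larger than this average yields $|X' + rX'| \ge |X'|^4/E_r \gg |X'|^2 \approx |X|^2$, as required.

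For the second assertion, Lemma~\ref{lem:RBFq} gives $R(X) = \F_q$ whenever $|X| > q^{1/2}$, and I would instead average over $r\in\F_q^{*}$. Now a quadruple with $d\ne b$ contributes to $\sum_{r\in\F_q^{*}}E_r$ only when $a\ne c$ as well (otherwise the associated value of $r$ equals $0$), while the quadruples with $d=b$ contribute $|X'|^2$ to every $r\in\F_q^{*}$; thus
\[
\sum_{r\in\F_q^{*}} E_r \;\le\; (q-1)|X'|^2 + |X'|^4 .
\]
Since $|X|>q^{1/2}$ and $|X'|\approx |X|$ we have $|X'|^2 \gg q$, so the right-hand side is $\ll |X'|^4$, whence some $r\in\F_q^{*}$ satisfies $E_r \ll |X'|^4/q$ and therefore $|X'+rX'| \ge |X'|^4/E_r \gg q$.

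The only delicate point I anticipate is the constant bookkeeping concealed in the symbols $\approx$ and $\gg$: one should check that the $r$ delivered by the averaging is automatically nonzero --- which it is, since $E_0 = |X'|^3$ lies well above the average once $|X'|$ is large, and the statement is trivial otherwise --- and that the inequality $|X'|^2 \gtrsim q$ really does follow from $|X| > q^{1/2}$ together with $|X'| \approx |X|$. Neither of these is a genuine obstacle, so the argument should go through essentially as written.
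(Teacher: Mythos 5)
Your proof is correct, and it reproduces the standard ``pivoting'' argument --- Cauchy--Schwarz on the twisted additive energy $E_r$, followed by averaging $E_r$ over $r \in R(X)$ (resp.\ $r \in \F_q^*$) --- that underlies the two sources the paper cites for this lemma, namely \cite[Lemma~3]{BouGlib} and \cite[Lemma~2.4]{RoLi}; the paper itself records no proof. The two delicate points you flag (that the averaging automatically avoids $r=0$ because $E_0=|X'|^3$ exceeds the target average, and that $|X|>q^{1/2}$ together with $|X'|\approx|X|$ forces $|X'|^2\gg q$) are both handled correctly.
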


Next, we recall a covering lemma, which can be found in~\cite{Shen}.
\begin{lemma}
\label{lem:ShenCovering}
Let $X, Y \subseteq \F_q$. Then, for any $0 < \epsilon < 1$, there exists a constant $C(\epsilon)$, such that at least $(1-\epsilon)|X|$ elements of $X$ can be covered by 
\[
C(\epsilon)\cdot \min\bigg\{\frac{|X+Y|}{|Y|}, \frac{|X-Y|}{|Y|}\bigg\}
\]
translates of $Y$.
\end{lemma}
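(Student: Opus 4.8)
The plan is to prove this by a standard greedy covering argument resting on one pigeonhole estimate. Write $K := \min\{|X+Y|/|Y|,\ |X-Y|/|Y|\}$, and note $K\ge 1$, since $x_0-Y\subseteq X-Y$ for any $x_0\in X$. By replacing $Y$ with $-Y$ if necessary — this interchanges $X+Y$ and $X-Y$, so that a covering by translates of $-Y$ in that case plays the role of a covering by translates of $Y$ — I may assume $K=|X-Y|/|Y|$.

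First I would record the basic step: for every nonempty $X'\subseteq X$ there is an element $v\in\F_q$ with $|X'\cap(v+Y)|\ge |X'|/K$. Indeed $v+Y$ meets $X'$ only when $v\in X'-Y$, and summing over such $v$ gives $\sum_{v\in X'-Y}|X'\cap(v+Y)| = |X'|\,|Y|$, because each pair $(x,y)\in X'\times Y$ contributes exactly once, at $v=x-y$. Hence the largest term is at least the average $|X'|\,|Y|/|X'-Y|$, and $|X'-Y|\le |X-Y| = K|Y|$.

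Next I would iterate. Set $X_0=X$; given $X_j$ with $|X_j|>\epsilon|X|$, pick $v_{j+1}$ as above and put $X_{j+1}=X_j\setminus(v_{j+1}+Y)$, so that $|X_{j+1}|\le (1-1/K)|X_j|$. After $m$ steps,
\[
|X_m|\le \Big(1-\tfrac1K\Big)^m |X|\le e^{-m/K}|X|,
\]
which is at most $\epsilon|X|$ as soon as $m\ge K\log(1/\epsilon)$. Taking $m=\lceil K\log(1/\epsilon)\rceil\le (\log(1/\epsilon)+1)K$, the translates $v_1+Y,\dots,v_m+Y$ cover at least $(1-\epsilon)|X|$ points of $X$, giving the claim with $C(\epsilon)=\log(1/\epsilon)+1$.

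I do not expect a genuine obstacle here; the only points to watch are (i) the reduction allowing one to replace the $\min$ by $|X-Y|/|Y|$ while still covering by translates of (a set congruent to) $Y$, and (ii) checking that the contraction ratio $1-1/K$ depends only on $K$, not on $|X|$, so that the number of translates is truly $O_\epsilon(K)$ with no spurious $\log|X|$ factor creeping in.
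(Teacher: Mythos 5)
The paper does not give a proof of this lemma; it cites it from Shen's paper, so there is no in-text argument to compare against. Your greedy iteration is fine and is the standard one, but the opening reduction is exactly where the real gap sits --- precisely the point you flag yourself under item (i). After replacing $Y$ by $-Y$, your argument covers most of $X$ by translates of $-Y$, and that is \emph{not} a covering by translates of $Y$ unless $Y$ is symmetric. The lemma asks for translates of $Y$ in both regimes, and the paper genuinely uses the $|X+Y|$ branch with translates of $Y$: in Claim~\ref{app:Covering} one takes $X=b_iC$ and $Y=b_iS_{b_i}\subseteq B$, with the quantity $|X+Y|/|Y|$, and the covering must be by sets contained in translates of $B$; translates of $-Y\subseteq -B$ would not serve. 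So the reduction ``assume $K=|X-Y|/|Y|$'' does not close.

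The fix is to make your basic pigeonhole step symmetric in $\pm$ by averaging only over translates of the special form $x_0-y_0+Y$ with $(x_0,y_0)\in X'\times Y$. One computes
\[
\sum_{(x_0,y_0)\in X'\times Y} \bigl|X'\cap(x_0-y_0+Y)\bigr| \;=\; \bigl|\{(x_0,y_0,x,y)\in (X'\times Y)^2 : x-y=x_0-y_0\}\bigr| \;=\; E_{+}(X',Y),
\]
and the constraint $x-y=x_0-y_0$ can be read either as an equality of differences (support $X'-Y$) or, after rearranging to $x+y_0=x_0+y$, as an equality of sums (support $X'+Y$); Cauchy--Schwarz then gives $E_{+}(X',Y)\geq (|X'||Y|)^2/|X'\pm Y|$ in both forms. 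Hence some $v=x_0-y_0$ satisfies $|X'\cap(v+Y)|\geq |X'||Y|/\min\{|X'+Y|,|X'-Y|\}\geq |X'|/K$, and $v+Y$ is an honest translate of $Y$. Feeding this into your iteration recovers the lemma as stated, with the same $C(\epsilon)\approx\log(1/\epsilon)+1$. Your concern (ii) is unfounded: the contraction ratio $1-1/K$ depends only on $K$, so no spurious $\log|X|$ enters.
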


The following two lemmas provide well-known variants of the Pl\"{u}nnecke-Ruzsa inequality. Both lemmas also appear in \cite{KatzShe}.
\begin{lemma}
\label{lem:PlRu}
Let $X, Y_1, \dots, Y_k \subset \F_q$. Then 
 \begin{displaymath}
|Y_1 + \cdots + Y_k| \leq \frac{|X + Y_1| \cdots |X + Y_k|}{|X|^{k-1}}.
 \end{displaymath}
\end{lemma}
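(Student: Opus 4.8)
This is the (asymmetric) Pl\"{u}nnecke--Ruzsa inequality, a standard fact of additive combinatorics; since it is used here only as a tool, the cleanest option is to quote it, e.g.\ from \cite{KatzShe} (compare also \cite[Corollary~6.29]{TaoVu}). If one wants a self-contained argument, the plan is to reproduce Petridis's proof, which has two ingredients.

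The first ingredient is Petridis's fundamental lemma: given a finite nonempty set $X$ and a finite set $B$, put $K = \min\{|X''+B|/|X''| : \emptyset \neq X'' \subseteq X\}$ and fix a nonempty $X' \subseteq X$ at which this minimum is attained; then
\[
|X' + B + C| \leq K\,|X' + C| \qquad \text{for every finite set } C.
\]
I would prove this by induction on $|C|$. The base case $|C| = 1$ is the defining equality $|X'+B| = K|X'|$. For the inductive step write $C = C_0 \cup \{c\}$ with $c \notin C_0$; the key observation is that the elements of $X' + B + C$ not already lying in $X'+B+C_0$ all come from $(X' \setminus Z) + B + c$, where $Z = \{x \in X' : x+c \in X'+C_0\}$, and one bounds the size of this new contribution against the new contribution to $X' + C$ using the minimality of $K$ on the subset $X' \setminus Z$. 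Carrying this set-covering/defect count through correctly is the technical heart of the whole proof.

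The second ingredient is an induction on $k$. For $k = 1$ the claim is just $|Y_1| \leq |X + Y_1|$, which holds since $X$ is nonempty. For the inductive step one peels off $Y_k$: applying the fundamental lemma with $B = Y_k$ produces a nonempty minimal-ratio subset $X' \subseteq X$ with $K \leq |X+Y_k|/|X|$ and $|X' + Y_k + C| \leq K|X'+C|$ for all $C$; choosing $C = Y_1 + \cdots + Y_{k-1}$ gives
\[
|Y_1 + \cdots + Y_k| \;\leq\; |X' + Y_1 + \cdots + Y_k| \;\leq\; K\,|X' + Y_1 + \cdots + Y_{k-1}|,
\]
and one then invokes the inductive hypothesis on the configuration $(X', Y_1, \ldots, Y_{k-1})$ and uses $K|X'| \leq |X'+Y_k| \leq |X+Y_k|$ to collapse the resulting product to $|X+Y_1|\cdots|X+Y_k| / |X|^{k-1}$.

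The main obstacle is making the induction on $k$ close without leaking a constant: each step replaces the pivot $X$ by a proper subset $X'$, so the inductive statement has to be set up carefully -- one carries the sharper bound $\frac{|X+Y_1|\cdots|X+Y_k|}{|X|^{k}}\,|X'|$ for a \emph{free} subset $X' \subseteq X$, and one genuinely needs the fundamental lemma in its full ``for all $C$'' form rather than a special case of it. The proof of the fundamental lemma is the other delicate point. If writing this out in full becomes cumbersome, simply quoting the inequality from \cite{KatzShe} -- or running the classical layered-graph argument based on Menger's theorem -- yields the same bound.
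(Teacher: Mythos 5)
The paper does not prove this lemma at all: it simply remarks that both this and the next lemma ``appear in \cite{KatzShe}'', so the ``proof'' is just a citation. Your first and last suggestion --- quote the standard fact from \cite{KatzShe} or \cite{TaoVu} --- therefore coincides exactly with what the author does, and is the right call for a tool lemma like this.

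The Petridis-style sketch you offer as an alternative is the right idea in outline, but the inductive step as you have written it does not close. After you pass to the minimising subset $X' \subseteq X$ for $Y_k$ and peel off one factor via Petridis's lemma, you propose to ``invoke the inductive hypothesis on $(X', Y_1, \ldots, Y_{k-1})$.'' That hypothesis produces ratios of the form $|X'+Y_i|/|X'|$, and since $X'$ is a proper subset of $X$ these can be \emph{larger} than $|X+Y_i|/|X|$ --- $|X'+Y_i| \le |X+Y_i|$ is fine, but the denominator shrinks too, and in the wrong direction. You correctly flag this as ``the main obstacle'' and propose strengthening the induction to carry a free subset $X' \subseteq X$ with $|X'+Y_1+\cdots+Y_k| \le \prod_i (|X+Y_i|/|X|)\cdot|X'|$; but the same problem recurs there: to supply a Petridis minimiser for the last $Y_k$ one must minimise over subsets of the already-shrunken ambient set coming out of the $(k-1)$-case, and one then loses the comparison $K \le |X+Y_k|/|X|$, since that bound is only automatic when the minimisation is taken over all subsets of the original $X$. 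So the induction does not close in the form you describe. A rigorous Petridis-style proof of the asymmetric Pl\"unnecke--Ruzsa inequality does exist (Petridis, 2012, Theorem 3.1), but it is set up more carefully than the two-line induction you sketch; the classical route via Pl\"unnecke's layered commutative graphs and Menger's theorem, as in \cite{TaoVu}, also works and is probably the cleanest to reproduce if one insists on self-containment. For the purposes of this paper, simply citing \cite{KatzShe} as the author does is entirely appropriate.
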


\begin{lemma}
\label{lem:PlRuRefined}
Let $X, Y_1, \dots, Y_k \subset \F_q$. For any $0 < \epsilon < 1$, there exists a subset $X^{'} \subseteq X$, with $|X^{'}| \geq (1-\epsilon)|X|$ such that 
 \begin{displaymath}
|X^{'} +Y_1 + \cdots + Y_k| \ll_{\epsilon} \frac{|X + Y_1| \cdots |X + Y_k|}{|X|^{k-1}}. 
\end{displaymath}
\end{lemma}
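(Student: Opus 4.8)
The plan is to obtain this from the usual Pl\"{u}nnecke--Ruzsa machinery, namely Petridis's magnification--ratio lemma together with an iterative passage to a large subset of $X$; the latter is the only genuinely nontrivial point, since the non-refined bound is already available as Lemma~\ref{lem:PlRu}. Recall that Petridis's argument shows: for finite sets $Z,Y\subseteq\F_q$ there is a nonempty $Z^{*}\subseteq Z$ realising the minimal magnification ratio $K=\min_{\emptyset\neq W\subseteq Z}|W+Y|/|W|$ such that $|Z^{*}+Y+C|\leq K|Z^{*}+C|$ for every finite $C$. Iterating this over $Y_1,\dots,Y_k$ (reconciling the subsets produced at successive stages with the Ruzsa triangle inequality, exactly as in the proof of Lemma~\ref{lem:PlRu}) yields, for any given finite $X$, a nonempty $X_{0}\subseteq X$ with
\[
|X_{0}+Y_{1}+\cdots+Y_{k}|\;\leq\;\prod_{i=1}^{k}\frac{|X+Y_{i}|}{|X|}\cdot|X_{0}|\;\leq\;\frac{|X+Y_{1}|\cdots|X+Y_{k}|}{|X|^{k-1}}\;=:\;M .
\]
Write $S=Y_{1}+\cdots+Y_{k}$, so the above reads $|X_{0}+S|\leq M$.

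First I would run a peeling procedure. Set $X^{(0)}=X$; having defined $X^{(j)}$, if $|X^{(j)}|\geq\epsilon|X|$ apply the previous paragraph with $X^{(j)}$ in place of $X$ to extract a nonempty $W_{j+1}\subseteq X^{(j)}$ with
\[
|W_{j+1}+S|\;\leq\;\frac{|X^{(j)}+Y_{1}|\cdots|X^{(j)}+Y_{k}|}{|X^{(j)}|^{k-1}}\;\leq\;\frac{|X+Y_{1}|\cdots|X+Y_{k}|}{(\epsilon|X|)^{k-1}}\;=\;\epsilon^{-(k-1)}M ,
\]
where I used $X^{(j)}\subseteq X$ and $|X^{(j)}|\geq\epsilon|X|$; then put $X^{(j+1)}=X^{(j)}\setminus W_{j+1}$ and stop at the first index $m$ with $|X^{(m)}|<\epsilon|X|$. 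Setting $X'=W_{1}\cup\cdots\cup W_{m}$ gives $|X'|\geq(1-\epsilon)|X|$, and since $U\mapsto|U+S|$ is subadditive under unions,
\[
|X'+Y_{1}+\cdots+Y_{k}|\;=\;|X'+S|\;\leq\;\sum_{j=1}^{m}|W_{j}+S|\;\leq\;m\,\epsilon^{-(k-1)}M .
\]
Thus the whole statement reduces to bounding the number of rounds $m$ by a quantity depending only on $\epsilon$ and $k$.

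The hard part is precisely this last reduction: Petridis's lemma only guarantees that each $W_{j}$ is nonempty, so a crude count gives the useless bound $m\leq|X|$. One way around this is to take $W_{j+1}$ to be the \emph{maximal} subset of $X^{(j)}$ attaining the minimal magnification ratio with respect to $Y_{1}$ — this maximal set is well defined because the family of ratio-minimising subsets is closed under unions, a consequence of the submodularity of $W\mapsto|W+Y_{1}|$ — so that the minimal ratios strictly increase from round to round while remaining bounded by $|X+Y_{1}|/(\epsilon|X|)$, and then a more careful accounting of the masses $|W_{j}|$ removed at each round caps $m$ at $O_{\epsilon,k}(1)$. A cleaner alternative, and the one I would actually prefer, is to bypass the peeling altogether and invoke Pl\"{u}nnecke's original layered-graph argument: one builds the graph on the layers $X,\ X+Y_{1},\ X+Y_{1}+Y_{2},\dots$, and an application of Menger's theorem produces a family of vertex-disjoint paths covering all but an $\epsilon$-fraction of the top layer, which directly exhibits the desired $X'$ and the bound $|X'+Y_{1}+\cdots+Y_{k}|\ll_{\epsilon}M$; this is essentially the argument in~\cite{KatzShe}. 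Either route completes the proof.
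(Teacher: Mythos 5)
The paper does not prove Lemma~\ref{lem:PlRuRefined}; it refers the reader to~\cite{KatzShe}. Your peeling strategy is essentially the Katz--Shen approach, and the needed ingredient is exactly the one you write down in the display ``$|X_0 + Y_1 + \cdots + Y_k| \leq \prod_i (|X+Y_i|/|X|)\cdot|X_0|$'' --- but you then discard the $|X_0|$ factor by crudely bounding $|X_0|\leq|X|$, and it is precisely this discarded factor that closes the argument. There is no need to bound the number of rounds $m$ at all. Keeping the factor, each round gives a nonempty $W_{j+1}\subseteq X^{(j)}$ with
\[
|W_{j+1}+Y_1+\cdots+Y_k|\;\leq\;\prod_{i=1}^{k}\frac{|X^{(j)}+Y_i|}{|X^{(j)}|}\cdot|W_{j+1}|\;\leq\;\epsilon^{-k}\,\frac{|X+Y_1|\cdots|X+Y_k|}{|X|^{k}}\cdot|W_{j+1}|,
\]
and since the $W_j$ are pairwise disjoint with $\sum_j|W_j|=|X'|\leq|X|$, summing over $j$ yields $|X'+Y_1+\cdots+Y_k|\leq\sum_j|W_j+Y_1+\cdots+Y_k|\leq\epsilon^{-k}\,|X+Y_1|\cdots|X+Y_k|/|X|^{k-1}$ regardless of how large $m$ is. So the ``hard part'' you identify is an artifact of the weakened bound, not a genuine obstacle.

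As written, your proof has a gap: after discarding the $|W_{j+1}|$-proportional factor, the argument stalls at ``bound $m$,'' and neither of the two rescue routes you sketch actually does so. The submodularity remark gives strictly increasing minimal magnification ratios, but that alone does not bound the number of rounds (the ratios could increase arbitrarily slowly, and nothing forces the removed masses to be large). The appeal to Plünnecke's layered graph and Menger's theorem is also not a correct description of either Plünnecke's original proof or of the argument in~\cite{KatzShe}: the graph-theoretic machinery produces the magnification-ratio inequality for a single nonempty subset, not a vertex-disjoint path system covering a $(1-\epsilon)$-fraction of anything. One further point worth flagging: the existence of a single nonempty $Z\subseteq X$ with $|Z+Y_1+\cdots+Y_k|\leq\bigl(\prod_i|X+Y_i|/|X|\bigr)|Z|$ for distinct summands $Y_1,\dots,Y_k$ is genuinely a theorem (Ruzsa; see also Petridis), and it does not follow from Lemma~\ref{lem:PlRu} nor from naively iterating Petridis's single-summand lemma over nested subsets, since the intermediate magnification ratios are not controlled relative to $X$. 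You should cite the multi-summand version explicitly rather than treat it as a routine iteration.
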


For any nonempty sets $X, Y$ in an abelian group and any set $G\subseteq X \times Y$, we define the partial difference set of $X$ and $Y$ as 
\[
X \overset{G}{-} Y = \{x - y: (x, y) \in G \}.
\]
This notation is extended to other operations in a similar way. We recall two different formulations of the Balog-Szemer\'{e}di-Gowers theorem. Lemma~\ref{lem:BG-BSG} below is due to Bourgain and Garaev~\cite{BouGar}.
\begin{lemma}
\label{lem:BG-BSG}
Let $X,Y$ be subsets of an abelian group and $G\subseteq X \times Y$. Then, there exists $X^{'}\subseteq X$ with $|X^{'}| \gg |G|/|Y|$ such that
\begin{equation*}
|X^{'} - X^{'}| \ll \frac{|X \overset{G}{-} Y|^{4} |X|^4 |Y|^3}{|G|^5}.
\end{equation*}
\end{lemma}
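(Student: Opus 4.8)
The plan is to prove this lemma as an instance of the Balog--Szemer\'edi--Gowers theorem phrased in the language of bipartite graphs, following the standard graph-theoretic argument (Gowers' proof of BSG together with the ``dependent random choice'' device). Write $S = X \overset{G}{-} Y$, regard $G$ as a bipartite graph between $X$ and $Y$, and for $x \in X$ let $N(x) \subseteq Y$ be its neighbourhood. The proof splits into two parts: (A) extracting a large $X' \subseteq X$ in which \emph{every} pair of vertices is joined by many paths of length $4$ inside $G$, and (B) converting such path-richness into the stated bound on $|X' - X'|$.

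For part (B), observe that a length-$4$ path $x \sim y \sim x'' \sim y' \sim x'$ in $G$ (with $x, x'', x' \in X$ and $y, y' \in Y$) yields the identity
\[
x - x' = (x - y) - (x'' - y) + (x'' - y') - (x' - y'),
\]
expressing $x - x'$ as $s_1 - s_2 + s_3 - s_4$ with each $s_i \in S$. Moreover, once the endpoints $x$ and $x'$ are fixed, the quadruple $(s_1, s_2, s_3, s_4)$ determines the path (one recovers $y$ from $s_1$, then $x''$ from $s_2$, then $y'$ from $s_3$), so the number of representations of $x - x'$ in this form is at least the number $M$ of such paths. Since there are only $|S|^4$ quadruples in $S^4$ in total, and every element of $X' - X'$ arises from at least $M$ of them, we obtain $|X' - X'| \le |S|^4 / M$. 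Comparing with the target bound, it therefore suffices to produce $X'$ with $|X'| \gg |G|/|Y|$ and $M \gg |G|^5/(|X|^4 |Y|^3)$; one checks on the complete bipartite graph that these exponents are precisely what the length-$4$ path count can deliver.

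For part (A), I would first clean the graph by a dyadic pigeonholing, passing to $X_1 \subseteq X$ on which all degrees are comparable to a single value $d^{*}$ and which retains $\gtrsim |G|$ edges (the logarithmic loss here being absorbed by slack in the argument, or avoided by a weighted variant), so that $|X_1| d^{*} \approx |G|$. Writing $c(x, z) = |N(x) \cap N(z)|$ for $x, z \in X_1$, the Cauchy--Schwarz inequality gives $\sum_{x, z \in X_1} c(x, z) = \sum_{y} d_{X_1}(y)^2 \gg |G|^2/|Y|$, while $c(x,z) \le 2 d^{*}$. A dependent-random-choice argument --- threshold $c$ at roughly half its average value $\lambda \approx |G|^2/(|Y| |X_1|^2)$, and then select a suitable pivot by averaging --- produces $X' \subseteq X_1$ with $|X'| \gtrsim |G|/|Y|$ such that every pair $x, x' \in X'$ has $\gtrsim |X_1|$ vertices $z$ with $c(x, z), c(z, x') \ge \lambda$; since the number of length-$4$ paths from $x$ to $x'$ equals $\sum_{z \in X_1} c(x, z)\, c(z, x') \ge |X_1| \lambda^2$, a short computation shows this is $\gg |G|^5/(|X|^4|Y|^3)$, which combined with part (B) finishes the proof.

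The main obstacle is the extraction in part (A): one must obtain the ``constant-fraction'' size $|X'| \gg |G|/|Y| = \alpha|X|$, where $\alpha$ denotes the edge density, so that \emph{no} polynomial-in-$\alpha$ loss is permitted, and this must happen \emph{simultaneously} with the uniform-over-all-pairs path-richness. This is exactly the point at which the preliminary degree-regularization and the thresholding at (half) the average value are forced, and it is the delicate heart of the BSG argument; the remaining steps are routine Cauchy--Schwarz together with bookkeeping of the exponents $4, 4, 3, 5$. Alternatively, one may simply invoke the argument of Bourgain and Garaev~\cite{BouGar} verbatim.
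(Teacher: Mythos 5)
The paper does not actually prove this lemma: it is quoted verbatim from Bourgain and Garaev \cite{BouGar}, so there is no internal argument to compare yours against. Your sketch reproduces the standard dependent-random-choice proof of the Balog--Szemer\'{e}di--Gowers theorem and is sound in outline; in particular Part (B), converting a count $M$ of length-$4$ paths into $|X'-X'|\le |S|^{4}/M$ via the injectivity of the map from paths to quadruples $(s_1,s_2,s_3,s_4)\in S^{4}$, is exactly right. Two soft spots in Part (A) deserve attention. First, the dyadic degree-regularization is both unnecessary and lossy: it retains only $\gg |G|/\log |X|$ of the edges, and the lemma contains no logarithmic factor to absorb this; all the argument needs is the Cauchy--Schwarz bound $\sum_{x,z}|N(x)\cap N(z)|=\sum_{y}d(y)^{2}\ge |G|^{2}/|Y|$, which holds on the full graph (at most one should delete vertices of degree below $|G|/(2|X|)$, which costs only a constant fraction of edges). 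Second, the claim that every pair $x,x'\in X'$ admits $\gg |X_1|$ intermediate vertices $z$ with $c(x,z),c(z,x')\ge \lambda$ is too strong and fails in general (consider a disjoint union of complete bipartite blocks, where admissible $z$ must lie in the block of $x$ and $x'$); the dependent-random-choice step only guarantees $\gg |G|/|Y|$ such $z$, namely most of the pruned neighbourhood of the chosen pivot in $Y$. This weaker count still suffices, since $(|G|/|Y|)\cdot \lambda^{2}$ with $\lambda \approx |G|^{2}/(|X|^{2}|Y|)$ equals $|G|^{5}/(|X|^{4}|Y|^{3})$, which is precisely the path count needed in Part (B). With these two repairs your argument gives the lemma as stated.
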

See \cite[Theorem~2.29]{TaoVu} for a proof of the following formulation.
\begin{lemma}
\label{lem:BSG3P}
Let $X,Y$ be subsets of an abelian group and $G\subseteq X \times Y$. Then, there exist subsets $X^{'} \subseteq X$ and $Y^{'} \subseteq Y$ with
\begin{equation*}
|X^{'}| \gg \frac{|G|}{|Y|}\quad \text{and} \quad |Y^{'}| \gg \frac{|G|}{|X|}
\end{equation*}
such that
\[
|X^{'} + Y^{'}| \ll \frac{|X \overset{G}{+} Y|^{3} |X|^4 |Y|^4}{|G|^5}.
\]
\end{lemma}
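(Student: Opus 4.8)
The plan is to reduce the statement to the graph-theoretic form of the Balog--Szemer\'{e}di--Gowers theorem and then to read off the sumset bound by a representation count. Throughout I would regard $G$ as the edge set of a bipartite graph with vertex classes $X$ and $Y$, write $\alpha = |G|/(|X||Y|)$ for its edge density, and abbreviate $S = X \overset{G}{+} Y$.

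\textbf{Step 1: a graph statement.} I would first prove the following: there exist $X' \subseteq X$ and $Y' \subseteq Y$ with $|X'| \gg \alpha |X|$ and $|Y'| \gg \alpha |Y|$ such that every pair $(x,y) \in X' \times Y'$ is joined by $\gg \alpha^5 |X||Y|$ walks $x \sim y_1 \sim x_1 \sim y$ of length three in $G$, that is, with $(x,y_1),(x_1,y_1),(x_1,y) \in G$. The engine here is a codegree count together with dependent random choice. By Cauchy--Schwarz applied to the $Y$-degrees, $\sum_{x,x' \in X} |N_G(x) \cap N_G(x')| = \sum_{y} \deg_G(y)^2 \ge |G|^2/|Y| = \alpha^2 |X|^2 |Y|$, so a typical pair in $X \times X$ has codegree $\gg \alpha^2 |Y|$ in $Y$. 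Selecting a random common neighbourhood $X_1 = N_G(y_0)$ for a suitable $y_0 \in Y$ retains a positive proportion, in the appropriate weighted sense, of the high-codegree pairs while keeping $|X_1| \gg \alpha|X|$; a symmetric second selection inside $X_1$ produces $Y'$, and discarding the vertices of $X_1$ that interact badly with $Y'$ leaves $X'$. Each surviving $x$ then has $\gg \alpha |Y|$ neighbours $y_1$, and each such $y_1$ has $\gg \alpha^2 |X|$ common neighbours with any fixed $y \in Y'$, which, after absorbing the losses from the selections, yields the claimed count of $\gg \alpha^5 |X||Y|$ walks for every $(x,y) \in X' \times Y'$.

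\textbf{Step 2: from walks to the sumset bound.} Fix $z \in X' + Y'$ and a representation $z = x + y$ with $x \in X'$, $y \in Y'$. Every walk $x \sim y_1 \sim x_1 \sim y$ counted above gives the identity
\[
z \;=\; x + y \;=\; (x + y_1) - (x_1 + y_1) + (x_1 + y),
\]
and since $(x,y_1),(x_1,y_1),(x_1,y) \in G$, all three summands lie in $S$. The map sending a walk $(y_1,x_1)$ to the triple $(x+y_1,\,x_1+y_1,\,x_1+y) \in S^3$ is injective for fixed $(x,y)$, because the triple together with $x$ and $y$ determines $y_1$ and then $x_1$. Hence $z$ admits $\gg \alpha^5|X||Y|$ representations of the shape $s_1 - s_2 + s_3$ with $s_i \in S$. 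Distinct elements $z$ use disjoint sets of such triples, and there are only $|S|^3$ triples in all, so $|X' + Y'| \cdot \alpha^5 |X||Y| \ll |S|^3$. Substituting $\alpha = |G|/(|X||Y|)$ rearranges this to $|X'+Y'| \ll |X\overset{G}{+}Y|^3 |X|^4 |Y|^4 / |G|^5$, which together with the size bounds $|X'| \gg \alpha|X| = |G|/|Y|$ and $|Y'| \gg \alpha|Y| = |G|/|X|$ is the assertion.

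\textbf{Main obstacle.} The delicate point lies entirely in Step 1: arranging the two dependent random choices so that all four requirements hold simultaneously, namely $|X'|$ and $|Y'|$ remaining of order $\alpha|X|$ and $\alpha|Y|$ while the walk count stays $\gg \alpha^5|X||Y|$ for \emph{every} pair in $X' \times Y'$ rather than merely for most of them. Pushing the exponent of $\alpha$ down to exactly $5$ requires choosing the codegree thresholds and the fractions of discarded vertices with some care, and it is here that the argument is genuinely technical; the passage in Step 2 from the graph conclusion to the stated inequality is then purely formal.
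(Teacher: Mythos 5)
Your proposal is correct and takes the same route as the reference the paper cites for this lemma (Tao--Vu, Theorem~2.29): a dependent-random-choice path-of-length-three lemma, followed by the representation count $z = (x+y_1)-(x_1+y_1)+(x_1+y)$ with each summand in $X\overset{G}{+}Y$. The arithmetic matches: the exponent $\alpha^{5}$ in the path count, with $\alpha = |G|/(|X||Y|)$, yields exactly the stated bound $|X'+Y'|\ll |X\overset{G}{+}Y|^{3}|X|^{4}|Y|^{4}/|G|^{5}$ and the cardinality lower bounds $|X'|\gg |G|/|Y|$, $|Y'|\gg |G|/|X|$.
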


The following lemma is due to Bourgain~\cite{Bou1}. A proof is also provided in \cite[Lemma~8]{Jones}.
\begin{lemma}
\label{lem:BourgainIntersection}
Let $X, Y \subset \F_q$ and let $M = \max_{y\in Y} |X + yX|$. Then there exist elements $x_1, x_2, x_3 \in X$ such that 
\[
|(X - x_1) \cap (x_2 - x_3) Y| \gg \frac{|Y||X|}{M}.
\]
\end{lemma}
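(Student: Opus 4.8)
The plan is to prove the lemma by a double-counting argument built on the additive energy of the dilates $yX$, $y\in Y$, followed by a pigeonhole step.

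First I would fix $y\in Y$ and apply the Cauchy--Schwarz inequality to the fibres of the map $(a,b)\mapsto a+yb$ on $X\times X$: grouping the $|X|^2$ pairs by their common value $a+yb$, which ranges over a subset of $X+yX$ of size at most $M$, the number of quadruples $(a,b,c,d)\in X^4$ with $a+yb=c+yd$ is at least $(|X|^2)^2/M=|X|^4/M$. Summing over $y\in Y$, the quantity
\[
N:=\bigl|\{(a,b,c,d,y)\in X^4\times Y:\ a+yb=c+yd\}\bigr|
\]
satisfies $N\ge |Y|\,|X|^4/M$.

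Next I would rewrite the defining equation as $a-c=(d-b)y$ and organise $N$ by the triple $(c,d,b)$. When $d\neq b$ the map $y\mapsto (d-b)y$ is a bijection, so the number of pairs $(a,y)\in X\times Y$ with $a-c=(d-b)y$ is exactly $|(X-c)\cap (d-b)Y|$; when $d=b$ the equation forces $a=c$ and leaves $y$ free, contributing $|X|^2|Y|$ to $N$ in total. Hence
\[
N=|X|^2|Y|+\sum_{\substack{b,c,d\in X\\ b\neq d}}|(X-c)\cap (d-b)Y|,
\]
and since this sum has fewer than $|X|^3$ terms there is, after relabelling, a triple $x_1,x_2,x_3\in X$ with $x_2\neq x_3$ and
\[
|(X-x_1)\cap (x_2-x_3)Y|\ \ge\ \frac{N-|X|^2|Y|}{|X|^3}\ \ge\ \frac{|Y|\,|X|}{M}-\frac{|Y|}{|X|}\ =\ \frac{|Y|\,|X|}{M}\Bigl(1-\frac{M}{|X|^2}\Bigr).
\]

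I expect the main obstacle to be the diagonal term $|X|^2|Y|$: the inequality above is only useful once $M$ is bounded away from $|X|^2$, say $M\le |X|^2/2$, and in that range it already gives $|(X-x_1)\cap (x_2-x_3)Y|\ge \tfrac12|Y|\,|X|/M$, which is the claim. In the complementary range $M>|X|^2/2$ the target bound is only of size $O(|Y|/|X|)$ --- indeed $O(1)$ when $|Y|\lesssim |X|$ --- and there the trivial choice $x_2=x_3$, for which $(x_2-x_3)Y=\{0\}\subseteq X-x_1$ so that the intersection has size $1$, suffices. Thus the whole argument reduces to the energy estimate of the first step and the bookkeeping that peels off the diagonal contribution; everything else is pigeonhole.
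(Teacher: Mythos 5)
Your proof is correct and, as far as I can tell, follows exactly the route one would find in the cited sources: the paper itself does not prove this lemma but refers to Bourgain \cite{Bou1} and to \cite[Lemma~8]{Jones}, and the standard argument there is precisely this Cauchy--Schwarz energy bound for the counting function $N$, the reorganisation of $a+yb=c+yd$ as $a-c=(d-b)y$, and a pigeonhole over the $|X|^3$ triples $(b,c,d)$ with $b\neq d$. Your bookkeeping is clean: the diagonal $b=d$ contributes exactly $|X|^2|Y|$, and the remainder pigeonholes to
\[
\max_{b\neq d,\,c}\ |(X-c)\cap(d-b)Y|\ \ge\ \frac{|Y||X|}{M}\Bigl(1-\frac{M}{|X|^2}\Bigr),
\]
which gives the claim with an absolute constant whenever $M\le |X|^2/2$.

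Your discussion of the complementary range $M>|X|^2/2$ is, if anything, more careful than what one usually sees written down. You are right to be cautious there: the fallback $x_2=x_3$ only yields $|(X-x_1)\cap\{0\}|=1$, which matches the target $\gg |Y||X|/M$ only when $|Y|\lesssim|X|$. In fact the lemma as stated (with no side condition) is literally false in the regime $M\approx|X|^2$ and $|Y|\gg|X|$: take $X=\{0,1\}$ in $\F_p$ and $Y=\F_p^*$, so $M=4$ but every intersection $|(X-x_1)\cap(x_2-x_3)Y|\le 2$, while $|Y||X|/M\approx p/2$. So this is a (minor, well-known) imprecision in the statement rather than a gap in your argument; the lemma is used in the paper only where the claimed lower bound is comfortably below $\min\{|X|,|Y|\}$, which is exactly the regime your main computation covers. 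In short: correct, essentially the same approach as the source, and the one caveat you flag is a genuine caveat about the statement, not about your proof.
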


We use the following popularity pigeonholing argument on numerous occasions throughout the paper. See \cite[Lemma~9]{Jones} for a proof.
\begin{lemma}
\label{lem:popularity}
Let $X$ be a finite set and let $f$ be a function such that $f(x) > 0$ for all $x \in X$. Suppose that 
 \begin{displaymath}
\sum_{x\in X} f(x) \geq K.
 \end{displaymath}
Let $Y = \{ x \in X : f(x) \geq K/2|X|\}.$ Then 
 \begin{displaymath}
\sum_{y\in Y} f(y) \geq \frac{K}{2}.
 \end{displaymath}
Furthermore, if $f(x) \leq M$ for all $x\in X$, then $|Y| \geq K/(2M)$.
\end{lemma}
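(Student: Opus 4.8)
The statement is a routine dyadic‑popularity pigeonholing, so the plan is simply to split the sum over $X$ according to whether $f$ is ``popular'' at a point and to estimate the unpopular part crudely. First I would set $Z = X \setminus Y = \{x \in X : f(x) < K/(2|X|)\}$, the set of unpopular points, so that $X = Y \sqcup Z$. On $Z$ every value of $f$ is strictly less than $K/(2|X|)$, and $|Z| \le |X|$, so $\sum_{x \in Z} f(x) < |X|\cdot K/(2|X|) = K/2$. Combining this with the hypothesis $\sum_{x \in X} f(x) \ge K$ yields $\sum_{y \in Y} f(y) = \sum_{x \in X} f(x) - \sum_{x \in Z} f(x) > K - K/2 = K/2$, which gives the first conclusion.

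For the second conclusion I would feed this lower bound into the pointwise upper bound $f(x) \le M$: then $K/2 \le \sum_{y \in Y} f(y) \le M\,|Y|$, and rearranging gives $|Y| \ge K/(2M)$, as claimed.

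There is no genuine obstacle here; the only things to be slightly careful about are (i) using $|Z| \le |X|$ rather than equality, so that the degenerate case $Y = X$ (where $Z = \varnothing$ and $\sum_{x \in Z} f(x) = 0 < K/2$) is covered automatically; (ii) noting that the first displayed inequality is in fact strict, so the non‑strict conclusion $\sum_{y \in Y} f(y) \ge K/2$ certainly holds; and (iii) tacitly assuming $X \ne \varnothing$, since otherwise the hypothesis would force $K \le 0$ and both assertions are vacuous. Positivity of $f$ plays essentially no role beyond ensuring $K/(2M) > 0$ in the final step, so the argument would go through verbatim for any $f \colon X \to \R$ bounded below by $0$.
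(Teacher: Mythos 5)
Your proof is correct and is the standard pigeonholing argument: bound the contribution of the unpopular points by $|X|\cdot K/(2|X|)=K/2$, subtract, and for the cardinality bound divide by the pointwise cap $M$. The paper does not actually prove the lemma but cites Jones~\cite[Lemma~9]{Jones}; the argument there is the same as yours, so no comparison is needed.
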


Throughout Lemma~\ref{lem:TtoLandIPL} below, with a slight abuse of notation, we use $T(P)$ to denote the number of collinear triples formed by a set of points $P \subset \F^2_q$. Also recall that $L(P)$ denotes the number of distinct lines determined by pairs of points in $P$.
\begin{lemma}
\label{lem:TtoLandIPL}
Let $P$ be a set of points and $L$ a set of lines in $\F_q^2$. For $k\geq 1$, define
\[
I_k(P, L) = |\{(p_1, \dots , p_k, l) \in P^k \times L : p_1, \dots, p_k \in l \}|.
\]
Then, we have the inequalities
\begin{equation}
\label{eqn:KIncidenceToIncidence}
I(P, L) \leq I_k(P, L) ^{1/k}  |L|^{(k-1)/k},
\end{equation}
\begin{equation}
\label{eqn:TriplesToIncidence}
I(P, L) \ll  T(P)^{1/3}|L|^{2/3} + |L|,
\end{equation}
\begin{equation}
\label{eqn:TriplestoP}
|P|^2 \ll |L(P)|^{1/3} T(P)^{2/3}.
\end{equation}
\end{lemma}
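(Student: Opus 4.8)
The plan is to derive all three inequalities from a single mechanism: H\"older's inequality applied to the quantities $\sum_{l\in L}|P\cap l|^{j}$ for $j=1,2,3$, together with some elementary bookkeeping of collinear triples. I would treat the three parts in turn.

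For \eqref{eqn:KIncidenceToIncidence} there is nothing to do beyond unwinding the definitions: $I(P,L)=\sum_{l\in L}|P\cap l|$ and $I_k(P,L)=\sum_{l\in L}|P\cap l|^{k}$, so the claimed bound is exactly the power-mean (H\"older) inequality applied to $\sum_{l\in L}|P\cap l|\cdot 1$ with exponents $k$ and $k/(k-1)$. For \eqref{eqn:TriplesToIncidence} I would take $k=3$ here, reducing matters to the estimate $I_3(P,L)\ll T(P)+I(P,L)$. This last bound comes from expanding $I_3(P,L)=\sum_{l\in L}|P\cap l|^{3}$ as the number of quadruples $(p_1,p_2,p_3,l)$ with $p_1,p_2,p_3\in P\cap l$ and splitting according to whether the three points are all equal: when they are not, at least two of them are distinct, so $l$ is forced and $(p_1,p_2,p_3)$ is a collinear triple, contributing at most $T(P)$ in total; when $p_1=p_2=p_3$, the admissible $l$ are the lines of $L$ through that single point, contributing $I(P,L)$ in total. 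One then substitutes $I_3(P,L)\ll T(P)+I(P,L)$ back into the $k=3$ inequality and separates the cases $I(P,L)\leq 2T(P)$, which yields the term $T(P)^{1/3}|L|^{2/3}$, and $I(P,L)>2T(P)$, where $I_3(P,L)\ll I(P,L)$ forces $I(P,L)\ll|L|$.

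For \eqref{eqn:TriplestoP} I would let $\mathcal{L}$ be the set of lines meeting $P$ in at least two points, so that $|\mathcal{L}|=L(P)$, and set $n_l=|P\cap l|\geq 2$ for $l\in\mathcal{L}$. Counting pairs of points of $P$ via the unique line through them gives $\sum_{l\in\mathcal{L}}n_l(n_l-1)=|P|(|P|-1)$, hence $|P|^{2}\ll\sum_{l\in\mathcal{L}}n_l^{2}\ll|P|^{2}$; counting ordered triples of \emph{distinct} collinear points of $P$ gives $\sum_{l\in\mathcal{L}}n_l(n_l-1)(n_l-2)\leq T(P)$; and the trivial observation that every triple $(p_1,p_2,p_3)\in P^{3}$ with $p_1=p_3$ is collinear gives $T(P)\geq|P|^{2}$. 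Combining these with the identity $n^{3}=n(n-1)(n-2)+3n^{2}-2n$ yields $\sum_{l\in\mathcal{L}}n_l^{3}\ll T(P)$, and a final application of H\"older's inequality, $\sum_{l\in\mathcal{L}}n_l^{2}\leq\big(\sum_{l\in\mathcal{L}}n_l^{3}\big)^{2/3}|\mathcal{L}|^{1/3}$, chains together to give $|P|^{2}\ll T(P)^{2/3}L(P)^{1/3}$.

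I do not expect a genuine obstacle: every step is either H\"older's inequality or an elementary counting identity. The one place that calls for care is the expansion of $I_3(P,L)$ in part \eqref{eqn:TriplesToIncidence} --- in particular isolating the degenerate contribution from triples with $p_1=p_2=p_3$, which is what is responsible for the extra $I(P,L)$ and hence the $+|L|$ term in the final bound --- together with remembering to invoke the crude lower bound $T(P)\gg|P|^{2}$ to tidy up the last inequality in part \eqref{eqn:TriplestoP}.
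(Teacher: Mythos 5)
Your argument is correct and follows essentially the same route as the paper: Hölder's inequality for \eqref{eqn:KIncidenceToIncidence} and \eqref{eqn:TriplestoP}, together with identifying $\sum_l |P\cap l|^3$ with $T(P)$ up to lower-order degenerate contributions. The paper discards the degenerate contribution as $\leq |L|$ directly (lines meeting $P$ in at most one point), whereas you bound it by $I(P,L)$ and resolve with a case split, and you spell out the combinatorial identity behind $I_3(P,L(P))\ll T(P)$ that the paper states without elaboration; these are cosmetic differences only.
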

\begin{proof}
For a line $l \in L$, we use $1_l$ to denote the indicator function of $l$. Namely, given a point $p\in \F_q^2$, we have $1_l(p) =1$ if $p\in l$ and $0$ otherwise. Then, clearly
\[
I(P, L) = \sum_{l\in L}\sum_{p \in P} 1_{l}(p).
\]
For $k\geq 1$, we obtain inequality \eqref{eqn:KIncidenceToIncidence} by an application of H\"{o}lder's inequality and the observation that
\begin{equation}
\label{IkSum}
I_k(P, L) = \sum_{l\in L}\Big(\sum_{p \in P} 1_{l}(p)\Big)^{k}.
\end{equation}
Next, we claim that $I_3(P, L) \ll I_3(P, L(P)) + |L|$. To see this, note that the contribution to $I_3(P, L)$ coming from lines containing exactly one point is bounded by $|L|$ and the contribution from lines containing two or more points of $P$ is of order $I_3(P, L(P))$. Then \eqref{eqn:TriplesToIncidence} follows from \eqref{eqn:KIncidenceToIncidence}, with $k=3$, and the simple observation that $I_3(P, L(P)) = T(P)$.

To prove \eqref{eqn:TriplestoP}, note that by H\"{o}lder's inequality, we have
\[
\sum_{l\in L(P)}\Big(\sum_{p \in P} 1_{l}(p)\Big)^{2} \leq \bigg(\sum_{l\in L(P)}\Big(\sum_{p \in P} 1_{l}(p)\Big)^{3}\bigg)^{2/3} \Big(\sum_{l\in L(P)} 1\Big)^{1/3} .
\]
Recalling identity \eqref{IkSum}, this reduces to
\[
I_2(P, L(P)) \leq I_3(P, L(P))^{2/3} L(P)^{1/3}.
\]
Then, since $I_3(P, L(P)) = T(P)$ and $I_2(P, L(P)) \gg |P|^{2}$, the required inequality follows.
\end{proof}

For sets $A, B \subset \F_q$, we define the additive energy $E_{+}(A, B)$, $E_{+}(A)$ as the additive analogue of \eqref{eqn:MEnergyDefn}. We have the following consequence of the Cauchy-Schwarz inequality
\begin{equation}
\label{eqn:AEnergyCS}
E_{+}(A, B)|A\pm B| \geq |A|^{2}|B|^{2}.
\end{equation}

The following lemma is a slight variation of a result due to Bourgain~\cite[Theorem~4.1]{Bou}. It can also be found in \cite[Lemma~14]{Shkr}.
\begin{lemma}
\label{lem:IncidencesToEnergies}
Let $A, B \subset \F_q$. Then
\begin{equation}
\label{eqn:EnergyR1}
E_{+}(1/A, 1/B) \leq I(P, L),
\end{equation}
where $P = (A + B)^{-1} \times (A + B)^{-1}$ and $L$ is a set of lines with $|L| \ll |B|^{2}$.
\end{lemma}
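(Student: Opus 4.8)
\textbf{Proof proposal for Lemma~\ref{lem:IncidencesToEnergies}.}

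The plan is to interpret each additive quadruple counted by $E_{+}(1/A,1/B)$ as an incidence between a point on the grid $P=(A+B)^{-1}\times(A+B)^{-1}$ and a suitably chosen line, where the lines are parametrised by pairs of elements of $B$ (so that there are at most $|B|^2$ of them). First I would write out the defining equation: $E_{+}(1/A,1/B)$ counts quadruples $(a_1,a_2,b_1,b_2)\in A^2\times B^2$ with
\[
\frac{1}{a_1}+\frac{1}{b_1}=\frac{1}{a_2}+\frac{1}{b_2}.
\]
The key algebraic manoeuvre is to clear denominators in a way that exposes the variables $\tfrac{1}{a_i+b_j}$. Writing $\tfrac{1}{a}+\tfrac{1}{b}=\tfrac{a+b}{ab}$ and massaging, one finds the identity
\[
\frac{1}{a_1}+\frac{1}{b_1}=\frac{1}{a_2}+\frac{1}{b_2}
\quad\Longleftrightarrow\quad
\frac{1}{a_1+b_1}-\frac{1}{a_1+b_2}=\frac{1}{a_2+b_1}-\frac{1}{a_2+b_2},
\]
which is the classical ``telescoping'' of reciprocals (both sides equal $\tfrac{b_2-b_1}{(a+b_1)(a+b_2)}$ after a short computation, once one checks the cross-multiplied polynomial identities match). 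Thus each quadruple gives a solution of $X_1-X_2=X_3-X_4$ with $X_1=\tfrac1{a_1+b_1}$, $X_2=\tfrac1{a_1+b_2}$, $X_3=\tfrac1{a_2+b_1}$, $X_4=\tfrac1{a_2+b_2}$, all four lying in $(A+B)^{-1}$.

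Next I would set up the incidence count. Fix the pair $(b_1,b_2)\in B\times B$ and consider the line
\[
l_{b_1,b_2}=\{(x,y)\in\F_q^2 : x-s=y-t\},
\]
or rather, more carefully, encode the relation so that the point $\big(\tfrac1{a_1+b_1},\tfrac1{a_2+b_1}\big)$ (coordinates coming from $b_1$) is incident to a line determined by $b_2$ via the value $\tfrac1{a_1+b_2}-\tfrac1{a_2+b_2}$. Concretely: for $(b_1,b_2)$ let $L_{b_1,b_2}$ be the line in the plane with coordinates labelled by $(A+B)^{-1}\times(A+B)^{-1}$ consisting of those $(u,v)$ such that $u-v$ equals the fixed slope-type quantity $\tfrac1{a+b_2}-\tfrac1{a'+b_2}$ — but since we want a genuine line, the cleaner route is: the relation above can be rewritten as a linear equation in the point $\big(\tfrac{1}{a_1+b_1},\tfrac{1}{a_2+b_1}\big)\in P$ whose coefficients depend only on $b_1$ and on the pair $(a_1,a_2)$; alternatively one fixes $b_1$ to index the point and $(b_2, \text{the common difference})$ to index the line. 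Either bookkeeping works; the essential point is that after grouping the four reciprocals into two ``$P$-coordinates'' and two ``parameters'', the equation $X_1-X_2=X_3-X_4$ is linear in the coordinates, so the locus of valid points for each parameter value is a line, and distinct quadruples give distinct incidence pairs $(p,\ell)$. The number of lines is bounded by the number of parameter values, which is $\ll |B|^2$.

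The step I expect to be the main obstacle — though it is bookkeeping rather than deep — is choosing the grouping of the four reciprocals into point-coordinates versus line-parameters so that (a) both point coordinates genuinely range over $(A+B)^{-1}$, giving $P=(A+B)^{-1}\times(A+B)^{-1}$ exactly, (b) the resulting family of lines is honestly a family of lines (the equation is affine-linear in the two point coordinates once the parameters are fixed), and (c) the map from quadruples to incident pairs $(p,\ell)$ is injective, so that $E_{+}(1/A,1/B)$ is bounded \emph{above} by $I(P,L)$ with $|L|\ll|B|^2$, not merely related to it. One must also handle the degenerate cases where some $a_i+b_j=0$ (excluded since we assume all sets avoid $0$ and contain more than one element, and one restricts to $a+b\neq 0$, absorbing any lost quadruples into the stated $\ll$). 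Once the parametrisation is pinned down, verifying the polynomial identity behind the reciprocal-telescoping and the injectivity of the encoding are both routine cross-multiplications, and the lemma follows immediately by definition of $I(P,L)$.
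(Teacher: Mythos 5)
Your proposed "telescoping" identity is false, and this is not a bookkeeping issue — it breaks the whole approach. You claim that
\[
\frac{1}{a_1}+\frac{1}{b_1}=\frac{1}{a_2}+\frac{1}{b_2}
\quad\Longleftrightarrow\quad
\frac{1}{a_1+b_1}-\frac{1}{a_1+b_2}=\frac{1}{a_2+b_1}-\frac{1}{a_2+b_2},
\]
with "both sides equal $\tfrac{b_2-b_1}{(a+b_1)(a+b_2)}$". But the two sides of your right-hand condition are
\[
\frac{b_2-b_1}{(a_1+b_1)(a_1+b_2)}
\qquad\text{and}\qquad
\frac{b_2-b_1}{(a_2+b_1)(a_2+b_2)},
\]
with \emph{different} $a$'s in the denominators; for $b_1\neq b_2$ the condition is therefore $(a_1+b_1)(a_1+b_2)=(a_2+b_1)(a_2+b_2)$, i.e.\ $(a_1-a_2)(a_1+a_2+b_1+b_2)=0$. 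This is not equivalent to the energy equation: for example $(a_1,a_2,b_1,b_2)=(1,2,1,2/3)$ over $\mathbb{Q}$ satisfies $1/a_1+1/b_1=2=1/a_2+1/b_2$, yet $\tfrac1{a_1+b_1}-\tfrac1{a_1+b_2}=-\tfrac1{10}\neq-\tfrac1{24}=\tfrac1{a_2+b_1}-\tfrac1{a_2+b_2}$. Since everything downstream in your sketch (the linear locus, the injectivity of the encoding, and even the choice of which reciprocals go into $P$) is built on this identity, the argument does not close as written.

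The paper uses a different and genuinely nonlinear parametrisation. Setting $s=(1/a_1+1/b_1)^{-1}=(1/a_2+1/b_2)^{-1}\in S=(1/A+1/B)^{-1}$ and using the correct identity
\[
\frac{1}{b}-\frac{s}{b^2}=\frac{1}{a+b}\qquad\text{whenever } s=\frac{ab}{a+b},
\]
each quadruple yields a triple $(c,d,s)=(1/b_1,1/b_2,s)$ for which the point $(c-sc^2,\,d-sd^2)=(1/(a_1+b_1),\,1/(a_2+b_2))$ lies in $P=(A+B)^{-1}\times(A+B)^{-1}$; eliminating $s$ shows this point lies on the line $y=\tfrac{d^2}{c^2}x+d(1-d/c)$, and these lines are indexed by $(c,d)\in B^{-1}\times B^{-1}$, giving $|L|\ll|B|^2$. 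Note in particular that neither coordinate of the point uses the "crossed" sums $a_1+b_2$ or $a_2+b_1$ that your grouping requires: the point records $1/(a_1+b_1)$ and $1/(a_2+b_2)$, while the $b$-dependence is pushed into the line parameters. If you want to repair your proof, you need to replace the false reciprocal-telescoping identity with this one (or something equivalent) before the rest of the incidence bookkeeping can be carried out.
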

\begin{proof}
Let $X = (A + B)^{-1}$ and $S = (1/A + 1/B)^{-1}$. Note that elements of $S$ are of the form $ab/(a+b)$ with $a \in A$ and $b \in B$. Observing the identity
\[
\frac{1}{b} - \frac{1}{b^{2}}\cdot \frac{ab}{a+b} = \frac{1}{a+b},
\]
it follows that the cardinality
\begin{equation}
\label{eqn:SReq1}
|\{(c, d, s) \in B^{-1} \times B^{-1} \times S : (c -sc^{2}, d - sd^{2}) \in X \times X\}|
\end{equation}
can be interpreted as the number of incidences between $X\times X$ and the set of $O(|B|^{2})$ lines of the form
\begin{equation}
\label{eqn:LinesEquation}
y = \frac{d^{2}}{c^{2}}x + d\Big(1 - \frac{d}{c}\Big).
\end{equation}
Furthermore, note that if a quadruple $(a_1, a_2, b_1, b_2) \in A^{2} \times B^{2}$ satisfies
\[
\frac{1}{a_1} + \frac{1}{b_1} = \frac{1}{a_2} + \frac{1}{b_2},
\]
then $(b^{-1}_1, b^{-1}_2, a_1b_1/(a_1 + b_1))$ is a unique solution to \eqref{eqn:SReq1}.
\end{proof}

\section{Proofs of the main results}
\subsection{Incidence bounds}

\begin{proof}[Proof of Theorem \ref{thm:UnbalancedIncidences}]
First, we collect some useful tools that will be required in the proof of Theorem~\ref{thm:UnbalancedIncidences}. Claim~\ref{lem:JonesClaim1}, stated below, closely follows \cite[Claims~1 and 2]{Jones}. Although, by carrying out some calculations more efficiently, we obtain an improvement on the final conclusion of this result.
\begin{claim}
\label{lem:JonesClaim1}
Let $A, B\subseteq \F_q$ and let $T = T^{*}(A, B)$ defined in \eqref{Def:NTTriples}. There exist distinct elements $b_1, b_2 \in B$ and a set $C \subset (B - b_1)/(b_2 - B)$ with
\begin{equation}
 \label{eqn:TCSize}
|C| \gg \frac{T^{5}}{|A|^{10}|B|^{14}}
\end{equation}
such that for all $c\in C$, there exist subsets $A_{1}^{(c)}, A_{2}^{(c)} \subseteq A$ with
\begin{equation}
\label{eqn:TA1,A2Sizes}
|A_{1}^{(c)}|, |A_{2}^{(c)}| \gg  \frac{T}{|A||B|^{3}}
\end{equation}
satisfying
\begin{equation}
\label{eqn:TA1+bA2estimate}
|A_{1}^{(c)} + cA_{2}^{(c)}| \ll \frac{|A|^{11}|B|^{15}}{T^{5}}
\end{equation}
and
\begin{equation}
\label{eqn:Tbsumsetestimates}
|A_{2}^{(c)} + A_{2}^{(c)}| \ll \frac{|A|^{23}|B|^{33}}{T^{11}}.
\end{equation}
Moreover, there exists some $c_{*} \in C$ such that, writing $A_{i}^{*}$ instead of $A_{i}^{(c_{*})}$, for all $c\in C$ we have
\begin{equation}
\label{eqn:TProductofIntersections*}
|A_{1}^{(c)} \cap A_{1}^{*}||A_{2}^{(c)} \cap A_{2}^{*}| \gg \frac{T^{4}}{|A|^{6}|B|^{12}}
\end{equation}
and
\begin{equation}
\label{eqn:c*plusc}
 |c_{*}A_{2}^{*} + cA_{2}^{*}|\ll \frac{|A|^{51}|B|^{75}}{T^{25}}.
\end{equation}
\end{claim}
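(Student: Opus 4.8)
The plan is to follow the pivoting strategy of Jones's Claims~1 and~2, reorganising the pigeonholing so as to lose as little as possible.

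\emph{Reformulation.}\ The first step is to rewrite $T=T^{*}(A,B)$ combinatorially. A nontrivial collinear triple of $A\times B$ is specified by a base point $(a_{1},b_{1})$, a ratio $\rho=(b_{3}-b_{1})/(b_{2}-b_{1})\in\F_{q}^{*}$, and the remaining coordinates; carrying this out,
\[
T=T^{*}(A,B)\approx\sum_{\rho\in\F_{q}^{*}}\mu_{A}(\rho)\,\mu_{B}(\rho),\qquad \mu_{A}(\rho)=|\{(a_{1},a_{2},a_{3})\in A^{3}:a_{3}-a_{1}=\rho(a_{2}-a_{1})\}|,
\]
where $\sum_{\rho}\mu_{A}(\rho)\approx|A|^{3}$, $\mu_{A}(\rho)\le|A|^{2}$, and symmetrically for $B$. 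I would also record the identity $\mu_{A}(\rho)=\sum_{a_{1}}|(A-a_{1})\cap\rho^{-1}(A-a_{1})|$ and the Cauchy--Schwarz bound $\mu_{A}(\rho)^{2}\le|A|\,E_{+}(A,\rho A)$, which is the bridge from collinear triples to sumset estimates.

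\emph{Localisation.}\ I would then run a chain of applications of the popularity lemma (Lemma~\ref{lem:popularity}): pass to a heavy $b_{1}\in B$, then a heavy $a_{1}\in A$, then a heavy $b_{2}\in B$, after which the surviving ratios $c$ may be taken in $(B-b_{1})/(b_{2}-B)$; then restrict further to those $c$ for which \emph{both} $\mu_{A}(c)$ and $\mu_{B}(c)$ are comparably large, obtaining $C$. Each pigeonhole costs a polynomial factor, and a careful accounting of these losses is what yields $|C|\gg T^{5}/(|A|^{10}|B|^{14})$, with every $c\in C$ carrying a witness of size $\gg T/(|A||B|^{3})$: a large intersection $|(A-a_{1})\cap c^{-1}(A-a_{1})|$, equivalently a large energy $E_{+}(A,cA)$.

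\emph{Sumset extraction and the pivot.}\ For each $c\in C$, feed $E_{+}(A,cA)$ into the Balog--Szemer\'edi--Gowers theorem: Lemma~\ref{lem:BSG3P} produces $A_{1}^{(c)},A_{2}^{(c)}\subseteq A$, both of size $\gg T/(|A||B|^{3})$, with $|A_{1}^{(c)}+cA_{2}^{(c)}|\ll|A|^{11}|B|^{15}/T^{5}$ --- the power $T^{5}$ being exactly the $|G|^{5}$ in the denominator of that lemma --- while Lemma~\ref{lem:BG-BSG} applied to the same data yields a subset with small difference set, whence $|A_{2}^{(c)}+A_{2}^{(c)}|\ll|A|^{23}|B|^{33}/T^{11}$ through the Pl\"unnecke--Ruzsa inequality (Lemma~\ref{lem:PlRu}); Lemma~\ref{lem:PlRuRefined} is the device for forcing both conclusions to hold for one common pair $A_{1}^{(c)},A_{2}^{(c)}$. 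Finally, since all the $A_{i}^{(c)}$ sit inside $A$ and have size $\gg T/(|A||B|^{3})$, a Cauchy--Schwarz/popularity argument over $c\in C$ selects a pivot $c_{*}$ with $|A_{1}^{(c)}\cap A_{1}^{*}|\,|A_{2}^{(c)}\cap A_{2}^{*}|\gg T^{4}/(|A|^{6}|B|^{12})$ for a positive proportion of $c$; restricting $C$ to that proportion and chaining the bounds on $|A_{1}^{*}+c_{*}A_{2}^{*}|$, $|A_{1}^{(c)}+cA_{2}^{(c)}|$, $|A_{2}^{*}+A_{2}^{*}|$ and the intersections through the Ruzsa triangle inequality and Lemma~\ref{lem:PlRu} produces $|c_{*}A_{2}^{*}+cA_{2}^{*}|\ll|A|^{51}|B|^{75}/T^{25}$.

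\emph{Main obstacle.}\ The difficulty is the bookkeeping rather than any individual inequality: there are on the order of a dozen interleaved pigeonhole / BSG / Pl\"unnecke steps, each passing to a subset and each losing a polynomial factor, and they must be arranged so that the size floor $T/(|A||B|^{3})$, the two sumset ceilings, the intersection lower bound and the final bound on $|c_{*}A_{2}^{*}+cA_{2}^{*}|$ all emerge simultaneously with precisely the stated exponents --- this is the ``more efficient calculation'' the authors allude to. I expect the trickiest single point to be reconciling the subset passages so that the $|A_{i}^{(c)}|$ lower bound and both sumset upper bounds hold for the \emph{same} $A_{1}^{(c)},A_{2}^{(c)}$ rather than for three different subsets. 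One should also note that the claim is non-vacuous only when $T^{5}\ge|A|^{10}|B|^{14}$, so nothing is asserted outside that range. The subfield hypothesis of Theorem~\ref{thm:UnbalancedIncidences} plays no role here; it is invoked only afterwards, when these sumset bounds are combined with the sum--ratio estimate (Lemma~\ref{lem:sum-ratio}).
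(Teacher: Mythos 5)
Your outline captures the skeleton of the argument --- pigeonhole $\to$ BSG $\to$ Pl\"unnecke--Ruzsa $\to$ Cauchy--Schwarz pivot --- and the BSG3P step is correctly identified (with the right exponents, including the role of $|G|^{5}$). But there is one genuine conceptual gap, and a second structural divergence whose exponents you have not checked.

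The gap is the derivation of \eqref{eqn:Tbsumsetestimates}. You propose to get $|A_{2}^{(c)}+A_{2}^{(c)}|$ from a \emph{separate} application of Lemma~\ref{lem:BG-BSG}, and then invoke Lemma~\ref{lem:PlRuRefined} to reconcile the subset that comes out of that application with the $A_{2}^{(c)}$ that comes out of Lemma~\ref{lem:BSG3P}. This reconciliation is not actually possible in the form you describe (the two lemmas produce a priori unrelated dense subsets of $A$, and Lemma~\ref{lem:PlRuRefined} passes to yet another subset rather than intersecting two given ones), and more to the point it is unnecessary. The paper obtains \eqref{eqn:Tbsumsetestimates} from the \emph{same} pair $A_{1}^{(c)},A_{2}^{(c)}$ produced by the single application of Lemma~\ref{lem:BSG3P}, via one use of Lemma~\ref{lem:PlRu}:
\[
|A_{2}^{(c)}+A_{2}^{(c)}| \,=\, |cA_{2}^{(c)}+cA_{2}^{(c)}| \,\le\, \frac{|A_{1}^{(c)}+cA_{2}^{(c)}|^{2}}{|A_{1}^{(c)}|} \,\ll\, \frac{(|A|^{11}|B|^{15}/T^{5})^{2}}{T/(|A||B|^{3})} \,=\, \frac{|A|^{23}|B|^{33}}{T^{11}}.
\]
So the issue you flag as the ``trickiest single point'' --- forcing the size floor and both sumset ceilings to hold for one common pair --- simply does not arise; \emph{both} sumset bounds come for free from the one BSG output. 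Lemma~\ref{lem:BG-BSG} is used in the proof of Theorem~\ref{thm:balancedIncidences}, not here.

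The second divergence is in the localisation. You pigeonhole $b_{1}$, then $a_{1}$, then $b_{2}$. The paper pigeonholes the \emph{pair} $(b_{1},b_{2})$ in one step (cost $|B|^{2}$), applies Lemma~\ref{lem:popularity} to the remaining coordinate $b\in B$, and crucially never pigeonholes an $A$-variable --- $(a_{1},a_{2})$ is kept as a free pair so that the full set $G$ of size $\gg T/|B|^{3}$ is available for BSG. Fixing a heavy $a_{1}$ would cost an additional factor of $|A|$ in $|G|$, and that loss propagates as $|A|^{5}$ after the BSG exponent $|G|^{5}$, so your chain would not recover \eqref{eqn:TA1+bA2estimate} with the stated exponents. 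Similarly, the further restriction of $C$ to slopes with both $\mu_{A}(c)$ and $\mu_{B}(c)$ large is not in the paper and is not needed: $|C|$ drops from $T/(|A|^{2}|B|^{2})$ to $T^{5}/(|A|^{10}|B|^{14})$ solely in the Cauchy--Schwarz pivot step (via Lemma~\ref{lem:popularity} applied to $\sum_{c}|P_{c}\cap P_{c_{*}}|$), not in the initial localisation. The $\mu_{A},\mu_{B}$ reformulation is harmless but the paper works directly with the affine condition $a_{1}(1-t)+a_{2}t\in A$, $t=(b-b_{1})/(b_{2}-b_{1})$, which is what makes the BSG input a clean partial sumset of two dilates of $A$.

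Your remaining remarks (the Cauchy--Schwarz/popularity choice of the pivot $c_{*}$, the Pl\"unnecke chaining for \eqref{eqn:c*plusc}, the non-vacuity range, the irrelevance of the subfield hypothesis to this Claim) are all consistent with the paper.
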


\begin{proof}
By the pigeonhole principle, there exist distinct elements $b_1, b_2 \in B$ such that 
\[
\bigg| \bigg\{(a_1,a_2, b) \in \ A \times A  \times (B\setminus\{b_1, b_2\}): a_1 \Big(1- \frac{b-b_1}{b_2 - b_1}\Big) + a_2\Big(\frac{b-b_1}{b_2-b_1}\Big) \in A\bigg\}\bigg| \gg \frac{T}{|B|^2}.
\]
By Lemma~\ref{lem:popularity}, there exists a set $B^{'}\subseteq B\setminus\{b_1, b_2\}$, with $|B^{'}| \gg T/(|A|^{2}|B|^{2})$, such that for each $b\in B^{'}$ there exist $\Omega(T/|B|^{3})$ pairs $(a_1, a_2) \in A \times A$, which satisfy
\[
a_1 \Big(1- \frac{b-b_1}{b_2 - b_1}\Big) + a_2\Big(\frac{b-b_1}{b_2-b_1}\Big) \in A.
\]
For $b \in B^{'}$ denote 
\[
G = \bigg\{(a_1, a_2) \in A^{2} : a_1 \Big(1- \frac{b-b_1}{b_2 - b_1}\Big) + a_2\Big(\frac{b-b_1}{b_2-b_1}\Big) \in A\bigg\},
\]
such that $|G| \gg T/|B|^{3}$. We apply Lemma~\ref{lem:BSG3P} with
\[
X = \Big(1- \frac{b-b_1}{b_2 - b_1}\Big)A, \quad Y = \Big(\frac{b-b_1}{b_2-b_1}\Big)A.
\]
Consequently, we deduce that there exist subsets $A_1^{(b)}, A_2^{(b)} \subseteq A$, with
\[
|A_1^{(b)}|, |A_2^{(b)}| \gg \frac{T}{|A||B|^{3}},
\]
which satisfy
\[
\bigg|A_1^{(b)} + \frac{b - b_1}{b_2 - b}A_2^{(b)} \bigg| \ll\frac{|A|^{11}|B|^{15}}{T^{5}}.
\]
Let 
\[
C^{'} = \bigg\{\frac{b-b_1}{b_2 -b}: b \in B^{'}\bigg\}
\]
and note that
\[
|C^{'}| \approx |B^{'}| \gg \frac{T}{|A|^{2}|B|^{2}}.
\]
Then for each $c\in C^{'}$, by a change of the indexing, we have sets $A_1^{(c)}, A_2^{(c)} \subset A$, with
\[
|A_1^{(c)} + cA_2^{(c)}| \ll \frac{|A|^{11}|B|^{15}}{T^{5}}
\]
and
\[
|A_1^{(c)}|, |A_2^{(c)}| \gg \frac{T}{|A||B|^{3}}.
\]
This gives \eqref{eqn:TA1,A2Sizes} and \eqref{eqn:TA1+bA2estimate}. Let $P_c = A_1^{(c)} \times A_2^{(c)}$. Then, for each $c\in C^{'}$, it follows that 
\[
|P_c| \gg \frac{T^2}{|A|^{2}|B|^{6}}.
\]
By the Cauchy-Schwarz inequality, we have
\[
|C^{'}|\frac{T^2}{|A|^{2}|B|^{6}} \ll \sum_{c\in C^{'}}|P_c| \leq |A|\bigg(\sum_{c_1, c_2\in C^{'}}|P_{c_1} \cap P_{c_2}|\bigg)^{1/2}.
\]
Then, by the pigeonhole principle, there exists $c_* \in C^{'}$ such that
\[
\sum_{c\in C^{'}}|P_{c} \cap P_{c_*}| \gg |C^{'}|\frac{T^{4}}{|A|^{6}|B|^{12}} \gg \frac{T^{5}}{|A|^{8}|B|^{14}}.
\]
By Lemma~\ref{lem:popularity}, there exists a subset $C \subseteq C^{'}$, with
\[
|C| \gg \frac{T^{5}}{|A|^{10}|B|^{14}},
\]
such that for each $c\in C$
\[
|P_{c} \cap P_{c_*}| \gg \frac{T^{4}}{|A|^{6}|B|^{12}}.
\]
This implies \eqref{eqn:TProductofIntersections*}, since
\[
|P_{c} \cap P_{c_*}| = |A_{1}^{(c)} \cap A_{1}^{*}||A_{2}^{(c)} \cap A_{2}^{*}|.
\]
By Lemma~\ref{lem:PlRu}, \eqref{eqn:TA1,A2Sizes} and \eqref{eqn:TA1+bA2estimate} we get
\[
|A_{2}^{(c)} + A_{2}^{(c)}| \leq \frac{|A_1^{(c)} + cA_2^{(c)}|^{2}}{|A_2^{(c)}|} \ll \frac{|A|^{23}|B|^{33}}{T^{11}},
\]
which proves \eqref{eqn:Tbsumsetestimates}. Next, by Lemma~\ref{lem:PlRu}, we have
\begin{align}
\label{eqn:TJpdc}
\nonumber |c_{*}A_{2}^{*} + cA_{2}^{(c)}| &\leq \frac{|c_{*}A_{2}^{*} + (A_{1}^{(c)} \cap A_{1}^{*})||cA_{2}^{(c)} + (A_{1}^{(c)} \cap A_{1}^{*})|}{|A_{1}^{(c)} \cap A_{1}^{*}|}
\\ &\leq \frac{|A_{1}^{*} +c_{*}A_{2}^{*} ||A_{1}^{(c)} + cA_{2}^{(c)}|}{|A_{1}^{(c)} \cap A_{1}^{*}|}.
\end{align}
Then, by Lemma~\ref{lem:PlRu} and \eqref{eqn:TJpdc} we get
\begin{align*}
|c_{*} A_{2}^{*} + cA_{2}^{*}| &\leq \frac{|c_{*} A_{2}^{*} + c(A_{2}^{(c)} \cap A_{2}^{*})||c A_{2}^{*} + c(A_{2}^{(c)} \cap A_{2}^{*})|}{|A_{2}^{(c)} \cap A_{2}^{*}|}
\\ &\leq \frac{|c_{*} A_{2}^{*} + c A_{2}^{(c)}||A_{2}^{*} +A_{2}^{*}|}{|A_{2}^{(c)} \cap A_{2}^{*}|}
\\&\leq \frac{|A_{1}^{*} + c_{*}A_{2}^{*}||A_{1}^{(c)}+ cA_{2}^{(c)}||A_{2}^{*} +A_{2}^{*}|}{|A_{1}^{(c)} \cap A_{1}^{*}||A_{2}^{(c)} \cap A_{2}^{*}|}.
\end{align*}
We obtain \eqref{eqn:c*plusc} by applying \eqref{eqn:TA1+bA2estimate}, \eqref{eqn:Tbsumsetestimates} and \eqref{eqn:TProductofIntersections*}.
\end{proof}

We use Lemma~\ref{lem:ShenCovering} and Claim~\ref{lem:JonesClaim1} to record a useful covering argument.
\begin{claim}
\label{app:TCovering}
Fix $n\leq 4$ and for $1 \leq i \leq n$, let $c_i \in C$ be arbitrary elements. Let
\begin{equation}
\label{eqn:GammaCDef}
\Gamma := \frac{|A|^{40}|B|^{60}}{T^{20}}.
\end{equation}
 Given any set $Y \subseteq A_{2}^{*}$, there exists a subset $Y^{'} \subseteq Y$ with $|Y^{'}| \approx |Y|$ such that, for $1 \leq i \leq n$, the sets $\pm c_{i}Y^{'}$ can each be fully covered by $O(\Gamma)$ translates of $A_{1}^{*}$.
\end{claim}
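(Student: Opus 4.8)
The plan is to apply the covering lemma, Lemma~\ref{lem:ShenCovering}, to each of the at most $2n\le 8$ sets $\pm c_iY$ and then to delete the uncovered points so as to produce $Y'$. Since $Y\subseteq A_2^{*}$, for every $i$ and every choice of signs we have $\pm c_iY\pm A_1^{*}\subseteq \pm c_iA_2^{*}\pm A_1^{*}$, so Lemma~\ref{lem:ShenCovering} applied with $\pm c_iY$ and translating set $A_1^{*}$ covers all but an arbitrarily small prescribed fraction of $\pm c_iY$ by
\[
\ll \frac{\min\{\,|c_iA_2^{*}+A_1^{*}|,\ |c_iA_2^{*}-A_1^{*}|\,\}}{|A_1^{*}|}
\]
translates of $A_1^{*}$. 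It therefore suffices to prove that this ratio is $\ll\Gamma$ for every $c_i\in C$.

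For that I would pivot on $c_{*}A_2^{*}$ in the Pl\"{u}nnecke--Ruzsa inequality, Lemma~\ref{lem:PlRu}. From \eqref{eqn:TA1+bA2estimate} with $c=c_{*}$ we have $|A_1^{*}+c_{*}A_2^{*}|\ll |A|^{11}|B|^{15}T^{-5}$, from \eqref{eqn:c*plusc} we have $|c_{*}A_2^{*}+c_iA_2^{*}|\ll |A|^{51}|B|^{75}T^{-25}$, and from \eqref{eqn:TA1,A2Sizes} we have $|c_{*}A_2^{*}|=|A_2^{*}|\gg T|A|^{-1}|B|^{-3}$. Applying Lemma~\ref{lem:PlRu} with $X=c_{*}A_2^{*}$, $Y_1=A_1^{*}$, $Y_2=c_iA_2^{*}$ bounds $|c_iA_2^{*}+A_1^{*}|$, and dividing by $|A_1^{*}|\gg T|A|^{-1}|B|^{-3}$ expresses the number of translates above in terms of $\Gamma=|A|^{40}|B|^{60}T^{-20}$. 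The difference-set variant is handled the same way, now also using \eqref{eqn:Tbsumsetestimates} (and its difference-set analogue) and, where convenient, \eqref{eqn:TProductofIntersections*}; the two signs in the minimum let one route the chain through whichever of $A_2^{*}+A_2^{*}$ or $A_2^{*}-A_2^{*}$ is smaller.

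To finish, apply the first paragraph with the prescribed fraction equal to $1/(16n)$ to each of the $2n$ sets $\pm c_iY$, getting exceptional sets $E_{i,\pm}\subseteq\pm c_iY$ with $|E_{i,\pm}|\le|Y|/(16n)$, and set
\[
Y'=Y\setminus\bigcup_{i,\pm}(\pm c_i)^{-1}E_{i,\pm}.
\]
Then $|Y'|\ge(1-2n/(16n))|Y|=\tfrac{7}{8}|Y|\approx|Y|$, while for each $i$ and each sign $\pm c_iY'\subseteq(\pm c_iY)\setminus E_{i,\pm}$, which by construction is fully covered by the $O(\Gamma)$ translates of $A_1^{*}$ produced above. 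This gives the claim.

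The only real difficulty is the exponent bookkeeping in the second step: $\Gamma$ is calibrated exactly to the estimates of Claim~\ref{lem:JonesClaim1}, so the Pl\"{u}nnecke--Ruzsa chain must use \eqref{eqn:TA1+bA2estimate}, \eqref{eqn:c*plusc} (and \eqref{eqn:Tbsumsetestimates} in the difference case) and the cardinality bound \eqref{eqn:TA1,A2Sizes} with essentially no slack, and in particular one must pivot on $c_{*}A_2^{*}$ rather than on the intersections $A_1^{(c_i)}\cap A_1^{*}$, which would cost further powers of $|A|^{2}|B|^{3}/T$. Should the naive chain overshoot $\Gamma$ by a bounded power of $|A|^{2}|B|^{3}/T$, one complements it with the trivial bound that $c_iY'$ is covered by at most $|A_2^{*}|\le|A|$ translates of $A_1^{*}$, which is itself $O(\Gamma)$ unless $T$ is large; the two bounds together cover the range of $T$ relevant in the proof of Theorem~\ref{thm:UnbalancedIncidences}.
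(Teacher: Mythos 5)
There is a genuine gap in the key step: your Pl\"unnecke--Ruzsa chain does not produce a bound of order $\Gamma$, and your suggested fallback does not close the resulting range.

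With your pivot $X=c_{*}A_2^{*}$, the chain gives
\[
|A_1^{*}+c_iA_2^{*}|\le \frac{|c_{*}A_2^{*}+A_1^{*}|\,|c_{*}A_2^{*}+c_iA_2^{*}|}{|c_{*}A_2^{*}|}
\ll \frac{(|A|^{11}|B|^{15}T^{-5})(|A|^{51}|B|^{75}T^{-25})}{T|A|^{-1}|B|^{-3}}
=\frac{|A|^{63}|B|^{93}}{T^{31}},
\]
and after dividing by $|A_1^{*}|\gg T|A|^{-1}|B|^{-3}$ the number of translates is $\ll |A|^{64}|B|^{96}T^{-32}=\Gamma\cdot(|A|^{2}|B|^{3}/T)^{12}$. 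Since $T=T^{*}(A,B)\le |A|^{2}|B|^{3}$, the overshoot factor $(|A|^{2}|B|^{3}/T)^{12}$ is always $\ge 1$, so the chain never gives $O(\Gamma)$ unless $T$ is at its extremal value. Your trivial fallback of $|A_2^{*}|\le|A|$ translates gives $|A|\ll\Gamma$ only when $T\ll|A|^{39/20}|B|^{3}$, which leaves the entire range $|A|^{39/20}|B|^{3}\ll T\le|A|^{2}|B|^{3}$ uncovered, and nothing in the proof of Theorem~\ref{thm:UnbalancedIncidences} lets you discard that range a priori. You also have the direction backwards in your final remark: it is pivoting on the intersections, not avoiding them, that removes the loss.

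The paper's proof is instead built around the intersections. One covers $\pm c_iY$ by translates of $A_1^{(c_i)}\cap A_1^{*}$ (a subset of $A_1^{*}$, so such translates are in particular translates of $A_1^{*}$), giving at most
\[
O\!\left(\frac{|c_iA_2^{*}+(A_1^{(c_i)}\cap A_1^{*})|}{|A_1^{(c_i)}\cap A_1^{*}|}\right)
\]
translates, and then applies Lemma~\ref{lem:PlRu} with pivot $c_i(A_2^{(c_i)}\cap A_2^{*})$. This yields
\[
\frac{|c_iA_2^{*}+(A_1^{(c_i)}\cap A_1^{*})|}{|A_1^{(c_i)}\cap A_1^{*}|}
\le\frac{|A_2^{*}+A_2^{*}|\,|A_1^{(c_i)}+c_iA_2^{(c_i)}|}{|A_1^{(c_i)}\cap A_1^{*}|\,|A_2^{(c_i)}\cap A_2^{*}|},
\]
where the denominator is the \emph{product} of the two intersection sizes controlled by \eqref{eqn:TProductofIntersections*}, and the numerator is controlled by \eqref{eqn:Tbsumsetestimates} and \eqref{eqn:TA1+bA2estimate}. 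Plugging in gives exactly $\Gamma$. In short, the intended route bypasses the comparatively weak estimate \eqref{eqn:c*plusc} entirely; your route leans on it and inherits its weakness. Your concluding step (intersecting the $2n$ good subsets with exceptional fractions $1/(16n)$ to form $Y'$) is fine and mirrors the paper.
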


\begin{proof}
Fix $0 < \epsilon < 1/16$. For $1 \leq i \leq n$, by Lemma~\ref{lem:ShenCovering}, there exist sets $Y_{c_i} \subseteq Y$ with $|Y_{c_i}| \geq (1-\epsilon)|Y|$ such that $\pm c_{i}Y_{c_i}$ can be covered by
\[
O_{\epsilon}\Bigg(\frac{|\pm c_{i}Y \pm (A_{1}^{(c_{i})} \cap A_{1}^{*})|}{|A_{1}^{(c_{i})} \cap A_{1}^{*}|}\Bigg) = O_{\epsilon}\Bigg( \frac{|c_{i}A_{2}^{*} + (A_{1}^{(c_{i})} \cap A_{1}^{*})|}{|A_{1}^{(c_{i})} \cap A_{1}^{*}|}\Bigg)
\]
translates of $A_{1}^{(c_{i})} \cap A_{1}^{*} \subseteq A_{1}^{*}$. By Lemma~\ref{lem:PlRu} and the estimates \eqref{eqn:TA1+bA2estimate}, \eqref{eqn:Tbsumsetestimates} and \eqref{eqn:TProductofIntersections*} we have
\begin{align*}
\frac{|c_{i}A_{2}^{*} + (A_{1}^{(c_{i})} \cap A_{1}^{*})|}{|A_{1}^{(c_{i})} \cap A_{1}^{*}|} &\leq \frac{|c_{i}A_{2}^{*} + c_{i}(A_{2}^{(c_{i})} \cap A_{2}^{*})||(A_{1}^{(c_{i})} \cap A_{1}^{*}) +c_{i}(A_{2}^{(c_{i})} \cap A_{2}^{*})|}{|A_{1}^{(c_{i})} \cap A_{1}^{*}||A_{2}^{(c_{i})} \cap A_{2}^{*}|} \\
&\leq \frac{|A_{2}^{*} +A_{2}^{*}||A_{1}^{(c_{i})} + c_{i}A_{2}^{(c_{i})}|}{|A_{1}^{(c_{i})} \cap A_{1}^{*}||A_{2}^{(c_{i})} \cap A_{2}^{*}|}
\\ &\ll \frac{|A|^{40}|B|^{60}}{T^{20}}.
\end{align*}
Let $Y^{'} = Y_{c_1}\cap \cdots \cap Y_{c_n},$ so that $|Y^{'}| \geq (1-n\epsilon) |Y|\geq (3/4)|Y|$. Hence, for $1 \leq i \leq n$, the sets $\pm c_{i}Y^{'}$ are each fully contained in $O(\Gamma)$ translates of $A_{1}^{*}$.
\end{proof}

We apply Lemma~\ref{lem:BourgainIntersection} with $X= A_{2}^{*}$ and $Y= C/c_{*}$. Recalling \eqref{eqn:c*plusc}, we take
\[
M =O \bigg(\frac{|A|^{51}|B|^{75}}{T^{25}}\bigg).
\]
Hence, there exist elements $a_1, a_2, a_3 \in A_{2}^{*}$ such that
\[
\bigg| (A_{2}^{*} - a_1) \cap \frac{a_2 - a_3}{c_{*}}C\bigg| \gg \frac{|A_{2}^{*}||C|}{M} \gg \frac{T^{31}}{|A|^{62}|B|^{92}}.
\]
Since the conditions and the desired estimates of Theorem~\ref{thm:UnbalancedIncidences} are unchanged under dilation, without loss of generality, we assume $(a_2 - a_3) = 1$. Then, setting 
\[
D= (A_{2}^{*} - a_1) \cap (C/c_{*}),
\]
we have
\begin{equation}
 \label{eqn:DSizeLowerBound}
|D| \gg \frac{T^{31}}{|A|^{62}|B|^{92}}.
\end{equation}

We consider three cases.

\subsection*{ Case 1:} $1 + R(D) \nsubseteq R(D)$.
There exist elements $a$, $b$, $c$, $d \in C$ such that
\[
r = 1+ \frac{a - b}{c - d} \not\in R(D).
\]
By Lemma~\ref{lem:RBcard}, for any set $D^{'} \subseteq D$, with $|D^{'}| \approx |D|$, we have
\[
|D^{'}|^{2} = |D^{'} + rD^{'}|.
\]
Let $E \subseteq A_{2}^{*}$ with $|E|\approx |A_{2}^{*}|$. Applying Lemma~\ref{lem:PlRu} with $X = (c - d)E$, we get
\begin{align*}
|E||D^{'}|^{2} &= |E||(c-d)D^{'} + (c-d)D^{'} + (a-b)D^{'}| \\ &\leq |E + D^{'} + D^{'}| |cE - dE + aD^{'} - bD^{'}| \\ &\leq |A_{2}^{*} + A_{2}^{*} + A_{2}^{*}||cE - dE + aD^{'} - bD^{'}|.
\end{align*}
Now, by Claim~\ref{app:TCovering}, there exist subsets $E^{'}\subseteq A_{2}^{*}$ with $|E^{'}|\approx |A_{2}^{*}|$ and $D^{''} \subseteq D$ with $|D^{''}| \approx |D|$ such that  $cE^{'}, - dE^{'}, aD^{''}, - bD^{''}$ are contained in $O(\Gamma)$ translates of $A_{1}^{*}$. Thus, setting $E = E^{'}$ and $D^{'} = D^{''}$, we get
\[
|A_{2}^{*}||D|^{2} \ll \Gamma^{4} |A_{2}^{*} + A_{2}^{*} + A_{2}^{*}| |A_{1}^{*} + A_{1}^{*} + A_{1}^{*} + A_{1}^{*}|.
\]
By Lemma~\ref{lem:PlRu}, it follows that
\[
|D|^{2} \ll \Gamma^{4} \frac{|A_{1}^{*} + c_{*}A_{2}^{*}|^{7}}{|A_{1}^{*}|^{2}|A_{2}^{*}|^{4}}.
\]
We use \eqref{eqn:TA1,A2Sizes}, \eqref{eqn:TA1+bA2estimate}, \eqref{eqn:GammaCDef} and \eqref{eqn:DSizeLowerBound} to conclude
\[
T^{183} \ll |A|^{367}|B|^{547}.
\]

\subsection*{Case 2:} $D\cdot R(D) \nsubseteq R(D)$.
There exist elements $a$, $b$, $c$, $d$, $e \in C$ such that
\[
r = \frac{a}{c_*}\frac{b - c}{d - e} \not\in R(D).
\]
Then, for any subset $D^{'} \subseteq D$, with $|D^{'}| \approx |D|$, by Lemma~\ref{lem:RBcard}, we have
\[
|D^{'}|^{2} = |D^{'} + rD^{'}|.
\]
Let $E \subseteq A_{2}^{*}$ be any set with $|E|\approx |A_{2}^{*}|$. Applying Lemma~\ref{lem:PlRu} with $X = \frac{b-c}{d-e}E$, we get
\begin{align*}
|E||D^{'}|^{2} &= |E||D^{'} + rD^{'}| \\ &\leq \Big|D^{'} + \frac{b -c}{d -e} E \Big| |c_{*}E + aD^{'} | \\ & = |dD^{'} - eD^{'} +bE - cE| |c_{*}E + aD^{'}|.
\end{align*}
By Claim~\ref{app:TCovering}, there exist subsets $D^{''} \subseteq D$ with $|D^{''}| \approx |D|$ and $E^{'} \subseteq E$ with $|E^{'}|\approx |E|$ such that $aD^{''}, dD^{''}, -eD^{''}, bE^{'}, -cE^{'}$ are contained in $O(\Gamma)$ translates of $A_{1}^{*}$. Thus, setting $D^{'} = D^{''}$ and $E = E^{'}$, we get
\begin{align*}
|A_{2}^{*}||D|^{2} &\ll \Gamma^{5} |A_{1}^{*}+A_{1}^{*}+A_{1}^{*}+A_{1}^{*}||A_{1}^{*} + c_{*}A_{2}^{*}|\\
&\ll \Gamma^{5} \frac{|A_{1}^{*} + c_{*}A_{2}^{*}|^{5}}{|A_{2}^{*}|^{3}}.
\end{align*}
To obtain the last inequality we used Lemma~\ref{lem:PlRu}. Then, by \eqref{eqn:TA1,A2Sizes}, \eqref{eqn:TA1+bA2estimate}, \eqref{eqn:GammaCDef} and \eqref{eqn:DSizeLowerBound}, we get
\[
T^{191} \ll |A|^{383}|B|^{571}.
\]

\subsection*{Case 3:} Cases 1 and 2 do not happen. Thus, by Lemma~\ref{lem:QuotientSetSubfield} applied to the set $D$, we have $R(D) = \F_D$. Based on our assumption on the set $A$, we consider three cases.

\subsection*{Case 3.1:} $R(D) = \F_q$ and $|D| > q^{1/2}.$
Let $Y$ be an arbitrary subset of $D$ with $|Y| \approx |D|$. Then, by Lemma~\ref{lem:pivotting}, there exists an element $\xi \in \F_q^* \subset R(D)$ such that $q\ll |Y + \xi Y|$. Since $\xi \in R(D)$, there exist elements $a, b, c, d \in C$ such that
\begin{equation}
 \label{eqn:Tcase4.1pivot}
q \ll |aY - bY + cY - dY|.
\end{equation}
By Claim~\ref{app:TCovering}, there exists a subset $D^{'} \subseteq D$, with $|D^{'}| \approx |D|$ such that $aD^{'}$, $-bD^{'}$, $cD^{'}$ and $-dD^{'}$ can be covered by $O(\Gamma)$ translates of $A_{1}^{*}$. We set $Y=D^{'}$, so that by \eqref{eqn:Tcase4.1pivot} and Lemma~\ref{lem:PlRu}, we get
\[
q \ll \Gamma^{4} |A_{1}^{*}+A_{1}^{*}+A_{1}^{*}+A_{1}^{*}| \ll \Gamma^{4}\frac{|A_{1}^{*} + c_{*}A_{2}^{*}|^{4}}{|A_{2}^{*}|^{3}}.
\]
Then, by \eqref{eqn:TA1,A2Sizes}, \eqref{eqn:TA1+bA2estimate}, \eqref{eqn:GammaCDef} and \eqref{eqn:DSizeLowerBound}, we conclude that
\[
T^{103} \ll q^{-1}|A|^{207}|B|^{309}.
\]
Moreover, by Lemma~\ref{lem:RBFq}, the assumption $|D| > q^{1/2}$ implies that $R(D) = \F_q$. Hence, if $|D| > q^{1/2}$ then all other cases become impossible.
\subsection*{Case 3.2:}  Either $R(D) = \F_q$ and $|D| \leq q^{1/2}$ or $R(D)$ is a proper subfield and $|D| = |D\cap R(D)| \ll |R(D)|^{1/2}.$
Let $Y$ be an arbitrary subset of $D$ with $|Y| \approx |D|$. By Lemma~\ref{lem:pivotting}, there exist elements $a, b, c, d \in C$ such that
\begin{equation}
 \label{eqn:Tcase4.2pivot}
|D|^{2} \ll |aY - bY + cY - dY|.
\end{equation}
By Claim~\ref{app:TCovering}, there exists a subset $D^{'} \subseteq D$, with $|D^{'}| \approx |D|$ such that $aD^{'}$, $-bD^{'}$, $cD^{'}$ and $-dD^{'}$ can be covered by $O(\Gamma)$ translates of $A_{1}^{*}$. We set $Y=D^{'}$, so that by \eqref{eqn:Tcase4.2pivot} and Lemma~\ref{lem:PlRu} we get
\[
|D|^{2}  \ll \Gamma^{4} |A_{1}^{*}+A_{1}^{*}+A_{1}^{*}+A_{1}^{*}| \ll \Gamma^{4}\frac{|A_{1}^{*} + c_{*}A_{2}^{*}|^{4}}{|A_{2}^{*}|^{3}}.
\]
Then, by \eqref{eqn:TA1,A2Sizes}, \eqref{eqn:TA1+bA2estimate}, \eqref{eqn:GammaCDef} and \eqref{eqn:DSizeLowerBound}, we conclude the inequality
\[
T^{165} \ll |A|^{331}|B|^{493}.
\]

\subsection*{Case 3.3:}  $R(D)$ is a proper subfield and $|D| = |D\cap R(D)| \ll |A|^{31/191}|B|^{129/191}.$
Then, by \eqref{eqn:DSizeLowerBound}, we obtain
\[
T^{191} \ll |A|^{383}|B|^{571}.
\]

Finally, collecting the acquired bounds on $T^*(A, B)$ from the above cases, we use \eqref{eqn:T*toT} to conclude estimate \eqref{eqn:TriplesUnbalanced}. Then, we get \eqref{eqn:UnbalancedcartesianIncidence1} and \eqref{eqn:NoOfLinesUnbalanced} by subbing the acquired bound on $T(A, B)$ into the inequalities \eqref{eqn:TriplesToIncidence} and \eqref{eqn:TriplestoP} respectively.
\end{proof}

\begin{proof}[Proof of Theorem \ref{thm:balancedIncidences}]
Let $A, B \subset \F_q$ and let $T = T^{*}(A, B)$. Throughout the proof, we treat $A$ and $B$ as potentially different sets. However, our method is not particularly effective in dealing with sets of different sizes and so ultimately we assume $|A| = |B|$ to prove estimate \eqref{eqn:Triplesbalanced} under restriction \eqref{eqn:TriplesRestriction}.

By the pigeonhole principle, there exist distinct elements $b_1, b_2 \in B$ such that 
\[
\bigg| \bigg\{(a_1,a_2, b) \in \ A \times A  \times (B\setminus\{b_1, b_2\}): a_1 \Big(1- \frac{b-b_1}{b_2 - b_1}\Big) + a_2\Big(\frac{b-b_1}{b_2-b_1}\Big) \in A\bigg\}\bigg| \gg \frac{T}{|B|^2}.
\]
Let 
\[
B_{*} = \bigg\{\frac{b-b_1}{b_2 -b_1}: b \in B\setminus\{b_1, b_2\}\bigg\}.
\]
By Lemma~\ref{lem:popularity} there exists a set $A_{*}\subseteq A$, with $|A_{*}| \gg T/(|A||B|^3)$, such that
\begin{equation}
\label{mainpopularity}
|\{(a_1,a_2, b) \in A \times A_{*} \times B_{*} : a_1(1- b) + a_2 b \in A\}| \gg \frac{T}{|B|^2}
\end{equation}
and for each $a\in A_{*}$ we have 
\begin{equation}
\label{apopularity}
|\{(a_1, b) \in A \times B_{*} : a_1(1-b) + ab \in A\}| \gg \frac{T}{|A||B|^2}.
\end{equation}
By the pigeonhole principle, applied to \eqref{mainpopularity}, there exists an element $b_0 \in B_{*}$ such that
\begin{equation}
\label{secondG}
\big| \{(a_1,a_2) \in A \times A_{*}: a_1( 1- b_{0}) + a_2 b_{0} \in A\} \big| \gg \frac{T}{|B|^3}.
\end{equation}
We apply Lemma~\ref{lem:BG-BSG} with
\[
X=b_{0} A_{*}, \quad Y = (b_{0} - 1)A\quad \text{and} \quad G = \{(x, y) \in X \times Y : x - y \in A\}.
\]
Observing that
\[
|X| \leq |A|, \quad |Y|  = |A|, \quad |G| \gg \frac{T}{|B|^3} \quad \text{and} \quad  |X\overset{G}{-} Y|  \leq |A|,
\]
we deduce that there exists a subset $A^{'} \subseteq A_{*}$, with
\begin{equation}
\label{Aprimesize}
|A'|\gg \frac{|G|}{|Y|} \gg \frac{T}{|A||B|^3},
\end{equation} 
such that
\begin{equation}
\label{diffbound}
|A' - A'| \ll \frac{|X \overset{G}{-} Y|^{4} |X|^4 |Y|^3}{|G|^5} \ll \frac{|A|^{11}|B|^{15}}{T^{5}}.
\end{equation} 
Since $A'\subseteq A_{*}$, by \eqref{apopularity} and \eqref{Aprimesize}, it follows that
\begin{align}
\label{Aprimesub}
|\{(a_1,a_2, b) \in A \times A^{'} \times B_{*} : a_1(1-b) + a_2 b \in A\}| \gg \frac{T^2}{|A|^2|B|^5}.
\end{align}
By the pigeonhole principle, applied to \eqref{Aprimesub}, there exists an element $a_0\in A$ such that 
\[
| \{(a, b) \in A^{'}\times B_{*} : a_0 (1-b) + ab \in A\}| \gg \frac{T^2}{|A|^3|B|^5}.
\]
Equivalently,
\begin{equation}
\label{b2eqyq}
| \{(a, b) \in (A^{'}-a_0) \times B_{*} : ab \in (A - a_0)\}| \gg \frac{T^2}{|A|^3|B|^5}.
\end{equation}
We use Lemma~\ref{lem:BG-BSG}, multiplicatively, with 
\[
X = A^{'} - a_0, \quad Y = B_{*}^{-1}\quad \text{and} \quad G = \{(x, y) \in X \times Y: x/y \in (A - a_0) \}.
\]
Observe that
\[
 |X| \leq|A|, \quad |Y| \approx |B|, \quad |G|\gg \frac{T^2}{|A|^3|B|^5} \quad \text{and} \quad |X\overset{G}{/}Y| \leq |A|.
\]
We conclude that there exists a subset $C \subseteq A^{'} - a_0$, with
\begin{equation}
\label{Csize}
|C|\gg \frac{|G|}{|Y|} \gg \frac{T^2}{|A|^3|B|^6},
\end{equation} 
such that
\begin{equation}
\label{prodbound}
|C / C| \ll \frac{|X \overset{G}{/} Y|^{4} |X|^4 |Y|^3}{|G|^5} \ll \frac{|A|^{23}|B|^{28}}{T^{10}}.
\end{equation} 
Since $C\subseteq A' - a_0$, by \eqref{diffbound}, we also have
\begin{equation}
|C - C| \ll \frac{|A|^{11}|B|^{15}}{T^{5}}.
\end{equation} 
By Lemma~\ref{lem:sum-ratio}, applied to the set $C$, we get
\[
T^{*}(A, B) \ll |A|^{217/104} |B|^{302/104} + q^{-1/95}|A|^{199/95}|B|^{277/95}.
\]
To obtain \eqref{eqn:Triplesbalanced}, we set $B =A$ and use \eqref{eqn:T*toT}. It remains to justify our use of Lemma~\ref{lem:sum-ratio}. Fix an arbitrary $0<\eta < 1/8$. Then, for any constant $\lambda>0$, we may assume $\lambda |A|^{51/52} \leq \eta|C|$ as otherwise, recalling \eqref{Csize}, we can use the lower bound $|C| \gg T^{2}/|A|^{9}$ to obtain the required estimate. Now, suppose the set $A$ satisfies condition \eqref{eqn:TriplesRestriction}. Then, given an arbitrary proper subfield $G$ and element $c \in \F_q$, there exists some constant $\lambda >0$ such that
\[
|C\cap cG| \leq |(A-a_0) \cap cG| \leq \lambda\cdot \max\{|G|^{1/2}, |A|^{51/52}\} \leq \max\{\lambda|G|^{1/2}, \eta |C|\},
\]
as required. Finally, we obtain \eqref{eqn:cartesianIncidence1} and \eqref{eqn:NoOfLines} by subbing the acquired bound on $T(A)$ into the inequalities \eqref{eqn:TriplesToIncidence} and \eqref{eqn:TriplestoP} respectively.
\end{proof}
\begin{proof}[Proof of Theorem~\ref{Thm:TSumsetBound}]
Recall the definitions \eqref{triplesDefn} and \eqref{eqn:MEnergyDefn}. The number of collinear triples formed by $A\times A$ can be expressed as
\[
T(A) = \sum_{a,b \in A} E_{\times}(A + a, A + b).
\]
By \eqref{eqn:MEECS} and another application of the Cauchy-Schwarz inequality, we get
\[
T(A) \leq \bigg(\sum_{a \in A} E_{\times}(A + a)^{1/2}\bigg)^2 \leq |A|\sum_{a \in A} E_{\times}(A + a) \leq|A|^{2}\max_{a\in A} E_{\times}(A + a).
\]
Then, under restriction \eqref{eqn:TSmallSumsetRestriction}, we can bound $\max_{a\in A} E_{\times}(A + a)$ using Lemma~\ref{lem:energybound}. This concludes the proof of \eqref{eqn:SumsetToTriples}. 
\end{proof}
\begin{proof}[Proof of Corollary~\ref{cor:SmSSIPL}]
Under restriction \eqref{eqn:TSmallSumsetRestriction}, with $\delta=2/5$, we use \eqref{eqn:SumsetToTriples} and the assumption $|A+A|\approx |A|$ to get \eqref{eqn:CTIntervals}. Then, using this bound on $T(A)$ together with \eqref{eqn:TriplesToIncidence} and \eqref{eqn:TriplestoP}, we obtain the estimates \eqref{eqn:IncidencesIntervals} and \eqref{eqn:LinesIntervals} respectively.
\end{proof}

\subsection{Applications}
\begin{proof}[Proof of Corollary~\ref{cor:SR}]
We use Lemma~\ref{lem:IncidencesToEnergies}, together with the incidence bound \eqref{eqn:cartesianIncidence1} of Theorem~\ref{thm:balancedIncidences}, to deduce \eqref{eqn:ERABFq}. Then, we set $B = A$ and apply \eqref{eqn:AEnergyCS} to get
\[
\frac{|A|^{4}}{|1/A + 1/A|} \ll \Big(|A + A|^{173/104} + q^{-1/285}|A+A|^{476/285}\Big) |A|^{4/3}.
\]
Hence, either
\[
|A+A|^{519}|1/A + 1/A|^{312} \gg |A|^{832}
\]
or
\[
|A+A|^{476}|1/A + 1/A|^{285} \gg q|A|^{760}.
\]
This gives \eqref{eqn:SRvsSS}. Next, we set $B = A^{-1}$ and use \eqref{eqn:AEnergyCS} to get
\[
\frac{|A|^{4}}{|A + 1/A|} \ll \Big(|A + 1/A|^{173/104} + q^{-1/285}|A + 1/A|^{476/285}\Big) |A|^{4/3},
\]
which gives estimate \eqref{eqn:SRExpander}.

Now, assume that $\F_q$ does not contain any proper subfields $G$, with
\begin{equation}
\label{eqn:corSRd1}
|A|^{51/52} < |G| < |A|^{(2\cdot 832)/831}.
\end{equation}
Then, suppose that for some proper subfield $G$ and elements $c,d \in \F_q$, the left hand side of \eqref{eqn:ERCon} is larger than $|G|^{1/2}$. By our assumption \eqref{eqn:corSRd1}, either $|G| \leq |A|^{51/52}$ so that restriction \eqref{eqn:ERCon} is satisfied or $|G|\geq |A|^{(2\cdot 832)/831}$ so that we have
\[
|A+B| \geq |G|^{1/2} \geq |A|^{1+1/831}.
\]
This gives the relevant required estimates for both cases $B =A$ and $B = A^{-1}$. Finally, we restate \eqref{eqn:corSRd1} as a condition on $A$, by asking that the cardinality of $A$ does not lie in the intervals $(|G|^{1/2-1/1664}, |G|^{1+1/51})$ for all proper subfields $G$ of $\F_q$.
\end{proof}

\begin{proof}[Proof of Corollary~\ref{cor:Hyperbolic}]
Let $S = A \cap \alpha/A$. Suppose that $S$ satisfies condition \eqref{eqn:ERCon}. Then, noting that $S + S \subseteq A + A$ and $\alpha/S + \alpha/S \subseteq A + A$, by the estimate \eqref{eqn:SRvsSS} we have
\[
|S| \ll  |A + A| ^{831/832} + q^{-1/760}|A+A|^{761/760}.
\]
Now, suppose that for all proper subfields $G$ and elements $c, d \in \F_q$ we have
\begin{equation}
\label{eqn:HypSolConPf}
|(A+A)^{-1} \cap (cG+d)| \ll \max\{|G|^{1/2}, |A+A|^{(831\cdot 51)/(832\cdot 52)}\}.
\end{equation}
It follows that either $|S| < |A+A|^{831/832}$, which gives the desired result or, by \eqref{eqn:HypSolConPf}, we can deduce that $S$ satisfies condition \eqref{eqn:ERCon}. Finally, note that $47/48 < (831\cdot 51)/(832\cdot 52)$, which means that condition \eqref {eqn:HypSolConPf} is satisfied under condition \eqref{eqn:HypSolCon}.

Suppose that for all proper subfields $G$ of $\F_q$, $|A| \not\in (|G|^{1/2 - 1/1664}, |G|^{1+1/47})$. Hence $\F_q$ does not contain any proper subfields $G$ with
\[
|A|^{47/48} < |G| < |A|^{(2\cdot 832)/831}.
\]
Then if, for some $c,d\in \F_q$ and a proper subfield $G$, the left hand side of \eqref{eqn:HypSolConPf} is larger than $|G|^{1/2}$, it follows that either $|G| \leq |A|^{47/48}$ so that \eqref{eqn:HypSolCon} is satisfied or
\[
|A+A| \geq |G|^{1/2} \geq |A|^{832/831}\geq |S|^{832/831},
\]
which gives the required estimate.
\end{proof}

\begin{proof}[Proof of Corollary~\ref{cor:SRI}]
Let $X = (A +A)^{-1} = 1/A$ and let $L$ be any set of lines. Note that, using estimate \eqref{eqn:SumsetToTriples} together with \eqref{eqn:TriplesToIncidence}, we can bound $I(X\times X, L)$ in terms of $|X + X|$. We use identity \eqref{eqn:EnergyR1} of Lemma~\ref{lem:IncidencesToEnergies}, as well as \eqref{eqn:AEnergyCS}, to deduce
\begin{align*}
\frac{|A|^{4}}{|1/A + 1/A|} &\leq E_{+}(1/A) \ll (\log|A|)^{1/3}\cdot \max \big\{|1/A + 1/A|^{7/12}|A|, \\ &|1/A + 1/A|^{6/15}|A|^{18/15}, |A|^{5/3-\delta/6}, |1/A + 1/A|^{7/12}|A|^{7/6}q^{-1/12}\big\} |A|^{4/3}.
\end{align*}
Choosing $\delta=3/7$, we obtain the required result based on the last three terms of the above inequality and note that the first term yields a better bound than required. Given this choice of $\delta$, by \eqref{eqn:TSmallSumsetRestriction}, we see that \eqref{eqn:SoRICondition} gives the necessary restriction on $A^{-1}$. Finally, if we assume that there are no proper subfields with
\[
|A|^{4/7} < |G| < |A|^{2},
\]
then it is necessarily the case that restriction \eqref{eqn:SoRICondition} is satisfied for all proper subfields. Since \eqref{eqn:SoRICondition} fails only if there exist some elements $c, d \in \F_q$ and a proper subfield $G$ such that
\[
|A| \geq |A^{-1} \cap (cG +d)| > |G|^{1/2} \geq |A|,
\]
which is impossible. This concludes the proof of the required lower bound on $|1/A + 1/A|$. A similar argument gives the same bound for $|1/A - 1/A|$. To see this note that when applying \eqref{eqn:SumsetToTriples} and \eqref{eqn:AEnergyCS}, one may replace $X+X$ by $X-X$.
\end{proof}
\appendix

\section{Proofs of Lemma~\ref{lem:sum-ratio} and Lemma~\ref{lem:energybound}}
\label{Appendix:SumRatio}
\begin{proof}[Proof of Lemma~\ref{lem:sum-ratio}]
Fix some $\epsilon = \epsilon(\eta)$ satisfying 
\begin{equation}
\label{eqn:EpsEta}
\eta < (1-\epsilon)/8 < 1/8.
\end{equation}
We apply Lemma~\ref{lem:PlRuRefined} to identify some subset $B \subseteq A$ with 
\begin{equation}
\label{eqn:BLB}
|B|\geq (1-\epsilon) |A|,
\end{equation} 
such that 
\begin{equation}
\label{eqn:iteratedsumset}
|B+B+B+B| \ll_{\epsilon} \frac{|A+A|^{3}}{|A|^{2}}.
\end{equation}
We point out that in order to prove the estimates of Lemma~\ref{lem:sum-ratio} involving the difference set $A -A$ instead of the sum set $A+A$, by an alternative use of Lemma~\ref{lem:PlRuRefined}, one can identify some subset $B^{'} \subseteq A$ with $|B^{'}| \geq (1-\epsilon) |A|$ such that
\begin{equation}
\label{eqn:iterateddifferenceset}
|B^{'}-B^{'}-B^{'}-B^{'}| \ll_{\epsilon} \frac{|A-A|^{3}}{|A|^{2}}.
\end{equation}
For $X\subseteq \F_q$ and $\xi \in X/X$, we write $r_{X}(\xi) = |\{ (a, b) \in X^{2} : b/a = \xi\}|.$ Note that
\[
\sum_{\xi \in B/B} r_B(\xi) = |B|^2.
\]
Let $Y\subseteq B/B$ be the set of popular slopes such that for $\xi \in Y$ we have 
\[
r_{B}(\xi) \geq \frac{|B|^2}{2|B/B|}
\]
and let $P = \{ (x, y) \in B \times B : y/x \in Y \}.$ By Lemma~\ref{lem:popularity} with $X = B/B$ and $f = r_{B}$, it follows that 
\[
|P| = \sum_{\xi \in Y}r_{B}(\xi) \geq \frac{|B|^2}{2}.
\]
By the pigeonhole principle, there exists some $x_{*}\in B$, such that the set 
\[
B_{x_*} = \{y : (x_*, y) \in P \}
\]
has cardinality $|B_{x_*}| \geq |B|/2.$ For $\xi \in \F_q$, we write $P_{\xi} = \{x :( x, \xi x)\in P\}.$
Then, for all $y \in B_{x_{*}}$, we have 
\begin{equation}
\label{eqn:ProjectionsSizeandProperty}
|P_{y/x_*}|  = r_B(y/x_*) \geq \frac{|B|^2}{2|B/B|} \quad \text{and} \quad \frac{y}{x_{*}}P_{y/x_{*}} \subseteq B.
\end{equation}
By Lemma~\ref{lem:popularity}, with $X = B_{x_{*}}/B_{x_{*}}$ and $f = r_{B_{x_{*}}}$, there exists $S \subseteq B_{x_{*}} \times B_{x_{*}}$, with 
\[
|S| \geq \frac{|B_{x_{*}}|^2}{2} \geq \frac{|B|^2}{8},
\]
such that if $(x, y) \in S$, then 
\[
 r_{B_{x_{*}}}(y/x) \geq \frac{|B_{x_{*}}|^2}{2|B_{x_{*}}/B_{x_{*}}|} \geq \frac{|B|^2}{8|B/B|}.
\]
By the pigeonhole principle there exists a popular abscissa $x_0$, such that the set $B_{x_0} = \{y: (x_0, y) \in S\}$ has cardinality 
\begin{equation}
\label{B1size}
|B_{x_0}| \geq \frac{|B|}{8}.
\end{equation}
Since the required estimate and the conditions of Lemma~\ref{lem:sum-ratio} are invariant under dilations of the set $A$, we may assume, without loss of generality, that $x_0 = 1$.
Let
\begin{equation}
\label{eqn:SyDef}
S_y = \{x : (x, xy)\in S\}
\end{equation}
and note that for any $y\in B_1$, we have 
\begin{equation}
\label{Sy}
S_y, yS_y \subseteq B_{x_{*}} \subseteq B 
\end{equation}
and that 
\begin{equation}
\label{SyLB}
|S_y| \gg \frac{|B|^2}{|B/B|}.
\end{equation}

Next, we record a useful consequence of Lemma~\ref{lem:ShenCovering}.
\begin{claim}
\label{app:Covering}
Let $n \leq 4$ and for $1\leq i \leq n$, let $b_i \in \pm B_1$ be arbitrary elements. Let
\[
\Gamma := O\bigg(\frac{|A+A||A/A|}{|B|^{2}}\bigg).
\]
Then, for any subset $C \subset B$, a further subset $C_{1} \subset C$ can be identified, with $|C_{1}| \approx |C|$, such that all of $b_{i}C_{1}$ are fully covered by $O(\Gamma)$ translates of $B$.
\end{claim}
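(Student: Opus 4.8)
The plan is to mirror, almost verbatim, the covering argument of Claim~\ref{app:TCovering}, but with the sets $S_{y}$ (for $y\in B_1$) playing the role the intersections $A_i^{(c)}\cap A_i^{*}$ played there. Fix once and for all the constant $\epsilon=\epsilon(\eta)$ already chosen in \eqref{eqn:EpsEta}, so in particular $0<\epsilon<1/16$. For each $1\le i\le n$ write $b_i=\pm y_i$ with $y_i\in B_1$, and recall from \eqref{Sy} and \eqref{SyLB} that $S_{y_i}\subseteq B$, that $y_iS_{y_i}\subseteq B$, and that $|S_{y_i}|\gg |B|^{2}/|B/B|$; note also $|B/B|\le|A/A|$ since $B\subseteq A$.

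For each $i$ I would apply Lemma~\ref{lem:ShenCovering} with $X=b_iC$ and $Y=y_iS_{y_i}$. The key observation is that, whichever sign $b_i$ carries, one of the two sets $b_iC+y_iS_{y_i}$, $b_iC-y_iS_{y_i}$ is a dilate of $C+S_{y_i}$ and the other a dilate of $C-S_{y_i}$; hence $\min\{|b_iC+y_iS_{y_i}|,\,|b_iC-y_iS_{y_i}|\}\le |C+S_{y_i}|\le|B+B|\le|A+A|$, because $C,S_{y_i}\subseteq B$. Thus Lemma~\ref{lem:ShenCovering} produces a family of at most $C(\epsilon)\,|A+A|/|S_{y_i}|\ll |A+A|\,|A/A|/|B|^{2}=O(\Gamma)$ translates of $y_iS_{y_i}$ covering all but at most $\epsilon|C|$ elements of $b_iC$; since $y_iS_{y_i}\subseteq B$, each such translate is contained in a translate of $B$. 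Dividing through by $b_i$, this yields a subset $C^{(i)}\subseteq C$ with $|C^{(i)}|\ge(1-\epsilon)|C|$ such that $b_iC^{(i)}$ is fully covered by $O(\Gamma)$ translates of $B$.

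Finally I would set $C_1=C^{(1)}\cap\cdots\cap C^{(n)}$, so that $|C_1|\ge(1-n\epsilon)|C|\ge(3/4)|C|\approx|C|$ by the choices $\epsilon<1/16$ and $n\le4$; since $b_iC_1\subseteq b_iC^{(i)}$ for every $i$, all the sets $b_iC_1$ are simultaneously covered by $O(\Gamma)$ translates of $B$, which is the assertion of the claim.

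The only substantive place where the argument departs from a literal copy of Claim~\ref{app:TCovering} is the handling of the negative slopes $b_i=-y_i$: there one must cover $-y_iC$ by translates of $B$ rather than of $-B$, and this is exactly why one appeals to the difference-set branch of the minimum in Lemma~\ref{lem:ShenCovering}, and why the inclusions $S_{y_i}\subseteq B$ and $y_iS_{y_i}\subseteq B$, together with the lower bound \eqref{SyLB} on $|S_{y_i}|$ and the trivial $|B/B|\le|A/A|$, are the genuine inputs that make the bound $O(\Gamma)$ come out. Everything else is routine bookkeeping with implied constants depending only on $\eta$.
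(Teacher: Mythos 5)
Your argument is correct and mirrors the paper's own proof: apply Lemma~\ref{lem:ShenCovering} with $Y=y_iS_{y_i}\subseteq B$ for each $i$, use the minimum over the sum and difference branches to handle the sign of $b_i$, and intersect the resulting near-full subsets of $C$. The only slip is the claim that \eqref{eqn:EpsEta} "in particular" forces $\epsilon<1/16$ (it only constrains $0<\epsilon<1-8\eta$), but this is harmless -- one should simply introduce a fresh parameter $\delta\leq 1/16$ when invoking Lemma~\ref{lem:ShenCovering}, as the paper does, rather than reusing $\epsilon$.
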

\begin{proof}
By Lemma~\ref{lem:ShenCovering}, for any $0 < \delta \leq 1/16$ and $1\leq i \leq n$, there exist sets $C_{b_i} \subset C$ with $|C_{b_i}| \geq (1-\delta)|C|$ such that $b_{i}C_{b_i}$ can be covered by
\begin{equation}
\label{eqn:Noftranslates}
O_{\delta}\bigg(\frac{|b_{i}C + b_{i}S_{b_i}|}{|b_{i}S_{b_i}|}\bigg) = O_{\delta}\bigg(\frac{|A+A||A/A|}{|B|^{2}}\bigg)
\end{equation}
translates of $B$. We set $C_{1}= C_{b_1} \cap\dots \cap C_{b_n}$. Then, it follows that $|C_{1}| \geq (1-n\delta)|C| \geq (3/4)|C|$ and  each of $b_i C_{1}$ gets fully covered by $O(\Gamma)$ translates of $B$.
\end{proof}
We remark that by an alternative use of Lemma~\ref{lem:ShenCovering}, one may replace $A+A$ in \eqref{eqn:Noftranslates} by $A-A$. By this observation and~\eqref{eqn:iterateddifferenceset}, the remainder of the proof may be easily reworked to produce the same estimates involving the difference set.

First, we assume $|A| \ll q^{1/2}$ and consider four cases corresponding to the nature of the quotient set $R(B_1)$.
\subsection*{Case 1:}  $R(B_1) \not= R(B_{x_*})$.
Recall that $B_1 \subseteq B_{x_*}$, which implies that $R(B_1) \subseteq R(B_{x_*})$. Therefore, according to the assumption of this case, there exists some element $r\in R(B_{x_*})$ such that $r\not\in R(B_1)$. Since $r\not\in R(B_1)$, given an arbitrary subset $Y\subseteq B_1$, by Lemma~\ref{lem:RBcard}, we have $|Y|^{2} = |Y + rY|.$ Namely, there exist elements $a, b, c, d \in B_{x_*}$, such that
\begin{equation}
\label{eqn:SRC1}
|Y|^2 = | cY -  dY + aY- bY|.
\end{equation}
By Lemma~\ref{lem:ShenCovering} and \eqref{eqn:ProjectionsSizeandProperty}, a positive proportion of $c B_1$ can be covered by at most
\begin{equation*}
\frac{|c B_1 + cP_{c/x_{*}}\big|}{|P_{c/x_{*}}|} \ll \frac{|A + A||A/A|}{|B|^{2}}
\end{equation*}
translates of $c P_{c/x_{*}} \subset x_{*} B$. Similarly, a positive proportion of each of $-d B_1$, $a B_1$ and $-b B_1$ can be covered by $O(\Gamma)$ translates of $x_{*} B$.

Proceeding in a similar manner as Claim~\ref{app:Covering}, an appropriate subset $B^{'}_1 \subset B_1$ of size $|B^{'}_1 | \approx |B_1|$ can be identified, so that $cB^{'}_1$, $-dB^{'}_1$, $aB^{'}_1$ and $-bB^{'}_1$ are each fully covered by $O(\Gamma)$ translates of $x_{*}B$. Hence, by \eqref{eqn:SRC1}, with $Y = B^{'}_1$, we deduce
\begin{equation}
\label{eqn:Case1LongSum}
|B|^{2} \ll \frac{|B + B + B + B||A +A|^{4}|A/A|^{4}}{|B|^{8}}.
\end{equation}
After applying~\eqref{eqn:iteratedsumset}, it follows that
\begin{equation}
\label{eqn:Case1FinalBound}
|A+A|^{7}|A/A|^{4} \gg_{\epsilon} |A|^{12}.
\end{equation}

\subsection*{Case 2:}  $1 + R(B_1) \not\subseteq R(B_1)$.
There exist elements $a, b, c, d \in B_1$, such that
\[
r = 1 + \frac{a - b}{c - d} \not\in R(B_1) = R(B_{x_*}).
\]
Recall the set $S_a$ defined in \eqref{eqn:SyDef}. Let $S^{'}_a \subset S_a$ denote an arbitrary subset with $|S^{'}_a| \approx |S_a|$. Also let $B^{'}_1$ be an arbitrary subset of $B_1$ with $|B^{'}_1| \approx |B_1|$. By Lemma~\ref{lem:PlRuRefined}, with $X = (c-d)B^{'}_1$, there exists $B^{''}_1 \subseteq B^{'}_1$, with $|B^{''}_1| \approx |B^{'}_1| \approx |B|$, such that
\begin{align}
\label{eqn:Case2PlRu}
|B^{''}_1 + rS^{'}_a| &= |(c-d)B^{''}_1 + (c-d)S^{'}_a + (a-b)S^{'}_a| \\
\nonumber &\ll \frac{|B^{'}_1 + S^{'}_a|}{|B^{'}_1|}|(c-d)B^{'}_1 + (a-b)S^{'}_a|.
\end{align}
Recalling that $B^{''}_1, S^{'}_a \subset B_{x_*}$, by Lemma~\ref{lem:RBcard} and \eqref{SyLB}, we have 
\[
|B^{''}_1 + rS^{'}_{a}| =  |B^{''}_1||S^{'}_{a}| \gg \frac{|B|^{3}}{|A/A|}.
\]
Then, by~\eqref{eqn:Case2PlRu}, it follows that
\begin{equation}
\label{eqn:Case2PreCovering}
\frac{|B|^{4}}{|A/A|} \ll |A+A||cB^{'}_1 -dB^{'}_1 + aS^{'}_{a} - bS^{'}_{a}|.
\end{equation}
Now, by Claim~\ref{app:Covering}, we can identify positively proportioned subsets $C_1 \subset B_1$ and $C_2 \subset S_a$ such that each of $cC_1$, $-dC_1$ and $-bC_2$ gets fully covered by $O(\Gamma)$ translates of $B$. We fix $B^{'}_1 = C_1$ and $S^{'}_a = C_2$.  Then by~\eqref{eqn:Case2PreCovering} and a threefold use of Claim~\ref{app:Covering} we have
\[
|B|^{10} \ll |A/A|^{4}|A+A|^{4}|B + B + aS^{'}_a + B|.
\]
Observing that $aS^{'}_a \subset B$, then applying~\eqref{eqn:iteratedsumset}, we conclude that
\[
|A+A|^{7}|A/A|^{4} \gg_{\epsilon} |A|^{12}.
\]

\subsection*{Case 3:}  $B_{1}\cdot R(B_1) \not\subseteq R(B_1)$.
There exist elements $a, b ,c ,d ,e ,f \in B_1$ such that
\[
r = a\frac{c - d}{e - f} \not\in R(B_1 )  = R(B_{x_{*}}).
\]
Given any set $Y_1 \subseteq B_{x_*} $, recalling that $S_a \subset B_{x_*}$, by Lemma~\ref{lem:RBcard}, we have $|Y_1||S_a| = |Y_1 + rS_a|.$ For an arbitrary set $Y_2$, by Lemma~\ref{lem:PlRu} with $X = \frac{c - d}{ e -f }Y_2$, we obtain 
\begin{align*}
|Y_2||Y_1||S_a| &= |Y_2||Y_1 + rS_a| \\ &\leq \Big|Y_1 + \frac{c-d}{e-f}Y_2\Big| |a S_a + Y_2 |\\ &= |eY_1 - fY_1 + cY_2 - dY_2||aS_a + Y_2|.
\end{align*}
By Claim~\ref{app:Covering}, there exist positively proportioned subsets $C_1 \subset S_e$ and $C_2 \subset S_c$ such that $-fC_1$ and $-dC_2$ can be covered by $O(\Gamma)$ translates of $B$. We fix $Y_1 = C_1$ and $Y_2 = C_2$. First, noting that $eS_e$, $aS_a$, $cS_c \subset B$, we have
\[
|Y_1||Y_2||S_a| \ll |B - fY_1 + B - dY_2||A +A|.
\]
Then, the covering argument, together with \eqref{SyLB}, yields
\[
\frac{|B|^{6}}{|A/A|^{3}} \ll \frac{|B+B+B+B||A+A|^{3}|A/A|^{2}}{|B|^{4}}.
\]
Finally, applying~\eqref{eqn:iteratedsumset}, it follows that
\[
|A+A|^{6}|A/A|^{5} \gg_{\epsilon} |A|^{12}.
\]
\subsection*{Case 4:}  Cases 1-3 do not happen.
By Lemma~\ref{lem:QuotientSetSubfield}, it follows that $R(B_1) = \F_{B_1}$. Based on the assumptions of Lemma~\ref{lem:sum-ratio}, we consider two cases.

\subsection*{Case 4.1:} $|A \cap R(B_1)| \ll |R(B_1)|^{1/2}$.
Clearly $B_1 \subseteq \F_{B_1} = R(B_1)$. Hence 
\[
|B_1|^2 =|B_1 \cap R(B_1)|^2\leq |A \cap R(B_1)|^2 \ll |R(B_1)|.
\]
Thus, by Lemma~\ref{lem:pivotting}, there exist elements $a, b, c, d \in B_1$ such that for any subset $Y \subseteq B_1$ with $|Y| \approx |B_1|,$ we have
\[
|B|^2 \approx |Y|^2 \ll |aY -bY +cY -dY|. 
\]
By Claim~\ref{app:Covering}, there exists a subset $B_1^{'}\subset B_1$, with $|B_1^{'}| \approx |B_1|$, such that $aB_1^{'}$, $-bB_1^{'}$, $cB_1^{'}$ and $-dB_1^{'}$ are each fully contained in $O(\Gamma)$ translates of $B$. We set $Y = B^{'}_1$, to obtain
\[
|B|^{2} \ll |B +B +B +B|\frac{|A+A|^{4}|A/A|^{4}}{|B|^{8}}.
\]
Then by \eqref{eqn:iteratedsumset} we have
\[
|A+A|^{7}|A/A|^{4}\gg_{\epsilon} |A|^{12}.
\]

\subsection*{Case 4.2:} $R(B_1)$ is a proper subfield and $|A \cap R(B_1)| \leq \eta|A|$ for some fixed $0<\eta < 1/8$.
In particular, we have 
\[
|B_1| =|B_1 \cap R(B_1)| \leq |A \cap R(B_1)| \leq \eta|A|.
\]
On the other hand, by \eqref{eqn:EpsEta}, \eqref{eqn:BLB} and \eqref{B1size} we have $|B_1| > \eta |A|$. Hence this case is impossible.

Finally, suppose that $|A| \geq \eta^{-1}q^{1/2}$. Then, by \eqref{eqn:EpsEta}, \eqref{eqn:BLB} and \eqref{B1size} we have $|B_1| > q^{1/2}$. By Lemma~\ref{lem:RBFq}, it follows that $R(B_1) = \F_q$. Let $Y$ denote an arbitrary subset of $B_1$ with $|Y| \approx |B_1|.$ By Lemma \ref{lem:pivotting}, there exists an element $\xi \in\F_q^{*}$ such that $q\ll |Y + \xi Y|.$ Since $R(B_1) = \F_q$, there exist elements $a, b, c, d \in B_1$ such that 
\[
q \ll |aY -bY +cY -dY|.
\]
By Claim~\ref{app:Covering}, there exists a positively proportioned subset $B_1^{'}\subset B_1$, such that $aB_1^{'}$, $-bB_1^{'}$, $cB_1^{'}$ and $-dB_1^{'}$ are each fully covered by $O(\Gamma)$ translates of $B$. We set $Y = B^{'}_1$, to obtain
\[
q \ll |B +B +B +B|\frac{|A+A|^{4}|A/A|^{4}}{|B|^{8}}.
\]
By~\eqref{eqn:iteratedsumset}, we conclude
\[
|A+A|^{7}|A/A|^{4}\gg_{\epsilon} q|A|^{10}.
\]
\end{proof}

\begin{proof}[Sketch of proof of Lemma~\ref{lem:energybound}]

Following the proof of \cite[Theorem~1.4]{RoLi}, there exist integers $L$ and $N$, with $N < |A|$, $LN < |A|^{2}$ and such that
\begin{equation}
\label{eqn:MBound}
M:= LN^{2} \gg \frac{E_{\times}(A)}{\log|A|}.
\end{equation}
In particular, it follows
\[
L, N > \frac{M}{|A|^{2}}.
\]
By \cite[Lemma~3.1]{RoLi}, there exists a set $\tilde{A}_{x_0} \subset A$ with
\begin{equation}
\label{eqn:tildeAx0Size}
|\tilde{A}_{x_0}| \gg \frac{LM}{|A|^{3}}> \frac{M^{2}}{|A|^{5}}.
\end{equation}
Then, based on the nature of the quotient set $R(\tilde{A}_{x_0})$, five cases are considered.
Let 
\[
K=\min\bigg\{\frac{|A+A|}{|A|}, \frac{|A-A|}{|A|}\bigg\}.
\]
Case 1 and Case 2 lead to the estimate
\begin{equation}
\label{eqn:RoLiCase1Estimate}
M^{4} \ll K^{7}|A|^{11},
\end{equation}
in Case~3 we get
\begin{equation}
\label{eqn:Case3EnergyBound}
M^{3} \ll K|A|^{8}
\end{equation}
 and Case~4 yields
\begin{equation}
\label{eqn:Case4EnergyBound}
M^{5} \ll K^{6}|A|^{14}.
\end{equation}

In Case~5, it follows that $R(\tilde{A}_{x_0}) = \F_{\tilde{A}_{x_0}}$. Here we proceed slightly differently from the proof of~\cite[Theorem 1.4]{RoLi} and split Case~5 into three cases.
 
{\bf Case 5.1:} $R(\tilde{A}_{x_0}) = \F_q$ and $|\tilde{A}_{x_0}| > q^{1/2}$. Then, Lemma~\ref{lem:pivotting} can be used in conjunction with~\cite[Application~3.2]{RoLi} to obtain the estimate
\begin{equation}
\label{eqn:Case5.1EnergyBound}
M^{4} \ll q^{-1}K^{7}|A|^{13}.
\end{equation}
Furthermore, if $|\tilde{A}_{x_0}| > q^{1/2}$ by Lemma~\ref{lem:RBFq} it follows that $R(\tilde{A}_{x_0}) = \F_q$. Then, under this assumption, Cases 2-4 become impossible and Case~5.1 becomes the only possibility for Case~5.

{\bf Case 5.2:}   Either $R(\tilde{A}_{x_0}) = \F_q$ and $|\tilde{A}_{x_0}| \leq q^{1/2}$ or $R(\tilde{A}_{x_0})$ is a proper subfield and $|A \cap R(\tilde{A}_{x_0})| \ll |R(\tilde{A}_{x_0})|^{1/2}$. This case is dealt with in Case~5 of the proof of~\cite[Theorem 1.4]{RoLi}, where estimate~\eqref{eqn:RoLiCase1Estimate} is recovered.

{\bf Case 5.3:} $R(\tilde{A}_{x_0})$ is a proper subfield and $|A \cap R(\tilde{A}_{x_0})| \ll |A|^{1-\delta}$ for some fixed $\delta >0$. Then, by \eqref{eqn:tildeAx0Size}, we have
\[
\frac{M^{2}}{|A|^{5}} < |\tilde{A}_{x_0}| = |\tilde{A}_{x_0}\cap R(\tilde{A}_{x_0})| \leq |A\cap R(\tilde{A}_{x_0})| \ll |A|^{1-\delta}.
\]
This gives
\begin{equation}
\label{eqn:Case5.3EnergyBound}
M \ll |A|^{3-\delta/2}.
\end{equation}

Finally, putting together \eqref{eqn:MBound}, \eqref{eqn:RoLiCase1Estimate}, \eqref{eqn:Case4EnergyBound}, \eqref{eqn:Case5.1EnergyBound} and \eqref{eqn:Case5.3EnergyBound}, we obtain the required bound on $E_{\times}(A)$. Clearly~\eqref{eqn:Case3EnergyBound} is a stronger bound than required.
\end{proof}

\section*{Acknowledgments}
The author would like to thank Igor Shparlinski for his helpful comments on the manuscript.

\end{document}